\documentclass[12pt,leqeq]{article}
\usepackage{amssymb,amsthm,mathrsfs}
\usepackage[utf8]{inputenc}
\usepackage{youngtab}
\usepackage{amsmath}
\usepackage{amscd}
\usepackage{xy}
\xyoption{all}
\usepackage[dvips]{graphicx}
\usepackage{graphpap}
\usepackage{youngtab}
\usepackage{pstricks}
\usepackage{pst-all}
\usepackage{pstcol}
\usepackage{color}
\usepackage[T1]{fontenc}
\usepackage{tgbonum}
\usepackage{tikz}
\usepackage{tikz-cd}
\usetikzlibrary{decorations.pathreplacing}

\usepackage{stmaryrd}
\usepackage{ytableau}
\newtheorem{thm}{Theorem}
\newtheorem{cor}[thm]{Corollary}
\newtheorem{prop}[thm]{Proposition} 
\newtheorem{rem}[thm]{Remark}

\newtheorem{exmp}[thm]{Example}

\date{}

\begin{document}
\setlength{\baselineskip}{16pt}
\title{Combinatorial Approach to the Second Law}
\author{Rafael D\'\i az}
 \maketitle
\begin{abstract}
We study the second law in the context of combinatorial processes, focusing on the mechanisms that give rise to irreversible behavior from an underlying deterministic, invertible, and reversible dynamics.
\end{abstract}

\section{Introduction}

A key feature of the second law is that it inherently operates on two scales—a microscale and a macroscale. Intuitively, the microscale corresponds to a complete description of the system's state (i.e., points in phase space), while the macroscale is the scale accessible to the observer, both operationally and informationally. This duality finds its geometric expression in the introduction of a distinct space, beyond phase space, known as state space, whose points label the equilibrium states of a system.  As discussed below, the second law characterizes possible transitions between equilibria in terms of entropy changes, while presupposing the underlying process of equilibration 
\cite{bu}.\\

Boltzmann, most famously via his $H$-theorem \cite{g0, g}, introduced an extension of the second law that applies to the equilibration process itself, i.e. to transitions between non-equilibrium states\footnote[1]{
In a different usage, the term 'non-equilibrium states' often refers to  non-equilibrium steady states \cite{derri}.} in isolated systems 
(i.e. not interacting with the outside.) This was made possible by introducing a \textbf{macrostate space} whose points label
both equilibrium and non-equilibrium states available at macroscale, and by extending the notion of entropy as counting microstates compatible with a given macrostate.\\

Our aim in this work is to study entropy and the out of equilibrium second law, following the Boltzmann approach, within a combinatorial setting \cite{dz}. 
We specifically focus on the mechanisms that gives rise to irreversible process starting from an deterministic reversible process. The connection between the second law and combinatorics dates back to Boltzmann. In his investigation of equilibrium macrostates, in a system of indistinguishable particles distributed among discrete energy levels, he was led to study the asymptotic behavior of multinomial coefficients using Stirling's approximation. Niven in \cite{ni, ni2} provides a general picture of the use of combinatorial methods along this line. The main advantage of the combinatorial approach is that  rigorous definitions and computations, that may be elusive in the continuous case, become available allowing hypothesis and conjectures to be probed. A natural way to construct combinatorial systems is going through a double process of discretization,
as in Examples \ref{epdt}, \ref{exg}, \ref{qth1}, i.e. discretization at micro and macro scales. This process typically results in stochastic dynamics, but here we focus on deterministic invertible dynamics.  However, from the combinatorial viewpoint, it is natural to postpone taking this limit to the very end, as it has been successfully done in other areas, for example, in $h$-calculus and $q$-calculus \cite{k}. 
\\

Although the combinatorial definitions used in this work are completely self-contained, and require no prior background in physics, for the reader benefit, we briefly explain where these definitions come from, and what are the intended applications. For in depths introductions to this topic, the reader may consult \cite{abn, g, j0, j1}. In thermodynamics entropy $\ S_T \ $ is a function on state space whose points parameterize \textbf{equilibrium states} of the system. The \textbf{second law} states that when an isolated system in equilibrium  $\ X\ $ has a constraint removed  (e.g. a wall is removed,) causing  the system to evolve (in phase space) to a new equilibrium $\ Y, \ $ then entropy increases, i.e. $\ S_T(X)\leq S_T(Y). \ $  The second law only demands that
entropy be defined at the beginning and at the end of this process. 
There are three different definitions of entropy $\ S_T, \ $ which turn out to be equivalent, in the sense that in the examples where this quantities have be computed, they give the same result (in the thermodynamic limit, up to a constant). Jaynes  \cite{j} is a main reference on this topic, however, there seems to be a lack of general formal rigorous statements and proofs of this equivalence in the literature. \\

\textbf{Clausius entropy} $\ S_C \ $ is defined via the differential equation on state space
 \begin{equation}\label{ce}
   dS_C\ = \ \frac{dQ}{T},
 \end{equation}
  where $\ dQ \ $ is the heat received, and $\ T \ $ is the temperature  of the system.  From this equation  $\ S_C(X)\ $ can be obtained, up to a constant, by integration over paths in state space, i.e. integration over reversible paths. The second law follows the inequality\footnote[1]{Here the system receives heat $\ dQ \ $ from a reservoir at temperature $\ T. \ $ Note that the temperature of the system may not be defined as it step away from equilibrium. Clausius  justified  this inequality  
  from Carnot's principle.} 
   \begin{equation}\label{ci2}
   \oint  \frac{dQ}{T} \ \leq \ 0, 
 \end{equation}
  which is an identity for a reversible process, and strict for irrevesible processes.
  \\

\textbf{Gibbs entropy} is based on an explicitly description of equilibrium states as probability distributions on phase space. It is given by 
 \begin{equation}\label{ge}
    S_G(X)\ = \ kH(p_X),
 \end{equation}
where $\ k \ $ is the Boltzmann constant, and $\ H(p_X) \ $ is the Shannon entropy  (\ref{pent}) of the \textbf{equilibrium} distribution $\ p_X,\ $
i.e. $\ p_X\ $ is  the distribution of maximum
Shannon entropy $\ H(p) \ $ among all probability distributions
$\ p \ $ on phase space compatible with the state point $\ X. \ $ The connection between Gibbs and Clausius entropy comes from the fact that, in the intended examples,  $\ p_X \ $ is given explicitly by the Maxwell-Boltzmann distribution, so  $\ S_G(X) \ $ can be computed and it is shown that it satisfies Clausius differential equation (\ref{ce}). The second law states that when a system in equilibrium  $\ X \ $ has a constrain removed, and the system evolve to a new equilibrium $\ Y,\ $ then $\ H(p_X) \leq H(p_Y).\ $ Assuming that evolution preserves Shannon entropy, which  follows from Liouville's theorem, and that $\ p_X\ $  is transported to a probability distribution $\ p'_X\ $ compatible with $\ Y, \ $ then
$\ \ H(p_X) =  H(p'_X)  \leq H(p_Y),\ \ $
because $\ H(p_Y)\ $ is maximum among all distribution compatible with $\ Y.$
\\

\textbf{Boltzmann entropy} is based on a more intuitive description of equilibrium states which are now understood as subsets of phase space.  It
is given by
 \begin{equation}\label{be}
   S_B(X)\ = \ k\mathrm{ln}W \ = \ k\mathrm{ln}\mathrm{vol}(a_X),
 \end{equation}
where $\ k \ $ is the Boltzmann constant, $\ a_X \ $ is a subset of phase space, and $\ \mathrm{ln}\mathrm{vol}(a_X) \ $ is the logarithmic volume of  $\ a_X. \ $ Intuitively, $\ a_X \ $ is the set of phase space points compatible with the system being at state $\ X. \ $ 
The bridge between Boltzmann and Gibbs entropy is the asymptotic equipartition theorem, which tell us, when applicable, that the distribution $\ p_X \ $ is overwhelmingly concentrated 
on a subset $\ a_X\ $ of phase space such that $\ p_X\ $ is, roughly speaking, uniform on $\ a_X,\ $ thus
$\ \   S_G(X)\ = \ kH(p_X)\  \approx  \ k\mathrm{ln}\mathrm{vol}(a_X) \ = \ S_B(X).\ \ $ The second law states that when a 
system in equilibrium $\ X \ $ has constrain removed, and the system evolves
to a new equilibrium $\ Y,\ $ then  $\ \mathrm{vol}(a_X) \leq \mathrm{vol}(a_Y).\ $ If volume is preserved, then $\ \mathrm{vol}(a_X) \leq \mathrm{vol}(a_Y) \ $  since the reproducible evolution maps points in $\ a_X\ $  to points in $\ a_Y.\ $  \\

This work is organized as follows. In Section \ref{mms} we define micro-macro systems, introduce examples, methods of constructions, and study the classes of isomorphisms of such systems. In Section \ref{ifr} we study how reproducibility gives rise to irreversibility. In Section \ref{icg8} we introduce coarse graining and study how it leads to irreversibility. 
In Section \ref{nmsp} we introduced the periodic non-Markovian stochastic process
suitable to address the second law within our combinatorial settings. In Section \ref{ioa} we study entropy production on average. In the final Section \ref{ifa}
we show how irreversibility arises from  attraction.

\section{Micro-Macro Systems}\label{mms}

An invertible combinatorial micro-macro dynamical system  $\ (X,A,m,\alpha) \ $ is given by a finite set of microstates $\ X,\ $ a finite set of macrostates $\ A,\ $ an invertible map $\ \alpha: X \to X, \ $ and a micro to macro surjective map  $\ m:X \to A.\ $ If  $\ m \ $ is  an arbitrary  map,  we can always replace $\ A\ $ by $\ m(X)\ $ to ensure surjectivity. We write $\ i\in a\ $ instead of $\ i\in m^{-1}(a),\ $ i.e. we think of a macrostate 
as a subset of $\ X \ $ whenever convenient.  For $\ a,b \in A\ $ set 
$$ \llbracket a,b\rrbracket \ = \ \{i\in a\ | \ \alpha(i) \in b \} \ \subseteq \ a.$$
Sets $ \ A \ $ and $\ X\ $ come with several additional structures.
Measure $\ |\ |:A \to \mathbb{N}\ $ given by $\ |a|  = |m^{-1}(a)|; \ $ measure $\ |\ |:X \to \mathbb{N}\ $ given by $\ |i|=|m(i)|; \ $ 
probability measure $\ p \ $ on $\ A \ $ given by $\ \displaystyle p_a   =  \frac{|a|}{|X|};\ $ stochastic map $\ [\alpha]: A \to  A \ $ such that $\ [\alpha]_{ab} \in [0,1] \ $ is the probability that the macrostate $\ a \ $ moves to a macrostate $\ b \ $ in a unit of time:
\begin{equation}
[\alpha]_{ab}  \ = \ \mathrm{Pr}[\ \alpha(i) \in b \ | \ i\in a\ ] \ = \ \frac{ |\llbracket a,b\rrbracket |}{|a|}
 \ = \ \frac{ |\alpha (a) \cap b |}{|a|}
 \ = \ \frac{ |a \cap \alpha^{-1}(b)  |}{|a|}.
\label{T}
\end{equation} 
Note that $\  [\alpha]_{ab}\ $ is well-defined for $\ a \ $ and $\ b \ $ arbitrary non-empty subsets of $\ X. \ $  The Boltzmann entropy on macrostates $\ S:A \rightarrow \mathbb{R}_{\geq 0}\ $ is given by $\ S(a) =   \mathrm{ln}|a|. \ $\footnote[1]{We set Boltzmann constant to $\ 1.$}  Again $\ S \ $ is well-defined for arbitrary non-empty subsets of $\  X, \ $ in particular $\ S(X)=\mathrm{ln}(X). \ $ The Boltzmann entropy  on microstates $\ S:X \rightarrow \mathbb{R} \ $ is given by $\ S(i)   =   \mathrm{ln}|m(i)|.\ $ 
The mean Boltzmann entropy $\ S: \mathrm{Prob}(A) \to \mathbb{R} \ $ is given 
on a probability measure $\ q \in \mathrm{Prob}(A)\ $  by
\begin{equation}\label{mse}
S(q) \ = \ \langle S \rangle_q \ = \  \sum_{a\in A}q_aS(a). 
\end{equation}
Shannon entropy $\ H: \mathrm{Prob}(A) \to \mathbb{R} \ $ is
given on $\ q \in \mathrm{Prob}(A)\ $ by
\begin{equation}\label{pent}
H(q)\ = \ -\sum_{a\in A} q_a\mathrm{ln}(q_a).
\end{equation}
For $\ \displaystyle p_a= \frac{|a|}{|X|},\ $ Shannon entropy $\ H(p)\ $ and Boltzmann entropy $\ S(p) \ $ play complementary roles as  $\ H(p)+S(p)=\mathrm{ln}|X|.\ $ In general, Shannon entropy $ \ H(q)\  $ measures the mean
uncertainty in choosing  a macrostate, while Boltzmann entropy $\ S(q) \ $ measures the mean uncertainty in choosing a microstate given that a macrostate has already been chosen. Thus, $\ H(q)   +   S(q)\ $ measures the uncertainty in choosing a microstate according to the coarse-grained probability measure $\ cq \in \mathrm{Prob}(X)\ $ given on $\ i\in X\ $ by 
\begin{equation}\label{cgrain}
 (cq)_i   \  = \ \frac{q_{mi}}{|i|}  \ = \ \frac{q_a}{|a|} \ \ \ \ \mbox{for} \ \ \ \ i \in a.  
\end{equation}
Indeed Shannon entropy $\ H(cq)\ $ is given by
\begin{equation}\label{cq}
 H(cq)   \  = \ -\sum_{a\in A}\sum_{i\in a}
 \frac{q_a}{|a|} \mathrm{ln}\big(\frac{q_a}{|a|}\big) 
 \ = \   H(q)  \ + \  S(q).
\end{equation}

The sets of microstates where Boltzmann entropy
is decreasing, constant, increasing are  given by
$\ D    =   \{i \in X \  |  \ S(\alpha i) < S(i) \}, \ $
$\ C  =   \{ i \in X \  | \  S(\alpha i ) = S(i)  \},\ $
$\ I   =   \{i \in X \  | \  S(\alpha i) > S(i)\}, \ $ respectively. \
The \textbf{decreasing} \ (constant , increasing)\  \textbf{entropy ratio} is given by $\  \displaystyle \frac{|D|}{|X|} \ \  \mbox{(}  
\frac{|C|}{|X|} \ ,  \  \frac{|I|}{|X|}.$)\ \  We have that
\begin{equation}\label{dic=1}
\frac{|D|}{|X|} \ + \ \frac{|C|}{|X|}  \ + \ \frac{|I|}{|X|} \ = \ 1.
\end{equation}

A system $ \ (X,A,m,\alpha) \ $ has an $\varepsilon$-\textbf{dominant equilibrium} if 
\begin{equation}\label{edo}
  \frac{|X^{\mathrm{eq}}|}{|X|}\ \geq \ 1- \varepsilon, 
  \mbox{\ \ \ or \ equivalently \ \ }  \frac{|X^{\mathrm{neq}}|}{|X|} \ \leq  \ \varepsilon,
\end{equation}
where $\ X^{\mathrm{eq}}\ $ is the set of microstates that belong to a macrostate of maximum entropy. If there is a unique $\varepsilon$-dominant macrostate $\ a, \ $  we have that
\begin{equation}\label{edo2}
  \frac{|a|}{|X|}\ \geq \ 1- \varepsilon, 
  \mbox{\ \ \ \ thus \ \ \ \ }  
  \mathrm{ln}(1 -\varepsilon)+\mathrm{ln}|X| \ \leq \ S(a) \ \leq 
  \ \mathrm{ln}|X|.
\end{equation}

\begin{prop}\label{yqt}
{\em Let $\ (X,A,m, \alpha)\ $ be a system with an
$\ \varepsilon$-dominant equilibrium.  Then
it has $\ \varepsilon$-bounded decreasing entropy ratio, $\ \varepsilon$-bounded increasing entropy ratio, and over $\ 1-2\varepsilon\ $ 
constant entropy ratio. 
}
\end{prop}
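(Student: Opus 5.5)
The plan is to compare the sets $D$ and $I$ with the non-equilibrium set $X^{\mathrm{neq}}$ and its preimage under $\alpha$. We only use two facts: $\alpha$ is a bijection, and every microstate in $X^{\mathrm{eq}}$ has the maximal possible value of $S$. Write $S_{\max}$ for the maximum of $S$ on $A$, so $X^{\mathrm{eq}}=\{i\in X \ | \ S(i)=S_{\max}\}$.

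First, the increasing set. If $i\in I$, then $S(i)<S(\alpha i)\leq S_{\max}$, so $i\notin X^{\mathrm{eq}}$. Hence $I\subseteq X^{\mathrm{neq}}$, and (\ref{edo}) gives $\frac{|I|}{|X|}\leq \frac{|X^{\mathrm{neq}}|}{|X|}\leq\varepsilon$.

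Next, the decreasing set. If $i\in D$, then $S(\alpha i)<S(i)\leq S_{\max}$, so $\alpha(i)\in X^{\mathrm{neq}}$. Hence $D\subseteq \alpha^{-1}(X^{\mathrm{neq}})$. Since $\alpha$ is a bijection of the finite set $X$, $|\alpha^{-1}(X^{\mathrm{neq}})|=|X^{\mathrm{neq}}|$. Therefore $\frac{|D|}{|X|}\leq\varepsilon$.

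Finally, the constant set. By (\ref{dic=1}), $\frac{|C|}{|X|}=1-\frac{|D|}{|X|}-\frac{|I|}{|X|}\geq 1-2\varepsilon$. One could also argue directly: $X^{\mathrm{eq}}\cap\alpha^{-1}(X^{\mathrm{eq}})\subseteq C$, and this intersection has at least $|X|-2|X^{\mathrm{neq}}|$ elements. Either route gives the same bound.

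There is no real obstacle here. The only point needing care is the bound on $D$: it relies on invertibility of $\alpha$, which guarantees that the preimage of $X^{\mathrm{neq}}$ has the same size as $X^{\mathrm{neq}}$. Without invertibility this step would fail, since many microstates could map into a small non-equilibrium set.
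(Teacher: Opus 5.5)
Your proof is correct and takes essentially the same route as the paper. Like the paper, you show $I\subseteq X^{\mathrm{neq}}$ and $\alpha D\subseteq X^{\mathrm{neq}}$ (which you phrase as $D\subseteq\alpha^{-1}(X^{\mathrm{neq}})$), use bijectivity of $\alpha$ to bound $|D|$, and get the bound on $C$ from identity (\ref{dic=1}); your direct argument via $X^{\mathrm{eq}}\cap\alpha^{-1}(X^{\mathrm{eq}})\subseteq C$ is a valid extra that the paper does not include.
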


\begin{proof}[\textbf{Proof}]Follows from $\ \alpha D \subseteq X^{\mathrm{neq}}, \ \ \ I \subseteq X^{\mathrm{neq}}, \ \ $ identity (\ref{dic=1}), \ and the inequalities 
  \begin{equation*}
  \frac{|D|}{|X|}  \ = \  \frac{|\alpha D|}{|X|}
  \ \leq \ \frac{|X^{\mathrm{neq}}|}{|X|} \ \leq  \ \varepsilon 
  \ \ \ \ \ \mbox{and} \ \ \ \ \
  \frac{|I|}{|X|} 
  \ \leq \ \frac{|X^{\mathrm{neq}}|}{|X|} \ \leq  \ \varepsilon.
\end{equation*}
\end{proof}

There are several interesting constructions for building new systems out of given systems  $ \ (X,A,m,\alpha), \ (Y,B, t,\beta).\ $ \\

\noindent 1) \textbf{Disjoint Union}  $ \ (X,A,m,\alpha) \sqcup  (Y,B, t,\beta)\ = \ (\ X \sqcup Y,\ A \sqcup B,\ m \sqcup t,\ \alpha \sqcup \beta\ ).\ $ 
We have that $\ S(p_{A \sqcup B})\ = \ \frac{|X|}{|X|+|Y|}S(p_A) \ + \ \frac{|Y|}{|X|+|Y|}S(p_B).$ \\

\noindent 2) \textbf{Product} $ \ (X,A,m,\alpha)\times (Y,B, t,\beta) \ = \ 
(\ X\times Y, \ A \times B,\ m \times t, \ \alpha \times \beta \ ). \ $ We have that $\ S(p_{A \times B}) \ = \ S(p_{A}) \ + \ S(p_{B}).$\\

\noindent 3) \textbf{Inverse System} $ \ (X,A,m,\alpha^{-1}). \ $ 
We have $\ \  D_{\alpha^{-1}}=\alpha I, \ $ $\ C_{\alpha^{-1}} =\alpha C,\ \ $
$ I_{\alpha^{-1}} = \alpha  D. \ \ $  Therefore
$\ \  |D_{\alpha^{-1}}|=|I|,\ \ \  
|C_{\alpha^{-1}}|=| C|,\ \ \ |I_{\alpha^{-1}}|=|D|.$ \\

\noindent 4) \textbf{Coarsening} $ \ (X,C,cm,\alpha), \ $ where
$\ c:A \to C \ $ is a surjective map. We have that
$\ S(p_C) \geq S(p_A) .\ $ If  $\ a \in A\ $ is 
$\ \varepsilon$-dominant equilibrium, the $\ ca \in C\ $
is also an $\ \varepsilon$-dominant equilibrium.  \\

\noindent 5) \textbf{Reunion} $ \ (X,A,m,\alpha) \cup   (Y,B, t,\beta) \ = \ 
(\ X \sqcup Y,\ A \cup B,\  m \cup t,\ \alpha \sqcup \beta\ ).\ $ 
Reunion can be obtain by applying first disjoint union, and
then coarsening with the canonical map $\ c: A \sqcup B \to A \cup B. \ $ 
Thus 
$\ S(p_{A \cup B}) \ \geq \ \frac{|X|}{|X|+|Y|}S(p_A) \ + \ \frac{|Y|}{|X|+|Y|}S(p_B). $ \\

\noindent 6) \textbf{Restriction} $ \ (U,\ mU, \ m,\ \alpha_U ) \ $ of
$\ (X,A,m,\alpha),\ $ where
$\ U \subseteq X \ $ and $\ \alpha_U \ $ is such that  $\ \alpha_U$-cycles
are the restriction of $\ \alpha$-cycles to $\ U.\ $ If $\ a\in mU, \ $
then $$\ S_U(a) \ = \ \mathrm{ln}|a\cap U| \ \leq \ \mathrm{ln}|a| \
= \ S(a). $$ 

\noindent 7) \textbf{Extensive Joint} 
\ For $\ A,  B \ \subseteq \ \mathbb{R}\ $  set
$$ (X,A,m,\alpha) \vee  (Y,B, t,\beta) \ = \ 
(\ X\times Y,\ A + B, \ m\pi_1 + t\pi_2,\ \alpha \times \beta \ ).$$
We have  $\ \ \displaystyle p_{A+B}(c) \ = \ \sum_{a+b=c}p_A(a)p_B(b) \ \ $
where $\ \ a\in A, \ b\in B.$ \\

\noindent 8) \textbf{Iteration} $ \ (X,m^{l}X,m^l,\alpha) \ \ $ where 
$\ \ m^l(i)\ = \ ( mi,\ m\alpha i,\ \dots, \ m\alpha^{l-1}i ) \ \in\ A^{l}.\ \ $ Note that $\ \ (\pi_1,\dots, \pi_k)m^l=m^k \ \ $ for $\ \ l\geq k, \ \ $ thus
$\ \ S(p_{m^{k}X})\geq S(p_{m^{l}X}).$\\

\noindent 9) \textbf{Zones} $ \ (X,Z,|m|,\alpha) \ $ where
$\ Z = \{ |a| \ | \ a \in A\}\ $ and $\ |m|(a)=|m(a)|. \ $ 
We have that $\ \ S(p_Z) \geq S(p_A).$\\

For $\ n\in \mathbb{N}\ $ we use the notation
$\ [n]=\{1,...,n\}.$

\begin{exmp}\label{epdt}
{\em Consider system $ \ (X, A, m, \alpha). \ $  Let
$\ \{P_c\}_{c\in C} \ $ be a finite semi-open partition\footnote[1]{A subset $\ P \ $ of a topological space is \textbf{semi-open} if 
$\ \mathrm{cl}(P)  = \mathrm{cl}(\mathrm{Int}(P)),\ $ i.e. it's closure is equal to the closure of it's interior.   A partition is semi-open if its blocks  are semi-open.} of the space $\ \mathrm{Prob}(A)\subseteq \mathbb{R}^{A},\ $ and let 
$\ \pi:\mathrm{Prob}(A) \to C \ $ be the canonical map. For 
$\ N \geq 1 \ $ the micro-macro empirical map $\ m_N: X^N \to \mathrm{Prob}(A)\ $ sends
$\ i\in X^{N} \ $ to the probability distribution
$\ m_N(i) \in \mathrm{Prob}(A) \ $ given by
$$m_N(i) \ = \ E_Nm^{\times N}(i)\ = \ \frac{1}{N}\sum_{s=1}^{N}\delta_{m(i_s)} \ , \ \ \ 
\ \mbox{that is} \ \ \ \ m_N(i)_a \ = \ \frac{|\{s\in [N] \ | \ m(i_s) =a \}|}{N}.$$
We obtain the micro-macro dynamical system
$\ (X^N,  C, \pi m_N, \alpha^{\times N}) .$
}
\end{exmp}

\begin{exmp}\label{exg}
{\em Let finite group $\ G \ $ act on a finite set $\ X, \ $ and let $\ V\subseteq G \ $ be invariant under inversion $\ V^{-1}=V. \ $ 
We have bijection $\ \alpha: X\times V \to X\times V \ $ given by $\ \alpha(i,v)=(iv,v), \ $ with inverse map $\ \alpha^{-1}(i,v)=(iv^{-1},v). \ $
Let  $\ \{P_c\}_{c\in C} \ $ be a finite semi-open partition of $\ \mathrm{Prob}(X\times V)\ $ and  $\ \pi:\mathrm{Prob}(X\times V) \to C \ $ be the canonical map.
For $\ N\geq 1\ $ let $\  E_N : (X\times V )^N \to \mathrm{Prob}(X\times V) \ $ be the empirical map,   we get system 
$\ ((X\times V )^N,  C,  \pi E_N, \alpha^{\times N}  ).  \ \  $
Similarly, let $\ \{P_c\}_{c\in C} \ $ be a finite semi-open partition of $\ \mathrm{Prob}(X),\ $
we get system $\ ( (X\times V )^N,  C,  \pi E^1_N, \alpha^{\times N} ), \ $
where  $\ \pi E^1_N \ $ is the composition map
$\  (X\times V )^N \to X^N \to \mathrm{Prob}(X) \to C . \ $ }
\end{exmp}

For $\ n\in \mathbb{N}_{\geq 2}\ $ we set 
$\ \mathbb{Z}_{n}= \mathbb{Z}/n\mathbb{Z}. $

\begin{exmp}\label{qth1}
{\em Following Example \ref{exg}, let 
$\ X=G=\mathbb{Z}_{2d}\ $ and $\ V=\mathbb{Z}_{2d}^{\ast}=\mathbb{Z}_{2d}\setminus \{0\}.\ $ 
Consider the map $\ c:\mathbb{Z}_{2d} \to \mathbb{Z}_{2}\ $
sending $\ \{0,\dots,d-1 \}\ $ to $\ 0, \ $ and
$\ \{n,\dots,2d-1 \}\ $ to $\ 1. \ $ 
Fix a finite semi-open partition $\ \{P_c\}_{c\in C} \ $ of $\ \mathrm{Prob}(\mathbb{Z}_{2}). \ $ 
Let $\ t_N \ $ be the map given by the composition
$$  (\mathbb{Z}_{2d}\times \mathbb{Z}_{2d}^{\ast} )^N \to \mathbb{Z}_{2d}^N \to
\mathbb{Z}_{2}^N \to \mathrm{Prob}(\mathbb{Z}_{2}) \to C  $$
where the second map is $\ c^{\times N} , \ $ the third is the empirical distribution,  and the fourth is the canonical map.
We get system $$ ( (\mathbb{Z}_{2d}\times \mathbb{Z}_{2d}^{\ast} )^N, 
C,  t_N, \alpha^{\times N} ). \ $$
}
\end{exmp}

A family $\ (X_n, A, m_n, \alpha_n)\ $ of systems has  \textbf{large deviations with rate function} $\ I:A \to \mathbb{R}_{\geq 0}\ $ if setting 
$\ a_n = m_n^{-1}a \ $ we have that 
\begin{equation}\label{ld}
\lim_{n \to \infty} \frac{1}{n}\mathrm{ln}[p_n(a)]
\ = \ 
\lim_{n \to \infty} \frac{1}{n}\mathrm{ln}\bigg(\frac{|a_n|}{|X_n|}\bigg) 
\ = \ -I(a) \ \ \ \ \mbox{for} \ \ \ a\in A.
\end{equation}

\

\begin{thm}\label{yqt}
{\em Fix $\ \varepsilon >0. \ $ Let systems $\ (X_n, A, m_n, \alpha_n)\ $ have  large deviations with injective rate function $\ I:A \to \mathbb{R}_{\geq 0}.$

\noindent a) $(X_n, A, m_n, \alpha_n)\ $ has a $\ \varepsilon$-dominant equilibrium for $\ n \ $ large enough. 

\noindent b) Set $\ Da_n=\{i\in a_n \ | \ S(\alpha_n i) < S(i)\}, \ $ then
$\ \ \displaystyle \frac{|Da_n|}{|a_n|} \ \leq \ \varepsilon \ \ $ for $\ \ n \ \ $ large enough.

}
\end{thm}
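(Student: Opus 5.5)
The plan is to extract from the large deviation hypothesis (\ref{ld}) an eventual strict ordering of the sizes $\ |a_n|\ $ by the values of $\ I,\ $ and then bound everything by finite sums of exponentially small ratios. Since $\ A\ $ is finite and $\ I\ $ is injective, the number
$$\delta \ = \ \min\{\,|I(a)-I(b)| \ : \ a\neq b \in A\,\} \ > \ 0$$
is well defined. By (\ref{ld}), for every $\ \eta>0\ $ there is $\ n_0\ $ such that for all $\ n\geq n_0\ $ and all $\ a\in A$,
$$e^{-n(I(a)+\eta)} \ \leq \ p_n(a) \ \leq \ e^{-n(I(a)-\eta)} .$$
In particular $\ p_n(a)>0,\ $ so $\ a_n\neq\emptyset\ $ and $\ S(a_n)\ $ is defined. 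Taking $\ \eta<\delta/2\ $ gives the key fact: for $\ n\ $ large, $\ |a_n|<|b_n|\ $ if and only if $\ I(a)>I(b)$. Moreover, whenever $\ I(b)>I(a)$,
$$\frac{|b_n|}{|a_n|} \ = \ \frac{p_n(b)}{p_n(a)} \ \leq \ e^{-n(\delta-2\eta)} \ \longrightarrow \ 0 .$$

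\textbf{Part a).} Since $\ \sum_{a}p_n(a)=1\ $ and $\ A\ $ is finite, some $\ a\ $ must satisfy $\ p_n(a)\geq 1/|A|\ $ for infinitely many $\ n$. Hence $\ \min I=0$. By injectivity this minimum is attained at a unique $\ a^\ast$. Every other $\ b\ $ has $\ I(b)\geq\delta$, so $\ p_n(b)\to 0\ $ exponentially, and therefore $\ p_n(a^\ast)=1-\sum_{b\neq a^\ast}p_n(b)\to 1$. By the ordering fact, for $\ n\ $ large $\ a^\ast_n\ $ is the unique macrostate of maximal entropy. Thus $\ X_n^{\mathrm{eq}}=a^\ast_n\ $ and $\ |X_n^{\mathrm{eq}}|/|X_n|=p_n(a^\ast)\geq 1-\varepsilon\ $ eventually.

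\textbf{Part b).} Fix $\ a\in A$. If $\ i\in Da_n$, then $\ \alpha_n i\ $ lies in some $\ b_n\ $ with $\ |b_n|<|a_n|$. For $\ n\ $ large this means $\ I(b)>I(a)$. Since $\ \alpha_n\ $ is injective,
$$|Da_n| \ = \ |\alpha_n(Da_n)| \ \leq \ \sum_{I(b)>I(a)} |b_n| .$$
Dividing by $\ |a_n|\ $ and using the exponential ratio bound, we get $\ |Da_n|/|a_n|\leq |A|\,e^{-n(\delta-2\eta)}$. This is $\ \leq\varepsilon\ $ for $\ n\ $ large, uniformly over the finitely many $\ a$. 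For $\ a=a^\ast\ $ this is the same argument as in Proposition \ref{yqt}, but now normalized by $\ |a_n|\ $ rather than $\ |X_n|$.

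The only delicate step is the one isolated at the start: the uniformity in (\ref{ld}) needed to make the comparison $\ |a_n|<|b_n|\iff I(a)>I(b)\ $ hold simultaneously for all pairs. This is resolved by the finiteness of $\ A\ $ together with the positive gap $\ \delta\ $ coming from injectivity of $\ I$.
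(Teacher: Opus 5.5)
Your proof is correct and follows essentially the same route as the paper: the injectivity gap turns (\ref{ld}) into exponentially small ratios $|b_n|/|a_n|$ for $I(b)>I(a)$. Part a) then follows from the dominance of the unique minimizer of $I$, and part b) from $|Da_n|\leq\sum_{I(b)>I(a)}|b_n|$. Your version is slightly more careful than the paper's, since you justify two steps it leaves implicit: the first inequality in part b), via the eventual strict ordering of the $|a_n|$ and injectivity of $\alpha_n$, and the identification $X_n^{\mathrm{eq}}=a^\ast_n$. Your detour through $\min I=0$ is harmless but unnecessary, because the paper obtains $|a^{\mathrm{eq}}_n|/|X_n|\to 1$ directly from the ratio identity $|a^{\mathrm{eq}}_n|/|X_n| = 1/\bigl(1+\sum_{a\neq a^{\mathrm{eq}}}|a_n|/|a^{\mathrm{eq}}_n|\bigr)$.
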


\begin{proof}[\textbf{Proof}] Fix $\ \delta > 0 \ $ small enough such that
$\ I(b)>I(a)\ $ implies $\ I(b)>I(a)+2\delta \ $ for 
$\ a,b \in A. \ $
Let $\ I(b)>I(a)\ $ and $\ n\to \infty ,\ $  then by (\ref{ld}) we have that
\begin{equation}\label{ycsde}
\frac{|b_n|}{|a_n|}  \ = \  
\frac{\frac{|b_n|}{|X_n|}}{\frac{|a_n|}{|X_n|}} \ = \ 
e^{n\big( \frac{1}{n}\mathrm{ln}(\frac{|b_n|}{|X_n|} - 
\frac{1}{n}\mathrm{ln}(\frac{|a_n|}{|X_n|}  \big)}
\ \leq \ e^{n\big(-I(b) +I(a) + 2\delta  \big)}
\ \to \ 0
\end{equation}

\noindent a) $I\ $ is injective, so it achieves its minimum at a unique macrostate
$\ a^{\mathrm{eq}}.\ $ Then
$$\frac{|a_n^{\mathrm{eq}}|}{|X_n|} \ = \  \frac{1}{\sum_{a\in A}\frac{|a_n|}{|a_n^{\mathrm{eq}}|}}
\ = \  \frac{1}{1+\sum_{a\neq a^{\mathrm{eq}} }\frac{|a_n|}{|a_n^{eq}|}} \ \to \ 1 \ \ \ \ \mbox{as} \ \ \ \ n \to \infty \ \ \  \mbox{by} \ \ \  (\ref{ycsde}).$$

\noindent b) As $\ n \to \infty\ $  by (\ref{ycsde}) we have that
\begin{equation}\label{y22}
\frac{|Da_n|}{|a_n|} \ \leq \  
\frac{\sum_{I(b)>I(a)}|b_n|}{|a_n|} \ = \ 
\sum_{I(b)>I(a)}\frac{|b_n|}{|a_n|}
 \ \to \ 0
\end{equation}
\end{proof}

Recall that for a finite set $\ C\ $ the Kullback-Leibler divergency 
\begin{equation}\label{KL}
\ D(\ \| \ ):\mathrm{Prob}(C) \times \mathrm{Prob}(C) \rightarrow [0,\infty] 
\ \ \ \ \mbox{is given by} \ \ \ \ D(p \| q )=\sum_{c\in C} p_c\mathrm{ln}\big(\frac{p_c}{q_c}\big).
\end{equation}

\begin{thm}\label{ct3}
{\em Fix small $\ \varepsilon > 0, \ $
and $\ k\geq 1.\ $ Systems $\ ( X^N,  [k], \pi m_N,\alpha^{\times N} ) \ $ 
constructed as in Example \ref{epdt} 
   have large deviations with injective rate function $$\ I(l)=(l-1)\varepsilon, \ \ \ \ \ \mbox{ where }$$
   $$  P_l\ = \ \{q\in \mathrm{Prob}(A) \ | \  
   (l-1)\varepsilon \leq D(q\| p)< l\varepsilon\}\ \ \
    \mbox{for} \ \ \ 1\leq l \leq k-1, $$ 
    $$\ P_k \ = \ \{q\in \mathrm{Prob}(A) \ | \  
   (k-1)\varepsilon \leq D(q\| p)\}. $$
}
\end{thm}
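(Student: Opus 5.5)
The plan is to reduce the statement to Sanov's theorem for the empirical measure of i.i.d. samples. First, I would observe that under the uniform measure on $\ X^N\ $ the coordinates $\ i_1,\dots,i_N\ $ are i.i.d. uniform on $\ X,\ $ so $\ m(i_1),\dots,m(i_N)\ $ are i.i.d. with law $\ p_a=|a|/|X|\ $ on $\ A.\ $ Hence, for the macrostate $\ l\in[k],\ $
$$\frac{|l_N|}{|X^N|}\ =\ \mathrm{Pr}\big[\, m_N(i)\in P_l\,\big],$$
where $\ m_N(i)\ $ is exactly the empirical distribution of an i.i.d. $\ p$-sample of size $\ N.\ $ Since $\ m\ $ is surjective, $\ p_a>0\ $ for all $\ a,\ $ so $\ D(\cdot\|p)\ $ is finite, continuous and strictly convex on the whole simplex $\ \mathrm{Prob}(A).\ $ Sanov's theorem (the finite-alphabet version, provable by the method of types) then gives
$$-\inf_{q\in \mathrm{Int}P}D(q\|p)\ \leq\ \liminf \tfrac1N\mathrm{ln}\,\mathrm{Pr}[m_N\in P]\ \leq\ \limsup \tfrac1N\mathrm{ln}\,\mathrm{Pr}[m_N\in P]\ \leq\ -\inf_{q\in \mathrm{cl}P}D(q\|p),$$
with interiors and closures taken in the simplex.

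The second step is to show that for each $\ P_l\ $ both infima equal $\ (l-1)\varepsilon.\ $ This gives the limit (\ref{ld}) with $\ I(l)=(l-1)\varepsilon,\ $ which is evidently injective. Here the smallness of $\ \varepsilon\ $ enters: I would require $\ (k-1)\varepsilon < \max_q D(q\|p) = -\mathrm{ln}\min_a p_a,\ $ so that every $\ P_l\ $ is nonempty. Continuity of $\ D\ $ gives $\ \mathrm{cl}P_l\subseteq\{(l-1)\varepsilon\leq D\leq l\varepsilon\},\ $ so the closure infimum is at least $\ (l-1)\varepsilon.\ $ For $\ l=1,\ $ the point $\ q=p\ $ lies in the interior, since $\ \{D<\varepsilon\}\ $ is open, and it gives infimum $\ 0.\ $

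For $\ l\geq 2,\ $ the main obstacle is the interior infimum, which also proves the semi-openness required in Example \ref{epdt}. I would first pick $\ q_0\ $ in the relative interior of the simplex with $\ D(q_0\|p)=(l-1)\varepsilon.\ $ This is possible by the intermediate value theorem along a segment from $\ p\ $ towards an interior point near a vertex of minimal $\ p_a,\ $ since the value at that vertex exceeds $\ (k-1)\varepsilon.\ $ Next, I would set $\ q_t=p+t(q_0-p).\ $ The function $\ t\mapsto D(q_t\|p)\ $ is strictly convex, vanishes at $\ t=0,\ $ and is positive at $\ t=1,\ $ so it is strictly increasing for $\ t\geq 1.\ $ Hence for $\ t\ $ slightly larger than $\ 1\ $ the point $\ q_t\ $ stays in the open simplex and satisfies $\ (l-1)\varepsilon<D(q_t\|p)<l\varepsilon\ $ (or only the lower inequality when $\ l=k$). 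By continuity $\ q_t\in\mathrm{Int}P_l,\ $ and letting $\ t\downarrow1\ $ shows that the interior infimum equals $\ (l-1)\varepsilon.\ $

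The same perturbation argument shows that every point of $\ P_l\ $ is a limit of interior points: points with $\ D\ $ strictly between the thresholds are already interior, and points on the relative boundary of the simplex can be pushed slightly towards $\ p.\ $ This establishes that the partition is semi-open. Combining the two steps, the upper and lower Sanov bounds coincide, which yields the large deviation property with the stated injective rate function.
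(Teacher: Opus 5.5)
Your route is the paper's: you rewrite $|(\pi m_N)^{-1}(l)|/|X^N|$ as $p^{\times N}\big(E_N\in P_l\big)$, invoke Sanov, and check that the infima of $D(\cdot\|p)$ over $\mathrm{int}P_l$ and over $\mathrm{cl}P_l$ both equal $(l-1)\varepsilon$. Your handling of that last point is correct and more careful than the paper's. The paper only cites continuity of $D(\cdot\|p)$ to get $\mathrm{cl}(\mathrm{int}P_l)=\{(l-1)\varepsilon\leq D\leq l\varepsilon\}$, and continuity alone does not give this. Your ray argument with $q_t=p+t(q_0-p)$, $t\downarrow 1$, together with the explicit requirement $(k-1)\varepsilon<-\ln\min_a p_a$ for nonemptiness, does give matching Sanov bounds.

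The step that fails is your final paragraph on semi-openness. Take a point $q$ on the relative boundary of the simplex with $D(q\|p)=(l-1)\varepsilon$. Pushing it towards $p$ lowers $D$ and lands in $P_{l-1}$, so it does not produce interior points of $P_l$.

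Under your smallness condition the claim can in fact be false. Each vertex $\delta_a$ is a strict local maximum of $D(\cdot\|p)$ on the simplex: for $q=(1-s)\delta_a+sr$ with $r_a=0$ one gets $D(q\|p)=-\ln p_a+s\ln s+O(s)$ uniformly in $r$. So if $-\ln p_a=(l-1)\varepsilon$ for some $2\leq l\leq k$, then $\delta_a\in P_l$ but $\delta_a\notin\mathrm{cl}(\mathrm{int}P_l)$. This happens, e.g., for $p=(\tfrac12,\tfrac18,\tfrac38)$, $\varepsilon=\ln 2$, $k=3$, which satisfies $(k-1)\varepsilon<-\ln\min_a p_a$.

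To get a semi-open partition, as Example \ref{epdt} requires, take $\varepsilon$ small in the stronger sense $(k-1)\varepsilon<-\ln\max_a p_a$. Then no point at a threshold is a vertex. Any non-vertex $q$ can be written as $\tfrac12(x+y)$ with $x\neq y$ in the simplex, both arbitrarily close to $q$. Strict convexity then puts one of $x,y$ in $\{D>(l-1)\varepsilon\}$, and by continuity also below $l\varepsilon$, hence in $\mathrm{int}P_l$.

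This repair is only needed for admissibility of the partition. The large deviation limit with rate $I(l)=(l-1)\varepsilon$ already follows from your interior sequence, without semi-openness.
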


\begin{proof} First note that
\begin{equation}\label{noma}
\frac{|(\pi m_N)^{-1}(l)|}{|X^N|} \ \ = \ \ 
\sum_{(i_1,\dots,i_N)\in (m^{\times N})^{-1}E_N^{-1}\pi^{-1}(l) }\frac{1}{|X|^N}
\ \ =   
\end{equation}
$$\sum_{(a_1,\dots,a_N)\in E_N^{-1}P_l }
\frac{|a_1|\cdots |a_N|}{|X|^N} \ \ = \ \ 
\sum_{E_N(a_1,\dots,a_N)\in P_l }
p_{a_1} \cdots p_{a_N}\ = \ p^{\times N}\big(a\in A^{N} \ \big| \ E_N(a) \in P_l \big).$$

Next we apply Sanov's theorem \cite{dz, e}:

\begin{equation}\label{noma2}
\lim_{n \to \infty} \frac{1}{n}\mathrm{ln}\frac{|(\pi m_N)^{-1}(l)|}{|X^N|} \ \ = \ \ 
\lim_{n \to \infty} \frac{1}{n}\mathrm{ln} \ p^{\times N}\big(a\in A^{N} \ \big| \ E_N(a) \in P_l \big)
\ \ =   
\end{equation}
$$-\inf_{q\in P_l}D(q||p) \ = \ -(l-1)\varepsilon \ = -I(l).$$
We have used that the blocks $\ P_l\ $ are semi-open:
$\ p \ $ is strictly positive, so $\ D(q||p) \ $ is a bounded continuous function on $\ q\in \mathrm{Prob}(A), \ $ and therefore  for $\ 1\leq l \leq k-1\ $ we have that
$$\mathrm{cl}(\mathrm{int}(P_l)) \ = \ \mathrm{cl}(\{q\in \mathrm{Prob}(A) \ | \  
   (l-1)\varepsilon < D(q\| p)< l\varepsilon\}) \ = $$
   $$ \{q\in \mathrm{Prob}(A) \ | \  
   (l-1)\varepsilon \leq D(q\| p) \leq l\varepsilon\} \ = \
   \mathrm{cl}(P_l). $$
Similarly $\ \mathrm{cl}(\mathrm{int}(P_k)) =  
   \mathrm{cl}(P_k). $

\end{proof}

\begin{cor}\label{ct79}
{\em Fix small $\ \varepsilon > 0 \ $
 and $\ k\geq 1.\ $ \\

\noindent a) Systems $ \ ((X\times V )^N,  [k], \pi E_N,\alpha^{\times N} )\  $ 
constructed as in Example \ref{exg} 
   have large deviations with injective rate function $\ I(l)=(l-1)\varepsilon, \ \ $
   where  $$\ \ P_l\ = \ \{q\in \mathrm{Prob}(X\times V) \ | \  
   (l-1)\varepsilon \leq D(q\| u)< l\varepsilon\}\ \ 
    \mbox{for} \ \  1\leq l \leq k-1, \ \  $$  
    $$\ \  P_k \ =\ \{q\in \mathrm{Prob}(X\times V) \ | \  
   (k-1)\varepsilon \leq D(q\| u)\}.$$

\noindent b) Systems $\ ((X\times V )^N, [k],  \pi E^1_N,\alpha^{\times N} ) \ $ 
constructed as in Example \ref{exg} 
   have large deviations with injective rate function $\ I(l)=(l-1)\varepsilon, \ \ $
   where  $$\ \ P_l \ = \ \{q\in \mathrm{Prob}(X) \ | \  
   (l-1)\varepsilon \leq D(q\| u) < l\varepsilon\}\ \ 
    \mbox{for} \ \  1\leq l \leq k-1, \ \  $$  
    $$\ \  P_k \ = \ \{q\in \mathrm{Prob}(X) \ | \  
   (k-1)\varepsilon \leq D(q\| u)\}.$$

\noindent c) Systems 
  $\  ((\mathbb{Z}_{2n}\times \mathbb{Z}_{2n}^{\ast} )^N, [k],  t_N, \alpha^{\times N}) \ $ constructed as in Example \ref{qth1} 
   have large deviations with injective rate function $\ I(l)=(l-1)\varepsilon, \ \ $
   where  $$\ \ P_l \ = \ \{q\in \mathrm{Prob}(\mathbb{Z}_2) \ | \  
   (l-1)\varepsilon \leq D(q\| u) < l\varepsilon\}\ \ 
    \mbox{for} \ \  1\leq l \leq k-1, \ \  $$  
    $$\ \  P_k \ = \ \{q\in \mathrm{Prob}(\mathbb{Z}_2) \ | \  
   (k-1)\varepsilon \leq D(q\| u)\}.$$
}
\end{cor}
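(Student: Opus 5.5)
The plan is to reduce each of the three parts to Theorem \ref{ct3}, by recognizing each system as a special case of the construction in Example \ref{epdt} for a suitable base system $(Y,B,m,\beta)$. Then the Sanov computation in the proof of Theorem \ref{ct3} applies verbatim. The key point in each case is to identify the measure $p_b=|b|/|Y|$ on macrostates $B$ with the uniform distribution $u$ appearing in the statement.

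For part a), I take the base system $(X\times V,\ X\times V,\ \mathrm{id},\ \alpha)$ with $\alpha(i,v)=(iv,v)$. Every macrostate is then a singleton, so $p_{(i,v)}=1/|X\times V|$, that is $p=u$. The empirical map $m_N$ of Example \ref{epdt} coincides with $E_N$. Hence $((X\times V)^N,[k],\pi E_N,\alpha^{\times N})$ is exactly the system of Theorem \ref{ct3} with $D(q\|p)=D(q\|u)$, and the rate function $I(l)=(l-1)\varepsilon$ follows.

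For part b), I take the base system $(X\times V,\ X,\ \pi_1,\ \alpha)$, where $\pi_1$ is the projection. Then $|\pi_1^{-1}(x)|=|V|$ for every $x$, so $p_x=|V|/(|X||V|)=1/|X|$, the uniform distribution on $X$. The composite $(X\times V)^N\to X^N\to\mathrm{Prob}(X)$ is $E_N\pi_1^{\times N}=m_N$, so Theorem \ref{ct3} applies again. Part c) works the same way, with base system $(\mathbb{Z}_{2d}\times\mathbb{Z}_{2d}^{\ast},\ \mathbb{Z}_2,\ c\pi_1,\ \alpha)$. The fibres of $c$ are $\{0,\dots,d-1\}$ and $\{d,\dots,2d-1\}$, each of size $d$, so each macrostate has $d(2d-1)$ elements and $p=u=(1/2,1/2)$.

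The remaining checks are the hypotheses used in Theorem \ref{ct3}. First, $p$ must be strictly positive, which holds since $u$ has full support, so $D(\cdot\|u)$ is bounded and continuous on the simplex. Second, the blocks $P_l$ must be semi-open, and the same closure argument works. Third, for the rate function to be injective, each level set band must be nonempty. Here I expect the main (mild) obstacle: for $l\le k$ one needs $(l-1)\varepsilon<\sup_q D(q\|u)=\ln|C|$, where $C$ is the relevant base set ($\ln 2$ in part c). This is presumably what ``small $\varepsilon$'' in the statement ensures, namely $(k-1)\varepsilon<\ln|C|$. Under that condition the infimum of $D(q\|u)$ over $P_l$ equals $(l-1)\varepsilon$, attained by continuity along a segment from $u$ to a vertex. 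So $I$ is injective and the corollary follows.
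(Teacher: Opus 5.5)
Your proposal is correct and follows the paper's own argument. The paper realizes the three families via Example \ref{epdt} from the base systems $(X\times V,X\times V,1,\alpha)$, $(X\times V,X,\pi_1,\alpha)$ and $(\mathbb{Z}_{2n}\times\mathbb{Z}_{2n}^{\ast},\mathbb{Z}_2,c\pi_1,\alpha)$, notes that $p$ is uniform in each case, and applies Theorem \ref{ct3}.

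Your extra condition $(k-1)\varepsilon<\ln|C|$ is a useful explicit reading of ``small $\varepsilon$''. It is needed so that each $P_l$ is nonempty and the Sanov limit equals $-(l-1)\varepsilon$; injectivity of $l\mapsto(l-1)\varepsilon$ holds automatically and does not depend on it.
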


\begin{proof}
The displayed systems arise from the constructions of Example \ref{epdt} and
Theorem \ref{ct3} as follows:
$$\mbox{Systems}  \ \ ((X\times V )^N,  [k], \pi E_N,\alpha^{N} )\ \ \mbox{arise \ from} \ \ (X\times V, X\times V, 1, \alpha).  $$
$$\mbox{Systems}  \ \ ((X\times V )^N, [k],  \pi E^1_N,\alpha) \ \  \mbox{arise \ from} \ \ (X\times V, X, \pi_1, \alpha).  $$
$$\mbox{Systems}  \ \ ((\mathbb{Z}_{2n}\times \mathbb{Z}_{2n}^{\ast} )^N, [k],  t_N, \alpha^{\times N})\ \ \mbox{arise \ from} \ \ (\mathbb{Z}_{2n}\times \mathbb{Z}_{2n}^{\ast}, \mathbb{Z}_{2},  c\pi_1, \alpha). $$
In the three cases the distribution $\ p \ $ on macrostates is uniform.
\end{proof}

Note that there are  $\ n!k!S(n,k) \ $ micro-macro dynamical systems with $\ n \ $ microstates and  $\ k\ $ macrostates, where $\ S(n,k) \ $ are the Stirling numbers of the second kind. Indeed $\ n!\ $ counts bijections  $\ [n]\to [n], \ $  and $\ k!S(n,k)\ $ counts surjective maps $\ [n]\twoheadrightarrow [k]. \ $ \\

Two systems $\ (X_1,A_1,m_1,\alpha_1) \ $ and $\ (X_2,A_2,m_2,\alpha_2) \ $ are isomorphic, and thus considered as essentially the same, if there is a bijection $\ \beta: X_1 \to X_2 \ $ that transports both dynamics and macrostates, i.e. 
$\ \alpha_2= \beta \alpha_1\beta^{-1} \ $ and $\ \beta \ $ induces a (necessarily unique) map $\ \widehat{\beta}: A_1 \to A_2 \ $ such that $\ \widehat{\beta}f_1 = f_2\beta. \ $ System $\ (X,A,m,\alpha) \ $
is isomorphic to system $\ (X,\pi,c,\alpha) \ $ via the identity map $\ \beta=1, \ $ where $\pi \in \mathrm{Par}(X)$ is the partition of $\ X \ $
given by  $\ \pi=\{m^{-1}(a)\ | \ a\in A\} \ $  and $\ c:X \to \pi \ $ is the canonical map. The map $\ \widehat{\beta}: A \to \pi \ $
is given by $\ \widehat{\beta}(a) = m^{-1}(a). \ $
Two systems $\ (X,\pi_1,c,\alpha_1) \ $ and 
$\ (X,\pi_2,c,\alpha_2) \ $ are isomorphic if there is a permutation
$\ \beta: X \to X \ $ such that  $\ \alpha_2= \beta \alpha_1\beta^{-1} \ \  $ and $\ \pi_2=\beta\pi_1. \ $

\begin{exmp}\label{e1o}
{\em Let $\ d:\mathbb{N}_{\geq 1} \to \mathbb{N}_{\geq 1} \  $ 
be the map given by
$$ d_n  =   \underset{\pi \in \mathrm{Par}[n], \ \alpha \in \mathrm{S}_n}{\mathrm{max}}\  |D([n],\pi, \alpha )|, \  $$
i.e. $\ d_n \ $ measures the size of the largest possible decreasing entropy set
for a system with $\ n \ $ microstates. 
Let  $\ l=(l_1, \dots, l_n)\ $ be a numerical partition of $\ n, \ $ i.e. $\ l_k \geq 0,\ $ and $\sum_{k=1}^{n} kl_k =n. \ $ Thus
$\ l \ $ represents partitions of $\ [n] \ $ with $l_k$ blocks of
size $\ k. \ $ Consider system $\ ([n],l, \alpha) \ $ defined diagrammatically; \
below we display the diagram for $\ n=25,\ $ and partition with $\ l_1=l_2=l_4=l_6=1, \ $
$l_3=4,$ and  $ \ l_i=0\ $ otherwise. Blocks are numbered 
from $\ 1 \ $ to $\ 7, \ $ and $\ \alpha \ $ is given by the cyclic order that starts at the top of first column and follow the arrows.
The arrow  $\ \nearrow \ $ means moving to the top of the next column.  \\

$\ytableausetup{centertableaux, boxsize=1cm}
\begin{ytableau}
8 \downarrow   & 8\downarrow  & 8\downarrow  & 8\downarrow  & 8 \downarrow  & 
8 \downarrow  \\
7 \downarrow  & 7\downarrow  & 7\downarrow   &  7\downarrow  \\
3 \downarrow  & 3\downarrow  & 3\nearrow  & 4\nearrow  & 4\nearrow  &
4 \rightarrow  & 5 \rightarrow  & 5 \rightarrow  & 5 \rightarrow 
& 6 \rightarrow  & 6 \rightarrow  & 6 \nearrow\\
2 \downarrow  & 2 \nearrow   \\
1 \nearrow 
\end{ytableau}$

\

\

Permutation $\ \alpha\ $ has the largest possible entropy decreasing set,
for this partition, with $\ |D|\ = \ 13 \  = \ 25 - 12\ = \ n- 3l_3.\ $ Indeed \cite{dz}, this construction always results in a permutation of maximum
decreasing entropy set for $\ l, \ $  thus 
$$ \underset{\alpha \in \mathrm{S}_n}{\mathrm{max}}\ |D([n],l, \alpha )| \ =  \  n -  \underset{1 \leq k \leq n}{\mathrm{max}}\ kl_k, \ \ \ \ \ \ \ 
\mbox{and}$$
$$\ \displaystyle d_n   =
  n \ - \ \underset{l \ \vdash \ n}{\mathrm{min}}\
\underset{1 \leq k \leq n}{\mathrm{max}}\ kl_k, \ \ $$ where $\ l \ $ runs over the numerical partitions of $\ n. \ $ 
}
\end{exmp}

\begin{thm}\label{tnk}
{\em  The number of isomorphism classes of micro-macro dynamical systems with $\ n\geq 1 \ $ microstates is given by
\begin{equation}\label{nmds}
\sum_{k=1}^{n}\sum_{ l,\pi }
 \prod_{c\in \pi}\sigma(c) 
\end{equation}
 $$\mbox{where}\ \ \ l\in \mathrm{par}_k(n), \ \ \ \pi \in \mathrm{Par}[k],
\ \ \ g_c=\mathrm{gcd}\{l_i|i\in c\} \ \ \ \mbox{and} 
\ \ \   \sigma(c) =\sum_{d|g_c}d^{|c|-1}.$$
}
\end{thm}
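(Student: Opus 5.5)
The plan is to reduce the count to orbits of a single permutation's centralizer, and then decompose along the cycles of that permutation. By the remarks preceding Example \ref{e1o}, every system is isomorphic to one of the form $\ ([n],\pi,c,\alpha)\ $ with $\ \pi\in\mathrm{Par}[n]\ $ and $\ \alpha\in \mathrm{S}_n. \ $ Two such systems are isomorphic exactly when the pairs $\ (\pi,\alpha)\ $ lie in the same orbit of $\ \mathrm{S}_n\ $ acting by $\ \beta\cdot(\pi,\alpha)=(\beta\pi,\beta\alpha\beta^{-1}).\ $ So we must count the orbits of $\ \mathrm{S}_n\ $ on $\ \mathrm{Par}[n]\times \mathrm{S}_n.$

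\textbf{Step 1 (fix the dynamics).} Project onto the second coordinate. Orbits of $\ \mathrm{S}_n\ $ on $\ \mathrm{S}_n\ $ are conjugacy classes, indexed by the cycle type of $\ \alpha$. I read $\ l\in \mathrm{par}_k(n)\ $ as a list $\ (l_1,\dots,l_k)\ $ of cycle lengths summing to $\ n$, with $\ k\ $ the number of cycles; the outer sum over $\ k\ $ and $\ l\ $ runs over these classes. For a fixed representative $\ \alpha\ $ of type $\ l$, the orbits of $\ (\pi,\alpha)\ $ with that second coordinate correspond bijectively to the orbits of the centralizer $\ Z(\alpha)\ $ acting on $\ \mathrm{Par}[n]$.

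\textbf{Step 2 (decompose the macrostate partition).} To a partition $\ \pi\ $ of $\ [n]\ $ attach a partition of the set $\ [k]\ $ of cycles of $\ \alpha$. It is the finest partition such that two cycles lie in the same part whenever some macrostate block meets both of them, i.e. the connected components of the "cycle--block incidence" graph. Call it the \emph{support partition}; this is the $\ \pi\in\mathrm{Par}[k]\ $ appearing in (\ref{nmds}). The centralizer $\ Z(\alpha)\ $ is generated by rotations within each cycle together with permutations of equal-length cycles. Rotations preserve the support partition; permutations of equal-length cycles move it. I will argue that it is harmless to sum over all set partitions of $\ [k]\ $ with each cycle labelled by its length. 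The data on distinct components then vary independently, so the orbit count factors as the product $\ \prod_{c}\sigma(c)\ $ over components. A careful check is needed that the labelling conventions in $\ \mathrm{par}_k(n)\ $ and $\ \mathrm{Par}[k]\ $ absorb the permutations of equal-length cycles exactly, so that no orbit is overcounted or undercounted.

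\textbf{Step 3 (one connected component).} For a component $\ c\ $ with cycle lengths $\ l_i$, $\ i\in c$, we must count, modulo independent rotations of the cycles, the admissible ways of joining these cycles into a single connected piece of macrostate structure. The intended mechanism is that the joining is recorded by relative phase shifts between cycles, and only shifts compatible with a common period $\ d\ $ dividing every $\ l_i$, hence dividing $\ g_c$, survive the rotation quotient. For each period $\ d\mid g_c$, fix the phase of one cycle and choose the remaining $\ |c|-1\ $ phases freely modulo $\ d$; this gives $\ d^{|c|-1}\ $ classes, and summing over $\ d\mid g_c\ $ gives $\ \sigma(c)$. I expect to prove this by a Burnside or Möbius-type argument on the group $\ \prod_{i\in c}\mathbb{Z}_{l_i}$, separating stabilizers according to the period $\ d$.

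This third step is the main obstacle: the precise identification of the connected-component data with phase data modulo $\ d\ $ is where the formula actually comes from. I would first verify it by hand for $\ |c|=1\ $ (where $\ \sigma=\tau(l_i)\ $ should count the rotation-invariant structures on a single cycle) and for two cycles of small lengths. I would then generalize by induction on $\ |c|$, adding one cycle at a time and tracking how $\ \mathrm{gcd}\ $ evolves.
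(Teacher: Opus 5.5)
\textbf{There is a genuine gap in Steps 2--3.} Your Step 1 is correct. The isomorphism classes are the $\mathrm{S}_n$-orbits on $\mathrm{Par}[n]\times\mathrm{S}_n$, i.e.\ pairs consisting of a conjugacy class of $\alpha$ and a $Z(\alpha)$-orbit on $\mathrm{Par}[n]$.

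Steps 2--3 then try to show that, for $\alpha$ of type $l$, the number of $Z(\alpha)$-orbits on $\mathrm{Par}[n]$ equals the $l$-term $\sum_{\pi\in\mathrm{Par}[k]}\prod_{c\in\pi}\sigma(c)$ of (\ref{nmds}). This is false. Take $n=3$:
for $\alpha=1$, $Z(\alpha)=\mathrm{S}_3$ has $3$ orbits on $\mathrm{Par}[3]$, but the $l=(1,1,1)$ term is $5$;
for a transposition there are $4$ orbits, against the term $2\cdot 1+1=3$;
for a $3$-cycle there are $3$ orbits, against $\sigma=\tau(3)=2$.
Only the totals agree ($3+4+3=10=5+3+2$).

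Both of your sub-claims fail. Summing over labelled $\pi\in\mathrm{Par}[k]$ does not absorb permutations of equal-length cycles: for $l=(1,1,1)$ each component carries exactly one connected structure, so your scheme returns $5$, not $3$. Also, connected partitions of a single $s$-cycle modulo rotation are not $\tau(s)$ in number: there are three for $s=3$ and seven for $s=4$, against $\tau(3)=2$ and $\tau(4)=3$. So no Burnside or M\"obius argument on $\prod_{i\in c}\mathbb{Z}_{l_i}$ can produce $\sigma(c)$ as an orbit count. Your proposed hand checks would already expose this.

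\textbf{The missing idea.} The $l$-term actually counts $|\mathrm{Par}[n]^{\alpha}|$, the number of $\alpha$-invariant partitions. You need Burnside applied to the whole action: the number of orbits is $\frac{1}{n!}\sum_{\alpha\in\mathrm{S}_n}|Z(\alpha)|\,|\mathrm{Par}[n]^{\alpha}|=\sum_{l}|\mathrm{Par}[n]^{\alpha_l}|$, because $|Z(\alpha)|=n!/c(l)$. You can reach this from Step 1 by applying Burnside to each $Z(\alpha)$ and exchanging the sum over commuting pairs. However, the class you then sum over is that of the fixing element, not of the dynamics, so the terms do not match class by class.

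For invariant partitions, your phase mechanism from Step 3 is exactly right and needs no rotation quotient:
$\alpha$ permutes blocks, and the union of an $\alpha$-orbit of blocks $a_1\to\cdots\to a_d$ is the union of a set $c$ of $\alpha$-cycles. This gives the $\pi\in\mathrm{Par}[k]$, and for invariant partitions it coincides with your support partition.
Each cycle in $c$ runs through $a_1,\dots,a_d$ cyclically, so $d\mid g_c$.
The partition is determined by the phases mod $d$ of all cycles of $c$ except the one of smallest index. This gives $d^{|c|-1}$ choices for each $d$, hence $\sigma(c)$.

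Your own sanity check for $|c|=1$ (``rotation-invariant structures on a single cycle'') is already counting invariant partitions, not rotation orbits. That is where the plan is internally inconsistent.
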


\begin{proof}[\textbf{Proof}]
We want to compute the cardinality of the quotient set
$\ (S_n\times \mathrm{Par}[n])/S_n \ $ where $\ S_n \ $ acts on itself by conjugation, and on partitions $\mathrm{Par}[n]$ by direct image. 
Burnside lemma implies that
\begin{equation}\label{b1}
\big|(S_n\times \mathrm{Par}[n])/S_n\big|  \ = \  
\frac{1}{n!}\sum_{\alpha\in S_n}\big|(S_n\times \mathrm{Par}[n])^{\alpha}\big|
\ = \ \frac{1}{n!}\sum_{\alpha\in S_n}|S_n^{\alpha}|| \mathrm{Par}[n]^{\alpha}|. 
\end{equation}
The number $\ |S_n^{\alpha}|\ $ of permutations commuting with $\ \alpha, \ $ and the number $\ | \mathrm{Par}[n]^{\alpha}| \ $ of partitions invariant under $\ \alpha, \ $ are completely determined by  cycle structure of $\ \alpha. \ $  Assume $\ \alpha \ $ has $\ k \ $ cycles and let $\ l=(l_1,\dots,l_k) \in \mathrm{par}(n)\ $ be the numerical partition of $\ n\ $ obtained by writing in decreasing order the  lengths of $\ \alpha$-cycles. Let $\ c(l) \ $ be the number of permutations conjugate to $\ \alpha. \ $ 
By the orbit stabilizer theorem there are $\ \displaystyle \frac{n!}{c(l)} \ $  permutations commuting with $\ \alpha. \ $ Therefore
\begin{equation}\label{b2}
\frac{1}{n!}\sum_{\alpha\in S_n}|S_n^{\alpha}|| \mathrm{Par}[n]^{\alpha}|
\ = \ \frac{1}{n!}\sum_{l\in \mathrm{par}(n)}c(l)\frac{n!}{c(l)} |\mathrm{Par}[n]^l| \ = \  
\sum_{l\in \mathrm{par}(n)}|\mathrm{Par}[n]^l|.
\end{equation}
Combining $(\ref{b1})$ and $(\ref{b2})$ it remains to compute 
$\ |\mathrm{Par}[n]^l|.\ $ We fix an origin on each cycle of $\ \alpha, \ $ so points on a cycle of length $\ s \ $ are numbered $\ 0,\cdots,s-1. \ $
The final counting however does not depend on this choice.
Now a partition $\ \pi \in \mathrm{Par}[n] \ $ is 
$\ \alpha$-invariant if and only if $\ \alpha \ $ descends to a bijection $\ a \to \alpha(a)\ $ on macrostates. The key observation is that if $\ a_1 \xrightarrow{\alpha} \cdots \xrightarrow{\alpha}  a_d \ $ is an $\ \alpha$-cycle on macrostates, then $\ \coprod_{1}^da_i \ $ is the union of microstates in a set $\ c\subseteq [k]\ $ of  $\ \alpha$-cycles in $\ X\ $ whose lengths are multiple of $\ d,\ $ and an $\ \alpha$-cycle in $\ c \ $ of length $\ s\ $ goes
trough the blocks $\ a_t \ $ respecting the cyclic ordering, passing
$\ \displaystyle \frac{s}{d}\ $ times trough each block $\ a_t. \ $  Let $\ i \ $ be the smallest index in $\ c. \ $
Without loss of generality we may assume that the first element of the cycle $\ i\ $ lies in $\ a_1. \ $ Then for each $\ j \neq i\ $ in $\ c \ $
there is unique $\ 0\leq n_j < d\ $ such that the point $\ n_j \ $ of the cycle 
$\ j \ $ lies in $\ a_1. \ $ Identity (\ref{nmds}) follows since for each
$\ \alpha \in S_n\  $ with cycle structure $\ (l_1,\dots,l_k)\in \mathrm{par}_k(n) \ $
 each $\ \alpha$-invariant partition can be constructed as:
\begin{itemize}
  \item  Choose a partition $\ \pi \in \mathrm{Par}[k] \ $ on the 
   $\ \alpha $-cycles. 
  \item For each block $\ c \in \pi\ $ choose a divisor $\ d|g_c, \ $ 
  where $\ g_c \ $ is the greatest common divisor 
   of the lengths of $\ \alpha$-cycles in $\ c.$
  \item For each block $\ c\in \pi\ $ choose a function $\ c\setminus i \to \{0,\cdots,d-1\} , \ $ where $\ i \ $ is the smallest element of 
      $\ c.$
\end{itemize}
\end{proof}

\section{Irreversibility From Reproducibility}\label{ifr}

Consider system $\  (X,A,m,\alpha). \ $  Following
Jaynes \cite{j} we say that a transition $ \ a \to b \ $ between macrostates is \textbf{reproducible} if 
$\ \alpha a \subseteq b , \ $ i.e. $\ \alpha i \in b \ $ for
$\ i \in a. \ $ A reproducible transition $ \ a \to b \ $ increases Boltzmann entropy, indeed 
\begin{equation}\label{rep}
S(a) \ = \ \mathrm{ln}|a| \ = \  \mathrm{ln}|\alpha a|  \ \leq \  \mathrm{ln}|b| \ = \ S(b)
\end{equation}
Let $ \ (A, \to) \ $ be the directed graph of \textbf{reproducible transitions} on macrostates. \\

\begin{thm}
{\em For any system $\  (X,A,m,\alpha) \ $ its graph $ \ (A, \to) \ $ of reproducible transitions consists of a disjoint union of cycles and rooted trees. Entropy is constant on each cycle. Entropy increases on paths to the roots.
}
\end{thm}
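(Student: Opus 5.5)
The key structural fact is that the reproducibility relation is a partial function on macrostates: each vertex of $\ (A,\to)\ $ has out-degree at most one. Indeed, if $\ a\to b\ $ and $\ a\to b'\ $ then $\ \alpha a\subseteq b\cap b'$. Since $\ m\ $ is surjective, $\ a\ $ is non-empty, so $\ \alpha a\neq\emptyset$. Macrostates are disjoint blocks of the partition $\ \{m^{-1}(a)\}$, hence $\ b=b'$. So we get a partial map $\ f:A_0\to A$, defined on the set $\ A_0\subseteq A\ $ of macrostates admitting a reproducible transition, with $\ a\to f(a)$. Functional graphs of partial maps on a finite set are well understood, and I will reduce the statement to that description.

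Next I describe the components. Consider the undirected connected components of $\ (A,\to)$. Each component $\ K\ $ has at most one cycle, because every vertex has out-degree at most one. Starting at any $\ a\in K\ $ and iterating $\ f$, finiteness of $\ A\ $ gives two possibilities. The orbit may reach a vertex with no outgoing edge, i.e.\ a sink. Otherwise it enters a cycle $\ a_1\to a_2\to\cdots\to a_r\to a_1$, with $\ r=1\ $ (a loop $\ \alpha a\subseteq a$) allowed. Any two forward orbits in the same component must end in the same terminal object: out-degree at most one means the unique cycle or unique sink is reached from every vertex, using a standard counting argument on edges versus vertices (a component with $\ v\ $ vertices has at most $\ v\ $ edges). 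Thus each component is either a tree directed towards a unique root, the sink, or a single cycle with in-trees hanging off it. In the latter case the hanging trees are rooted at cycle vertices, so the graph decomposes as cycles together with rooted trees, which is how I will read the statement ``disjoint union of cycles and rooted trees.'' I expect this combinatorial bookkeeping to be the only mildly delicate point, mainly in matching the phrasing of the statement (cycles with attached trees versus pure trees). I will state it as: removing the cycle edges leaves a forest of rooted trees whose roots are sinks or cycle vertices.

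Finally, the entropy claims follow from (\ref{rep}). Along any edge $\ a\to b\ $ we have $\ S(a)\leq S(b)$, so entropy is non-decreasing along every directed path, in particular along paths in a tree towards its root. On a cycle $\ a_1\to\cdots\to a_r\to a_1\ $ we get
$$S(a_1)\leq S(a_2)\leq\cdots\leq S(a_r)\leq S(a_1),$$
so all these inequalities are equalities and $\ S\ $ is constant on the cycle. Equivalently, $\ |\alpha a_t|=|a_t|\leq|a_{t+1}|$, which forces $\ \alpha a_t=a_{t+1}$.
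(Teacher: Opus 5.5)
\textbf{There is a genuine gap in the structural claim.} Your no-bifurcation argument is correct, and so is the entropy part, including constancy on cycles and $\alpha a_t=a_{t+1}$. What you derive, though, is only the generic shape of the functional graph of an arbitrary partial map: components that are either in-trees to a sink, or cycles with in-trees attached at cycle vertices. You then read ``disjoint union of cycles and rooted trees'' so that it admits the second shape. The statement, as the paper proves it, says more. Every component is either a bare cycle or a tree rooted at a sink, so that the cycles together with the trees $T_r$ ($r$ a sink) partition $A$. The remark right after the theorem relies on this: a cyclic part, plus an irreversible part flowing to roots where ``there is no further reproducible dynamics.'' A cycle with a nontrivial tree attached is neither a cycle nor such a rooted tree. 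Your argument uses nothing about $\alpha$ beyond $|\alpha a|=|a|$, so it cannot exclude this shape.

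\textbf{The missing idea: merges strictly increase entropy.} If $a\to c$ and $b\to c$ with $a\neq b$, then $\alpha a$ and $\alpha b$ are disjoint nonempty subsets of $c$, because $\alpha$ is injective and distinct blocks are disjoint. Hence $|a|<|c|$ and $|b|<|c|$. Suppose a forward orbit entered a cycle at some vertex without having started on the cycle. That vertex would be a merge, so its cycle predecessor would have strictly smaller entropy, contradicting constancy of $S$ on the cycle. This is exactly how the paper shows that a chain which returns must return to its first vertex, so no ``rho''-shaped orbits occur.

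You already had an equivalent tool in your last line. From $\alpha a_t=a_{t+1}$, any edge $b\to a_{t+1}$ gives $\alpha b\subseteq \alpha a_t$, hence $b\subseteq a_t$, hence $b=a_t$. So every cycle vertex has in-degree exactly one, and the trees you attach to cycles are empty. With this step added, your proof is complete.

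Your non-decreasing reading of ``entropy increases'' is the right one: the paper's next proposition explicitly discusses constant-entropy paths inside the trees of $(A,\to)$.
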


\begin{proof}[\textbf{Proof}]
A macrostate $ \ a \ $ is a sink ( no transitions out of $ \ a \ $)  if and only if $\ \alpha \ $ sends some microstates in $\ a \ $ to different macrostates.   In $ \ (A, \to) \ $ there are no bifurcations $\ a \to b, \ a \to c,\ $ with $ \ b \neq c, \ $ and in a merge $\ a \to c, \ b\to c, \ $ with $ \ a \neq b, \ $ we have $\ S(a)<S(c) \ $ and  $\ S(b)<S(c). \ $ Since $\ A \ $ is a finite set, each chain $\ a_1 \to \cdots \to a_d \to \cdots \ $ either stops in a sink, or returns to itself, and in the latter case it must to be cycle. Indeed, suppose the first return happens after $\ d$-steps. If $\ d=1\ $ we get cycle $\ a_1 \to a_1. \ $ For$\ d\geq 2,\ $ a cycle $\ a_i \to \cdots \to a_d \to a_i, \ $ with $\ 2 \leq i \leq d, \ $ is not possible  for we would have $\ S(a_d)=S(a_i)\ $
and  $\ S(a_d)<S(a_i)\ $ since there is merger $\ a_{i-1} \to a_i, \ a_{d} \to a_i.\ $ Thus for $\ d\geq 2\ $ we get cycle $\ a_1 \to \cdots \to a_d \to a_1 \ $ with constant entropy. \ \ Consider a sink $ \ r \ $ and let $\ T_r \subseteq A \ $ be the set of macrostates $\ a \ $ such that there is a (\ necessarily unique, otherwise there would be bifurcations\ ) path from $\ a \ $ to $\ r.\ $ The induced subgraph $\ (T_r, \to) \subseteq (A, \to) \ $ is a tree with root $\ r.\ $ The cycles together with the trees $\ T_r \ $ form a partition of $\ A. \ $
\end{proof}

From the reproducible viewpoint the system splits into a cyclic part, and an irreversible part representing a process where at each stage  a macrostate moves to a higher entropy macrostate, until it reaches a root macrostate where there is no further reproducible dynamics.

\begin{prop}
{\em Let $ \ (A, \to) \ $ be the graph of
reproducible transitions of  $\  (X,A,m,\alpha), \ $
and $ \ (A, \rightsquigarrow) \ $ be the graph of reproducible transitions 
of $\  (X,A,m,\alpha^{-1}). \ $ \\

\noindent a) Microstates on a cycle $\ a_1 \to \cdots \to a_d  \ $ in $ \ (A, \to) \ $ belong to a union of $\ \alpha$-cycles on $\ X \ $ whose lengths are multiple of $\ d. \ $ Macrostate $ \ v \in A \ $  is a leaf in $\  (A, \to) \ $ if and only if $\ \alpha^{-1}i \ $ lies in a root macrostate for all $\ i \in v.\  $ Cycles in $ \ (A, \rightsquigarrow) \ $ are reversed cycles in $ \ (A, \to). \ $\\

\noindent  b)  There is arrow  $ \ a \rightsquigarrow b \ $ if and only if
 $\ a \subseteq \alpha b.\ $ 
Given $ \ a \rightsquigarrow b, \ $ then either $\ S(a)<S(b)\ $ and $\ b \ $ is a root  of $ \ (A,\to), \ $
 or $\ S(a)=S(b)\ $ and  $\ b\to a.\ $ \\

\noindent  c) A tree $\ T \ $ in $ \ (A,\rightsquigarrow) \ $ is either the reverse of an inner constant entropy path in $ \ (A, \to), \ $
or it splits in a unique fashion into a tree $\ T^r \ $ whose vertices are roots of $ \ (A, \to), \ $ the reverse of an exit path of constant entropy in $ \ (A, \to) \ $ ending at the root of $\ T^r, \ $ the reverse of  entrance paths of constant entropy in $ \ (A, \to) \ $
beginning at a leave included in the $\ \alpha$-image of a vertex in $\ T^r. \ $ 
} 
\end{prop}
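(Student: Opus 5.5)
The plan is to reduce everything to the basic facts about $\ (A,\to)\ $ from the preceding theorem (no bifurcations, mergers strictly increase entropy, cycles have constant entropy), together with the reproducibility inequality (\ref{rep}). I would treat the three parts in order: (b) is the local statement from which (c) follows.

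For (a), take a cycle $\ a_1\to\cdots\to a_d\to a_1\ $ in $\ (A,\to)$. Reproducibility gives $\ \alpha a_t\subseteq a_{t+1}$, and constant entropy gives $\ |\alpha a_t|=|a_{t+1}|$, hence $\ \alpha a_t=a_{t+1}$. So $\ \coprod_t a_t\ $ is $\ \alpha$-invariant and is a union of $\ \alpha$-cycles. Each such cycle visits the blocks in cyclic order, so its length is a multiple of $\ d$; this is the same observation as in the proof of Theorem \ref{tnk}.

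For the leaf characterization, $\ v\ $ has no incoming arrow exactly when no macrostate $\ u\ $ satisfies $\ \alpha u\subseteq v$. Since the preimages $\ \alpha^{-1}i$, $\ i\in v$, lie in various macrostates $\ u$, the plan is to show that any such $\ u\ $ is forced to split under $\ \alpha$, i.e. to be a sink. This is the step I expect to be most delicate. The statement reads ``$\alpha^{-1}i\ $ lies in a root'' for each $\ i$, and I would check carefully whether the intended meaning of leaf also excludes cycle vertices, possibly adjusting the reading to ``$\,u\cap\alpha^{-1}v\neq\emptyset\Rightarrow u\ $ is a root''.

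The claim that cycles of $\ (A,\rightsquigarrow)\ $ are reversed cycles of $\ (A,\to)\ $ will follow from (b) once that is proved: on a cycle the entropy is constant, so every arrow $\ a\rightsquigarrow b\ $ on it satisfies $\ b\to a$.

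For (b), unwind the definitions: $\ a\rightsquigarrow b\ $ means $\ \alpha^{-1}a\subseteq b$, i.e. $\ a\subseteq\alpha b$. Then $\ |a|\le|\alpha b|=|b|$.
\begin{itemize}
\item If $\ S(a)=S(b)$, then $\ a=\alpha b$, so $\ \alpha b\subseteq a\ $ and $\ b\to a$.
\item If $\ S(a)<S(b)$, then $\ \alpha b\ $ meets $\ a\ $ properly. If $\ \alpha b\ $ were contained in a single macrostate $\ c$, then $\ c\supseteq a\ $ and $\ c\neq\emptyset\cap a$, so $\ c=a$, contradicting $\ |a|<|b|$. Hence $\ \alpha b\ $ meets at least two macrostates, $\ b\ $ has no outgoing arrow in $\ (A,\to)$, and $\ b\ $ is a root.
\end{itemize}

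For (c), apply the theorem on $\ (A,\to)\ $ to $\ \alpha^{-1}\ $ to know that $\ (A,\rightsquigarrow)\ $ is a union of cycles and rooted trees. Then classify the arrows of a tree $\ T\ $ using (b). Constant-entropy arrows are reversed $\ \to$-arrows. Strict-entropy arrows land in roots of $\ (A,\to)$, and from a root only strict arrows can depart, since an equality arrow $\ r\rightsquigarrow b\ $ would give $\ b\to r\ $ with equal entropy, which is allowed only if $\ r\ $ is not a merge point.

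Starting from the root of $\ T\ $ and moving outward, the vertices split into three pieces:
\begin{itemize}
\item the maximal subtree $\ T^r\ $ of $\ (A,\to)$-roots connected by strict arrows;
\item a constant-entropy segment, which is a reversed exit path ending at a root of $\ T^r$;
\item further constant-entropy branches, which are reversed entrance paths starting at leaves $\ v\ $ with $\ v\subseteq\alpha(\text{vertex of }T^r)$, using $\ a\subseteq\alpha b\ $ from (b).
\end{itemize}
If no strict arrow occurs at all, $\ T\ $ is the reverse of a constant-entropy inner path.

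Uniqueness of this splitting comes from the fact that the entropy jumps occur exactly on the arrows into roots. I expect the bookkeeping here, in particular matching the paper's terms ``exit/entrance path'', to be the main obstacle, more than any genuine mathematical difficulty.
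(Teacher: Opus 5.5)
Your arguments for (b), for the first claim of (a), and for the reversal of cycles are correct and close to the paper's. You prove $\alpha a_t=a_{t+1}$ directly instead of citing the proof of Theorem \ref{tnk}, and you reverse cycles via (b) plus constancy of entropy on $\rightsquigarrow$-cycles instead of via $b=\alpha a\Leftrightarrow a=\alpha^{-1}b$. The leaf criterion, which you leave as a plan, is neither delicate nor in need of reinterpretation. Since $\alpha^{-1}i$ is a single microstate, the stated condition is literally ``every $u$ meeting $\alpha^{-1}v$ is a root''. Cycle vertices are excluded automatically because they have an incoming arrow. If some $u$ meeting $\alpha^{-1}v$ were not a root, then $\alpha u\subseteq w$ for some $w$; since $\alpha u$ meets $v$, we get $w=v$ and $u\to v$. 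Conversely, any $u\to v$ meets $\alpha^{-1}v$, so it would be a root with an outgoing arrow, which is impossible.

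The genuine gap is in (c). Your claim that only strict arrows leave a root of $(A,\to)$ is false, and your own caveat contradicts it. By (b), a link $x\rightsquigarrow y$ is strict exactly when $y$ is a root of $(A,\to)$. So a link $r\rightsquigarrow s$ from a root $r$ to a non-root $s$ is an equality link with $\alpha s=r$, which is precisely the first step of the reversed exit path your decomposition then uses. The correct local picture, which is all the paper's proof actually establishes, is as follows. Links into roots are strict, and their source $v$ is a leaf, because $\alpha^{-1}v\subseteq y$ with $y$ a root; this needs the leaf criterion of (a), not just $v\subseteq\alpha y$. Every other link is a reversed equality link.

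Your global three-piece shape is asserted rather than derived, and it is not forced: a reversed entrance path can end at another root that again receives strict links. Take $X=[8]$, $\alpha=(1\,4\,6\,8)(2\,5\,7\,3)$ and macrostates $R_1=\{1,2,3\}$, $v=\{4,5\}$, $R_2=\{6,7\}$, $w=\{8\}$. The graph $(A,\to)$ has only the arrows $v\to R_2$ and $w\to R_1$, with roots $R_1,R_2$. Meanwhile $(A,\rightsquigarrow)$ is the single path $w\rightsquigarrow R_2\rightsquigarrow v\rightsquigarrow R_1$. Here $R_2\rightsquigarrow v$ leaves a root with equal entropy, and the two roots are separated. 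So no single tree $T^r$ of roots, with one reversed exit path and reversed entrance paths attached to $T^r$, exhausts $T$.

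Likewise, a tree without strict links is a reversed maximal constant-entropy chain that may begin at a leaf of $(A,\to)$, so it need not be inner. Hence the deferred ``bookkeeping'' is exactly where the argument breaks. Read literally, the description in (c) is too strong. What you can prove is the link classification, or a recursive version of the splitting in which clusters of roots and constant-entropy chains alternate.
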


\begin{proof}[\textbf{Proof}]
\noindent a) The first statement follows from the proof of Theorem \ref{tnk}. 
Let $\ v \ $ be a leaf of $ \ (A, \to), \ $ if $\ j\in a\ $ and  $\ \alpha j  \in v, \ $  then $\ a \ $  is not a cyclic macrostate. If $\ a \ $ were a non-root macrostate, then  $\ \alpha a \subseteq v, \ $ i.e. we would have link
$\ a \to v, \ $ and $\ v \ $ would not be a leaf.  If $\ \alpha^{-1}i \ $ lies in a root macrostate for all $\ i \in v,\  $ then $v$ is not a cyclic macrostate, and there is no arrow $\ a \to v,\ $ because $\ a \ $ would have to be a root. 
The third statement is clear since $\ b=\alpha a\ $ if and only if $\ a=\alpha^{-1} b.\ $ \\

\noindent  b)  Note that $ a \rightsquigarrow b\ $ implies that $\ |a|\leq |b|,\ $ and there is $\ i\in b \ $ such that $\ \alpha i \in a. \ $ If $\ b \to a \ $ then $\ |b|\leq |a|, \ $ and $\ S(a)=S(b). \ $  Otherwise, there is $\ j\in b \ $ such that $\ \alpha j \notin a, \ $
therefore $\ b \ $ is a root for $\ \to, \ $ and 
$\alpha^{-1}a$ is a proper subset of $\ b, \ $ so $\ S(a)<S(b).  $\\

\noindent c) An inner constant entropy path is a maximal
path $\ a_1 \to \cdots \to a_k \ $ between non-roots non-leaves vertices
in $ \ (A, \to) \ $ such that $\ S(a_1) = \cdots = S(a_k). \ $  The entrance path associated to a leave $\ v\ $ is the maximal path of constant entropy in $ \ (A, \to) \ $ starting at $ \ v. \ $ Similarly, the exit path associated to a root $\ r\ $ is the maximal path of constant entropy in $ \ (A, \to) \ $ ending at $ \ r. \ $ The result follows by considering the four different option for links in
$\ T. \ $ A link $\ r \rightsquigarrow s \ $ between roots in  $ \ (A, \to) \ $ is a link in $\ T^r. \ $ A link $\ r \rightsquigarrow s \ $ with $\ r\ $ root 
and $\ s\ $ non-root in $ \ (A, \to), \ $ then $\ S(r)=S(s) \ $ and 
$\ s \to r .\ $ Thus $\ r \rightsquigarrow s \ $ is the first step in the reverse of an exit path.  A link $\ r \rightsquigarrow s \ $ with $\ r\ $ non-root 
and $\ s\ $ root in $ \ (A, \to), \ $ then $\ S(r)<S(s) \ $ and 
$\ \alpha^{-1}r \subseteq s ,\ $ so $\ r \ $ is a leaf. Thus $\ r \rightsquigarrow s \ $ is the last step in the reverse of an entrance path. 
A link $\ r \rightsquigarrow s \ $ with $\ r \ $ and $\ s\ $ non-roots 
in $ \ (A, \to), \ $ then $\ S(r)=S(s) \ $ and 
$\ s \to r ,\ $  Thus $\ r \rightsquigarrow s \ $ is a link either in an inner constant entropy path, or an entrance path or an exit path.  
\end{proof}

\

A \textbf{reversible} micro-macro dynamical system  $\ (X,A,m,\alpha, r) \ $ comes with a reversion map $\ r:X \to X\ $ such that $\ r^2=1\ $ and $\ \alpha^{-1}= r\alpha r.\ $ We say that $\ r \ $ is \textbf{invariant} if $\ mr=m, \ $ i.e. $\ r \ $ descends to the identity map on $\ A.\ $ We say that $\ r \ $ is \textbf{equivariant} if it descends to a bijection $\ r:A \to A \ $ such that $\ mr=rm. \ $ We say that $ \ r\ $  \textbf{preserves entropy} \ if $\ S(\alpha i)=S(i)\ $ for  $\ i\in X.\ $   Note that the various constructions on  micro-macro systems from Section \ref{mms} can be easily adapted to include reversion maps.\\

\begin{prop}\label{mi}
{\em Let $\ (X,A,m,\alpha, r) \ $ be a reversible system. The reversion
map $\ r:X \to X \ $ yields an isomorphism from $\ (X,A,m,\alpha) \ $
to $\ (X,A,m,\alpha^{-1}) \ $ if and only if $\ r \ $ is equivariant. 
}
\end{prop}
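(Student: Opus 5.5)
We will verify the two conditions in the definition of isomorphism of systems, taking $\beta = r$ as the candidate bijection from $(X,A,m,\alpha)$ to $(X,A,m,\alpha^{-1})$. Since $r^2=1$, $r$ is a bijection with $r^{-1}=r$. The dynamics condition $\alpha_2=\beta\alpha_1\beta^{-1}$ becomes $\alpha^{-1}=r\alpha r^{-1}=r\alpha r$. This holds for every reversible system by the defining axiom, independently of any equivariance assumption. So the whole content of the statement sits in the macrostate condition.

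For the macrostate condition we must decide when $r$ induces a map $\widehat{r}:A\to A$ with $\widehat{r}\,m = m\,r$.

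First suppose $r$ is equivariant. By definition $r$ descends to a bijection $r:A\to A$ with $mr=rm$. Taking $\widehat{r}$ to be this descended map gives $\widehat{r}m=mr$ directly, so $r$ is an isomorphism.

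Conversely, suppose $r$ is an isomorphism, so there is $\widehat{r}:A\to A$ with $\widehat{r}m=mr$. We must show that $\widehat{r}$ is a bijection, which is exactly the extra requirement in equivariance.
\begin{itemize}
\item Applying the identity twice and using $r^2=1$ gives $\widehat{r}^2 m = \widehat{r} m r = m r^2 = m$.
\item Since $m$ is surjective, this forces $\widehat{r}^2=1_A$.
\item Hence $\widehat{r}$ is an involution, in particular a bijection, and $r$ is equivariant with descended map $\widehat{r}$.
\end{itemize}
Uniqueness of $\widehat{r}$ also follows from surjectivity of $m$, consistent with the remark in the definition of isomorphism.

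No step is a real obstacle. The one point needing care is the converse: equivariance asks for the descended map to be a bijection, not just a well-defined map. The involution argument above, which uses $r^2=1$ together with surjectivity of $m$, is what supplies this. In the written proof we will also note that well-definedness of $\widehat{r}$ just means that $r$ maps each block $m^{-1}(a)$ into a single block, which by the involution property is then carried exactly onto that block.
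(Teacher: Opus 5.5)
Your proof is correct and follows the paper's argument: the dynamics condition $r\alpha r=\alpha^{-1}$ holds automatically by reversibility, so being an isomorphism reduces to $r$ descending to a map on $A$ compatible with $m$. Your involution step ($\widehat{r}^{2}m=mr^{2}=m$, hence $\widehat{r}^{2}=1_A$ by surjectivity of $m$) shows the descended map is automatically a bijection, which supplies a detail the paper's proof leaves implicit.
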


\begin{proof}[\textbf{Proof}]
Map $\ r\ $ is a bijection since it is its own inverse. For it to be an isomorphism it should satisfy $\ r\alpha = r\alpha^{-1}, \ $ which it does,
and there should be map $\ r:A \to A \ $ such that $\ rm=rm, \ $ i.e.
$\ r \ $ should be equivariant. 
\end{proof}

From Proposition \ref{mi} we learn that a micro-macro reversible system
need not be isomorphic to its inverse system, and that this happens exclusively for non-equivariant reversion maps. In Section \ref{ioa} we consider systems with entropy preserving reversion maps, and in Section \ref{ifa} we consider systems with non entropy preserving reversion maps.

\begin{prop}\label{p10}
{\em Let $\ (X,A,m,\alpha, r) \ $ be a reversible system with $\ r \ $
equivariant. Transitions $\ a \to b \ $ and $\ rb \to ra \ $
are reproducible if and only if $ \ \alpha a=b.\ $  
Under this condition, if $\ r \ $ is invariant, then we have a cycle of reproducible transitions 
$\ a \to b.$  }
\end{prop}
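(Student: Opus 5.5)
The plan is to prove both directions directly from the definitions, using only three facts: $\alpha$ is a bijection, $r$ is an involution satisfying $r\alpha r=\alpha^{-1}$, and, since $r$ is equivariant, $r$ maps each macrostate $a$ (viewed as a subset of $X$) bijectively onto the macrostate $ra$. In particular $|ra|=|a|$. Recall that a transition $a\to b$ is reproducible exactly when $\alpha a\subseteq b$.

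\emph{The direction ($\Leftarrow$).} Suppose $\alpha a=b$. Then $\alpha a\subseteq b$ holds trivially, so $a\to b$ is reproducible. For the reverse transition, use $\alpha r=r\alpha^{-1}$, which follows from $r\alpha r=\alpha^{-1}$ and $r^2=1$. This gives
$$\alpha(rb)\;=\;r\alpha^{-1}b\;=\;r a,$$
so $\alpha(rb)\subseteq ra$, and $rb\to ra$ is reproducible as well.

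\emph{The direction ($\Rightarrow$).} Suppose both $\alpha a\subseteq b$ and $\alpha(rb)\subseteq ra$. Since $\alpha(rb)=r\alpha^{-1}b$, the second inclusion reads $r\alpha^{-1}b\subseteq ra$. Applying the involution $r$ gives $\alpha^{-1}b\subseteq a$, and applying $\alpha$ gives $b\subseteq \alpha a$. Combined with $\alpha a\subseteq b$, this yields $\alpha a=b$. Alternatively, one can argue by cardinality: reproducibility of $a\to b$ gives $|a|\le|b|$, and reproducibility of $rb\to ra$ gives $|b|=|rb|\le|ra|=|a|$. Hence $|a|=|b|$, and the inclusion $\alpha a\subseteq b$ between sets of equal size is an equality. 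Both routes are routine.

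\emph{The invariant case.} If $r$ is invariant, then $ra=a$ and $rb=b$ as macrostates. The two reproducible transitions are then $a\to b$ and $b\to a$, which together form a cycle of reproducible transitions in $(A,\to)$ (a loop when $a=b$). This is consistent with the earlier theorem that entropy is constant on cycles, since here $|a|=|b|$.

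The only step that needs care is the identity $\alpha r=r\alpha^{-1}$ and its use on subsets. The set-level statements $r(ra)=a$ and $|ra|=|a|$ depend on equivariance: the preimage of the macrostate $ra$ must be exactly $r$ applied to the preimage of $a$. This holds because $mr=rm$ on $X$ and $r$ is a bijection on $A$. I will state this explicitly before running the argument.
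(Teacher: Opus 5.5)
Your proposal is correct and follows essentially the paper's proof. The converse goes through $\alpha(rb)=r\alpha^{-1}b=ra$, the forward direction through the counting chain $|a|=|\alpha a|\le|b|=|rb|=|\alpha rb|\le|ra|=|a|$ (your ``alternative'' route), and the invariant case through $ra=a$, $rb=b$. Your primary inclusion argument ($r\alpha^{-1}b\subseteq ra\Rightarrow\alpha^{-1}b\subseteq a\Rightarrow b\subseteq\alpha a$) is a harmless variant that avoids counting, and your explicit check that equivariance gives $r(m^{-1}(a))=m^{-1}(ra)$ fills in a step the paper leaves implicit.
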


\begin{proof}[\textbf{Proof}]
If $\ a \to b \ $ and $\ rb \to ra \ $ are reproducible, then 
$$\ |a|\ = \ |\alpha a|\ \leq\ |b|\ = \ |rb| \ = |\alpha rb| 
\ \leq\ |ra| \ = \ |a|,\ $$ thus $\ b=\alpha a. \ $ Conversely, if $\ b=\alpha a, \ $ then
$\ \alpha rb\ = \ r\alpha^{-1}b\ = \ ra,\ $   thus  $\ a \to b \ $ and $\ rb \to ra. \ $ Assume now that $\ r \ $ is invariant, we have $\ ra=a, \ $ $\ rb=b,\ $  and transitions $\ a\to b\ $ and $\ b\to a,\ $ so we have cycle
$\ a\to b.$
  
\end{proof}

One may wish to consider a slightly weaken notion of reproducibility where for a given $\ 0 <\varepsilon < \frac{1}{2}, \ $ we say that a transition $\ a\to b  \ $ between macrostates is $\ \varepsilon$-reproducible if $\ [\alpha]_{a,b} \geq 1-\varepsilon, \ $ i.e. most microstates in $\ a \ $ move to $\ b \ $ in a unit of time, but an $\ \varepsilon\ $ minority of microstates in $\ a \ $ are allowed to move to other macrostates. Just as before one can construct a graph  $\  (A,\xrightarrow{\varepsilon})\ $ of $\ \varepsilon$-reproducible transitions. Here we limit ourselves to consider systems $\ (X,A,m, \alpha)\ $ such that $\  (A,\xrightarrow{\varepsilon})\ $ has no sinks, i.e. we assume that there is map $\ t:A \to A\ $ such that
$\ [\alpha]_{a,t(a)} \geq  1- \varepsilon \ $
for $\ a \in A.$ \\

\begin{thm}{\em Let system $\ (X,A,m, \alpha)\ $ be such that there is map $\ t:A \to A\ $ with $\ [\alpha]_{a,t(a)} \geq  1- \varepsilon. \ $ 
If $\ 0 <\varepsilon < \frac{1}{2} \ $ is such  that $\ S(a)<S(b) \ $ implies  $\ S(a)<S(b)+ \mathrm{ln}(1-\varepsilon) ,\ $ then the 
decreasing entropy ratio is $\ \varepsilon\ $ bounded,
i.e. $\ \displaystyle\frac{|D|}{|X|}\leq \varepsilon. \ $

}
\end{thm}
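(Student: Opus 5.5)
The plan is to work one macrostate at a time. I will show that inside each $\ a\in A\ $ the entropy-decreasing microstates form at most an $\ \varepsilon\ $ fraction of $\ a, \ $ and then sum over $\ A. \ $ The key point is that the $\ \varepsilon$-reproducible target $\ t(a)\ $ cannot have lower entropy than $\ a, \ $ so no microstate of $\ a\ $ that moves into $\ t(a)\ $ can lie in $\ D.$

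\textbf{Step 1: $\ S(t(a))\geq S(a)+\mathrm{ln}(1-\varepsilon).$} By (\ref{T}) the hypothesis $\ [\alpha]_{a,t(a)}\geq 1-\varepsilon\ $ reads $\ |\llbracket a,t(a)\rrbracket|\geq (1-\varepsilon)|a|. \ $ Since $\ \alpha\ $ is injective, $\ \alpha\llbracket a,t(a)\rrbracket\subseteq t(a)\ $ has the same cardinality. Hence $\ |t(a)|\geq (1-\varepsilon)|a|, \ $ which gives the claimed bound after taking logarithms.

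\textbf{Step 2: $\ S(t(a))\geq S(a).$} Suppose instead that $\ S(t(a))<S(a). \ $ Apply the gap hypothesis with the roles $\ (t(a),a)\ $ in place of $\ (a,b)\ $ (the hypothesis is stated for an arbitrary pair). It gives $\ S(t(a))<S(a)+\mathrm{ln}(1-\varepsilon), \ $ which contradicts Step 1.

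I expect this step to be the only delicate point. It is exactly where the hypothesis on entropy gaps is used: the entropy values must be separated by at least $\ -\mathrm{ln}(1-\varepsilon), \ $ so an $\ \varepsilon$-reproducible step cannot lose entropy. I will state clearly how the hypothesis is instantiated.

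\textbf{Step 3: bounding $\ D\cap a\ $ and summing.} Take $\ i\in D\cap a. \ $ Then $\ S(\alpha i)<S(a)\le S(t(a)), \ $ so $\ \alpha i\notin t(a). \ $ Therefore $\ D\cap a\subseteq a\setminus \llbracket a,t(a)\rrbracket, \ $ and so
$$|D\cap a| \ \leq \ |a| - |\llbracket a,t(a)\rrbracket| \ \leq \ \varepsilon |a|.$$
Summing over $\ a\in A\ $ and using that the macrostates partition $\ X\ $ gives $\ |D|\leq \varepsilon|X|.$
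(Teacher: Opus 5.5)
Your proof is correct and follows the paper's argument step for step. The paper also obtains $S(t(a))\geq S(a)+\mathrm{ln}(1-\varepsilon)$ from $|t(a)|\geq|\alpha a\cap t(a)|$, upgrades this to $S(t(a))\geq S(a)$ by applying the gap hypothesis to the pair $(t(a),a)$, and then bounds $\sum_{S(b)<S(a)}[\alpha]_{ab}\leq\varepsilon$ for each $a$ before averaging with weights $p_a$; this last step is your inclusion $D\cap a\subseteq a\setminus\llbracket a,t(a)\rrbracket$ written in terms of transition probabilities.
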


\begin{proof}
We have that
$\ \ \displaystyle \frac{|t a|}{|a|}\ \geq  \ 
\frac{ |\alpha a \cap ta|}{|a|} \  = \ 
 \frac{ |\llbracket a, ta\rrbracket|}{|a|} \  =
 \  [\alpha]_{a,ta} \ \geq  \ 1 - \varepsilon. \ \ $ Thus 
\begin{equation}\label{yyyy}
S(ta) \ = \  \mathrm{ln}|ta|  \ \geq \ \mathrm{ln}|a|\  + \ \mathrm{ln}(1-\epsilon)  \ =  \ S(a) \ + \ \mathrm{ln}(1-\varepsilon).
\end{equation}
If $\ S(ta) < S(a), \ $ then we would also have that 
$\ S(ta) <  S(a)+ \mathrm{ln}(1-\varepsilon) \ $ by our choice of
 $\ \varepsilon. \ $
We conclude by (\ref{yyyy}) that $\ S(ta)  \geq  S(a),\ $ and therefore
$$ \frac{|D|}{|X|}\ = \
 \sum_{\substack{a,b \in A \\ S(b)<S(a)}}
 \frac{|a|}{|X|}\frac{ |\llbracket a,b\rrbracket|}{|a|} \ = \ 
\sum_{\substack{a,b \in A \\ S(b)<S(a)}}p_a[\alpha]_{ab} \ = $$
$$\sum_{a\in A}p_a\bigg(\sum_{\substack{b \in A \\ S(b)<S(a)}}[\alpha]_{ab}\bigg)
\ \leq \  \varepsilon \sum_{a\in A} p_a \ = \ \varepsilon .$$

\end{proof}

Despite its inherent interest, reproducibility — even in its more inclusive $\varepsilon$-version — may prove too restrictive. There seems to be no a priori reason to exclude systems with, say, equally likely transitions $\ a\to b\ $ and $\ a\to c;\ $ while this sort of situations may not arise in examples of interest, one should prove this rather than assume it from the outset. In the next section we develop a formalism that allows this sort of situations.

\section{Irreversibility From Coarse-Graining}\label{icg8}

The structural maps in a micro-macro dynamical system induce convex maps between probabilities spaces as in the diagram 
\[
  \begin{tikzcd}
    \mathrm{cProb}(X) \arrow{r}{c\alpha_*} \arrow[hookrightarrow]{d}  & \mathrm{cProb}(X)  \\
     \mathrm{Prob}(X) \arrow{r}{\alpha_*} & \mathrm{Prob}(X) \arrow{u}[swap]{c} \arrow{d}{m_*}\\
     \mathrm{Prob}(A) \arrow{u}{c} \arrow{r}{[\alpha]} & \mathrm{Prob}(A)
  \end{tikzcd}
\]

The pushforward maps $\ \alpha_* \ $ and $\ m_* \ $ 
are given, respectively, by $\ (\alpha_*p)_i = p_{\alpha^{-1}i} \ $
and $\ (m_*p)_a = p(a). \ $
The coarse-graining map $\ c \ $ on  $\ \mathrm{Prob}(A)\ $ was introduced in (\ref{cgrain}), the coarse-graining map 
$\ c: \mathrm{Prob}(X)\to \mathrm{Prob}(X)\ $ is given by 
$\ \displaystyle c(q)_i= \frac{q(a)}{|a|}\ $ for $\ i\in a. \ $
Coarse-graining $\ c\ $ is an idempotent map, its image 
$\ \mathrm{cProb}(X) \ $ consists of
probability measures on $\ X\ $  homogeneous on macrostates.
Note that $\ \alpha_* \ $ preserves Shannon entropy because   
$\ \alpha \ $ is a bijection.\\

\begin{prop}\label{l5}{\em Stochastic map $\ [\alpha]:A \to A \ $ may be identified with the convex map \\
$\ [\alpha]:\mathrm{Prob}(A) \to \mathrm{Prob}(A)\ $ given by
$\ ([\alpha]q)_b= \sum_{a\in A}q_a[\alpha]_{ab}. \ $ Each  $\ q \in \mathrm{Prob}(A) \ $ defines a  Markov chain $\ (q,[\alpha]) \ $ on $\ A\ $  given by the probability law\footnote[2]{Defined on the product or cylinder $\sigma$-algebra on $\ A^{\mathbb{N}}. \ $ Maps
$\ X_n: A^{\mathbb{N}} \to A\ $ are given by $\ X_n(w)=w_n, \ $
and $\ \  \mathrm{Pr}[X_0=a_0 , \dots ,X_n= a_n]\ = \ 
\mathrm{Pr}[w\in A^{\mathbb{N}} \ | \ X_0(w)=a_0 , \dots ,X_n(w)= a_n].$ } 
on $\ A^{\mathbb{N}}:\ $ 
\begin{equation}\label{mc}
\mathrm{Pr}[X_0=a_0 , \dots ,X_n= a_n] \ = \ q_{a_0} \frac{|\llbracket a_0a_1\rrbracket| \ |\llbracket a_1a_2\rrbracket |\cdots
|\llbracket a_{n-1}a_n\rrbracket|}{|a_{0}||a_{1}|\cdots |a_{n-1}|}. 
\end{equation} 
Markov chain $\ (p,[\alpha]) \ $ is stationary. \ Furthermore
$\ \ [\alpha]=m_*\alpha_*c.\ $ 
}
\end{prop}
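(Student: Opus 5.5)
The proposition has four claims, and I would check them in order: the identification of $[\alpha]$ with a convex map, the path law (\ref{mc}), stationarity of $(p,[\alpha])$, and the factorization $[\alpha]=m_*\alpha_*c$.

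For the first claim I would verify that $[\alpha]$ is a stochastic matrix. Since the sets $\llbracket a,b\rrbracket$, $b\in A$, partition $a$ (each $i\in a$ has $\alpha i$ in exactly one macrostate), we get $\sum_b [\alpha]_{ab}=\sum_b|\llbracket a,b\rrbracket|/|a| = 1$. Hence $q\mapsto q[\alpha]$ maps $\mathrm{Prob}(A)$ to itself and is the restriction of a linear map, so it is convex. Conversely, the matrix is recovered from the images of the Dirac measures, $[\alpha]\delta_a$, which justifies the identification.

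For the path law, I would invoke the standard existence of a Markov chain with initial law $q$ and transition matrix $[\alpha]$, via the Kolmogorov/Ionescu-Tulcea extension on the cylinder $\sigma$-algebra. Its finite-dimensional laws are $q_{a_0}[\alpha]_{a_0a_1}\cdots[\alpha]_{a_{n-1}a_n}$, and substituting (\ref{T}) gives (\ref{mc}) directly.

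For stationarity, it suffices to show $p[\alpha]=p$, since a chain started at an invariant law is stationary. Compute
$$(p[\alpha])_b=\sum_a \frac{|a|}{|X|}\frac{|a\cap\alpha^{-1}b|}{|a|}=\frac{|\alpha^{-1}b|}{|X|}=\frac{|b|}{|X|}=p_b .$$
The middle equality holds because the macrostates partition $X$, and the next one because $\alpha$ is a bijection. This bijectivity step is the only real input; invariance then propagates to all shifts of the finite-dimensional distributions.

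Finally, I would check $[\alpha]=m_*\alpha_*c$ on $q\in\mathrm{Prob}(A)$ by direct evaluation. We have $(cq)_i=q_a/|a|$ for $i\in a$, and $(\alpha_*cq)_j=(cq)_{\alpha^{-1}j}$. Then
$$(m_*\alpha_*cq)_b=\sum_{j\in b}(cq)_{\alpha^{-1}j}=\sum_{i\in\alpha^{-1}b}(cq)_i=\sum_a\frac{q_a}{|a|}\,|a\cap\alpha^{-1}b| =\sum_a q_a[\alpha]_{ab}.$$
There is no genuine obstacle here. The main care point is keeping the conventions consistent: whether $[\alpha]$ acts on row vectors, and the pushforward formula $(\alpha_*p)_i=p_{\alpha^{-1}i}$. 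Both linear maps agree, so they agree on all of $\mathrm{Prob}(A)$.
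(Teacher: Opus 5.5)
Your proposal is correct and follows essentially the same route as the paper. The path law and the identification with a convex map come straight from the definition of $[\alpha]_{ab}$. Stationarity is the same one-line count: you use $|a\cap\alpha^{-1}b|$ and $|\alpha^{-1}b|=|b|$, where the paper uses $|\alpha(a)\cap b|$ and $\alpha(X)=X$. The factorization $[\alpha]=m_*\alpha_*c$ is checked by the same direct evaluation, and your extra checks (row sums equal to one, existence via Kolmogorov extension) only spell out what the paper leaves implicit.
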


\begin{proof}[\textbf{Proof}]
 The first part follows from (\ref{T}).  The chain $\ (p,[\alpha]) \ $ is stationary since
\begin{equation*}
([\alpha]p)_b \ = \ \sum_{a\in A}p_a[\alpha]_{ab} \ = \ 
\sum_{a\in A}\frac{ |a||\alpha(a) \cap b |}{|X||a|} \ = \ \frac{|\alpha(\bigsqcup_{a\in A}a)  \cap b |}{|X|}
\ = \  \frac{|X \cap  b |}{|X|} \ = \ \frac{|b |}{|X|}\ = \ p_b.
\label{eq:}
\end{equation*} 
For $\ q \in \mathrm{Prob}(A) \ $ the identity $\ [\alpha]q=m_*\alpha_*cq \ $ holds since
$$(m_*\alpha_*cq)_b \ = \ (\alpha_*cq)(b) \ = \ 
(cq)(\alpha^{-1}b)\ = \ \sum_{i\in \alpha^{-1}b}(cq)_i \ = $$
$$\sum_{a\in A}\sum_{i\in a\cap\alpha^{-1}b}\frac{q_a}{|a|} \ = \ 
\sum_{a\in A}q_a\frac{|a\cap\alpha^{-1}b|}{|a|} \ = \  \sum_{a\in A}q_a[\alpha]_{ab} \  = \ ([\alpha]q)_b.$$
\end{proof}

Map $\ c\alpha_*:\mathrm{cProb}(X) \longrightarrow \mathrm{cProb}(X)\ $ is given for  by 
\begin{equation}\label{R}
(c\alpha_*q)_j \ = \ 
\frac{ \alpha_*q(b) }{|b|} \ = \ 
\frac{ q(\alpha^{-1}b) }{|b|} \ = \ 
\sum_{i\in \alpha^{-1}b}\frac{ q_i }{|b|}\ = \ 
\sum_{i\in X}q_i[c\alpha_*]_{ij},
\end{equation}
where the stochastic matrix $\ [c\alpha_*] \ $ is given by $\ \displaystyle [c\alpha_*]_{ij}=\frac{1}{|b|}\ $ if $\ \alpha(i), j \in b, \ $
and $\ [c\alpha_*]_{ij}=0 \ $ otherwise.  The uniform probability measure $\ u\ $ on $\ X\ $ is stationary under $\ [c\alpha_*]_{ij}.$

\begin{thm}\label{c13}
{\em The maps $\ c\ $ and $\ m_*\ $ yield an isomorphism between the 
stochastic maps $\ c\alpha_* \ $ and $\ [\alpha] \ $ represented by the commutative square
\[
  \begin{tikzcd}
    \mathrm{cProb}(X) \arrow{r}{c\alpha_*} \arrow{d}{m_*}  & \mathrm{cProb}(X)\arrow{d}{m_*}  \\
     \mathrm{Prob}(A) \arrow{u}{c} \arrow{r}{[\alpha]} & \mathrm{Prob}(A)\arrow{u}{c}
  \end{tikzcd}
\]
Markov chains $\ (u,c\alpha_*) \ $ and $\ (p,[\alpha]) \ $ are isomorphic. 
}
\end{thm}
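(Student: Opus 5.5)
The plan is to show three things in turn: that $m_*$ and $c$ are mutually inverse bijections between $\mathrm{cProb}(X)$ and $\mathrm{Prob}(A)$; that the square commutes; and that the two Markov chains correspond under this bijection.

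\textbf{Step 1: $m_*$ and $c$ are inverse.} For $q\in\mathrm{Prob}(A)$, the measure $cq$ gives each $i\in a$ the mass $q_a/|a|$. Hence $(m_*cq)_a=\sum_{i\in a}q_a/|a|=q_a$, so $m_*c=1$ on $\mathrm{Prob}(A)$. Conversely, take $q\in\mathrm{cProb}(X)$, so $q$ is constant on each macrostate, say $q_i=q(a)/|a|$ for $i\in a$. Then $(c\,m_*q)_i=(m_*q)_a/|a|=q(a)/|a|=q_i$, so $c\,m_*=1$ on $\mathrm{cProb}(X)$. The coarse-graining $c$ on $\mathrm{Prob}(X)$ factors as $c\circ m_*$ with $c$ the map on $\mathrm{Prob}(A)$, since both send $q$ to $i\mapsto q(a)/|a|$. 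Both maps are affine, so they give an isomorphism of convex sets.

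\textbf{Step 2: the square commutes.} By Proposition \ref{l5} we have $[\alpha]=m_*\alpha_*c$. Therefore
$$c[\alpha]m_*=c\,m_*\alpha_*c\,m_*=c\alpha_*$$
on $\mathrm{cProb}(X)$, because $c\,m_*$ restricted to $\mathrm{cProb}(X)$ is the identity, and $c\,m_*$ applied to any element of $\mathrm{Prob}(X)$ equals $c$ by Step 1. Equivalently, $m_*(c\alpha_*)c=m_*\alpha_*c=[\alpha]$. So $c\alpha_*$ is the conjugate of $[\alpha]$ under the bijection $c$.

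\textbf{Step 3: the Markov chains correspond.} We have $m_*u=p$, since $u(a)=|a|/|X|$, and $cp=u$. The remaining task is to compare path laws. The chain $(u,c\alpha_*)$ lives on $X$ with transition matrix $[c\alpha_*]_{ij}=1/|b|$ whenever $\alpha i,j\in b$. The claim I will make precise is that its image under $m$ on paths is the chain $(p,[\alpha])$.

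Since $[c\alpha_*]_{ij}$ depends on $j$ only through $m(j)$ and is uniform within that block, the lumpability criterion applies: for $i\in a$,
$$\sum_{j\in b}[c\alpha_*]_{ij}=\mathbf 1[\alpha i\in b].$$
This depends on $i$ itself and not only on $a$, so the naive lumping fails at the first step. However, conditioned on $X_0\in a$, the law of $X_0$ under $u$ is uniform on $a$. Each later state $X_n$, conditioned on its macrostate $b$, is also uniform on $b$, because the transition spreads mass uniformly over the target block. By induction on $n$, the joint law of $(m X_0,\dots,m X_n)$ is then
$$p_{a_0}\prod_k \frac{|\llbracket a_k,a_{k+1}\rrbracket|}{|a_k|},$$
which matches (\ref{mc}).

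This inductive uniformity argument is the main obstacle. I would formalize it by showing that $\Pr[X_n=j\mid X_0,\dots,X_{n-1},\,mX_n=b]=1/|b|$. Once that holds, the maps $m$ and $c$ (applied pathwise via uniform lifting) intertwine the two path measures and give the isomorphism of chains.
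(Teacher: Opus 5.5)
Your Steps 1 and 2, together with the observation $m_*u=p$, $cp=u$ at the start of Step 3, are exactly the paper's proof. The paper notes that $c$ and $m_*$ are mutually inverse, uses Proposition \ref{l5} to get $[\alpha]m_*=m_*\alpha_*cm_*=m_*\alpha_*=m_*c\alpha_*$ on $\mathrm{cProb}(X)$, and then concludes directly from this conjugacy and $p=m_*u$. The theorem's ``isomorphism of chains'' is meant in exactly this sense: conjugate transition maps, with $c\alpha_*$ acting on $\mathrm{cProb}(X)$, plus corresponding initial laws. So your proof is already complete after the first sentence of Step 3.

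The problem is the path-level comparison you call ``the remaining task''. Your lumping computation is correct and a worthwhile extra. Each step of the $X$-chain lands uniformly in the block $m(\alpha X_{n-1})$, so $(mX_n)$ has law $(p,[\alpha])$ (indeed $(q,[\alpha])$ when starting from $cq$). Thus $m$ is a factor map from the $X$-chain onto $(p,[\alpha])$.

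Your closing claim, however, is false. You assert that $c$, applied pathwise by independent uniform lifting, carries the $A$-path measure back to the $X$-path measure and so gives an isomorphism. Take $X=\{1,2,3\}$, $a=\{1,2\}$, $b=\{3\}$, and $\alpha$ the transposition $(1\,3)$. The $X$-chain never moves from $1$ to $1$, because $\alpha(1)\in b$. The uniform lift of $(p,[\alpha])$, by contrast, gives the path $(1,1)$ probability $p_a[\alpha]_{aa}\cdot\frac14=\frac1{12}$.

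Nor can the claim be repaired. For $\alpha=1$ and a single macrostate of size $2$, $(p,[\alpha])$ is the constant process, while the $X$-chain is an i.i.d.\ fair coin. These path processes are not isomorphic: in general their entropy rates are $\sum_a p_aH([\alpha]_{a\cdot})$ and $S(p)$, which here are $0$ and $\mathrm{ln}\,2$. Drop that final sentence, or restate it as the factor statement above; the theorem in the paper's sense does not need it.
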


\begin{proof}[\textbf{Proof}]
Clearly $\ c\ $ and $\ m_*\ $  are inverse of each other. 
The result follows, using Lemma \ref{l5}, from the identities
\begin{equation}\label{uuu}
[\alpha]m_*\ = \ m_*\alpha_*cm_* \ = \ m_*\alpha_* \ = \ m_*c\alpha_*.
\end{equation}
Since $\ [\alpha] \ $ and $\ c\alpha_* \ $ are isomorphic
and $\ p= m_*u, \ $  then $\ (p,[\alpha])\ $  
and $\ (u,c\alpha_*)\ $ are also isomorphic.
\end{proof}

\begin{thm}\label{esl}
{\em Map $\ c\alpha_* \ $ increases Shannon entropy,  i.e. 
\begin{equation}\label{Rsube}
H(q)\ \leq \ H(c\alpha_*q) \ \ \ \ \mbox{for} \ \ \ q\in \mathrm{cProb}(X).  
\end{equation}
Map $\ [\alpha] \ $ increases Shannon entropy plus mean Boltzmann entropy, i.e.  
\begin{equation}\label{Tsube}
H(q)+S(q) \ \leq \ H([\alpha]q)+S([\alpha]q) \ \ \ \ \mbox{for} \ \ \ q\in \mathrm{Prob}(A).  
\end{equation}
}
\end{thm}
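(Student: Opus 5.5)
The plan is to split $c\alpha_*$ into its two factors and handle each separately. Pushforward by the bijection $\alpha$ preserves Shannon entropy, as noted above. Coarse-graining $c:\mathrm{Prob}(X)\to\mathrm{Prob}(X)$ never decreases Shannon entropy. Then (\ref{Tsube}) will follow from (\ref{Rsube}) by transport along the isomorphism of Theorem \ref{c13}, together with identity (\ref{cq}).

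\textbf{Step 1: coarse-graining increases entropy.} For any $r\in\mathrm{Prob}(X)$ we show $H(r)\leq H(cr)$. Since $cr$ is constant on each block $b$, equal to $r(b)/|b|$ there, we have $\sum_{j\in b} r_j\,\mathrm{ln}\,(cr)_j = r(b)\,\mathrm{ln}\big(\tfrac{r(b)}{|b|}\big) = \sum_{j\in b}(cr)_j\,\mathrm{ln}\,(cr)_j$. This gives the identity
$$H(cr)-H(r) \ = \ \sum_{j\in X} r_j\,\mathrm{ln}\Big(\frac{r_j}{(cr)_j}\Big) \ = \ D(r\,\|\,cr)\ \geq\ 0,$$
with the usual convention $0\,\mathrm{ln}\,0=0$. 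Nonnegativity of the Kullback--Leibler divergence (\ref{KL}) is Gibbs' inequality, i.e.\ Jensen applied to the concave logarithm. One point needs care: $(cr)_j=0$ forces $r(b)=0$, hence $r_j=0$, so the divergence is finite and the identity holds. Alternatively one can argue blockwise: on each block, the uniform distribution maximizes the entropy of the conditional distribution, and the chain rule then gives the same inequality.

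\textbf{Step 2: prove (\ref{Rsube}).} For $q\in\mathrm{cProb}(X)$ we have $H(q)=H(\alpha_*q)$, because $\alpha_*q$ is just $q$ with its entries permuted by the bijection $\alpha$. Applying Step 1 to $r=\alpha_*q$ gives $H(\alpha_*q)\leq H(c\alpha_*q)$. Note that the hypothesis $q\in\mathrm{cProb}(X)$ is not actually used here; the inequality holds for every $q\in\mathrm{Prob}(X)$.

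\textbf{Step 3: prove (\ref{Tsube}).} For $q\in\mathrm{Prob}(A)$, identity (\ref{cq}) gives $H(q)+S(q)=H(cq)$. By Theorem \ref{c13} and Proposition \ref{l5}, $c[\alpha]q=c\,m_*\alpha_*cq=c\alpha_*cq$; here $c\,m_*$ is coarse-graining on $\mathrm{Prob}(X)$ followed by the identification of $\mathrm{Prob}(A)$ with $\mathrm{cProb}(X)$. Hence
$$H([\alpha]q)+S([\alpha]q) \ = \ H(c[\alpha]q) \ = \ H(c\alpha_*(cq)) \ \geq \ H(cq) \ = \ H(q)+S(q),$$
where the inequality is (\ref{Rsube}) applied to $cq\in\mathrm{cProb}(X)$.

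The only real content is Step 1, and its main obstacle is bookkeeping: checking that $c$ on $\mathrm{Prob}(X)$ agrees with the composite $c\circ m_*$ used in Theorem \ref{c13}, and handling zero-probability blocks. Everything else is direct substitution.
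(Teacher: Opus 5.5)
Your proposal is correct and follows the paper's first proof: you factor $c\alpha_*$ into $\alpha_*$ (entropy-preserving) followed by coarse-graining (entropy-increasing), then transport the result to $\mathrm{Prob}(A)$ via Theorem \ref{c13} and identity (\ref{cq}). Your identity $H(cr)-H(r)=D(r\,\|\,cr)$ is the same bound as the paper's chain-rule estimate $H(q|_a)\leq \mathrm{ln}|a|$, written differently, since $D(r\,\|\,cr)=\sum_{a}r(a)\big(\mathrm{ln}|a|-H(r|_a)\big)$; the paper's argument likewise does not use the hypothesis $q\in\mathrm{cProb}(X)$, as you observed.
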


\begin{proof}[\textbf{Proof 1}]
It is known that coarse graining increases Shannon entropy, indeed  for $\ q \in \mathrm{Prob}(X) \ $ identity (\ref{cq}) implies  that
$$H(q) \ = \  H(m_*q) \ + \ \sum_{a\in A}q(a)H(q|_a)  \ \leq \
H(m_*q) \ + \ \sum_{a\in A}q(a)\mathrm{ln}|a| 
\ = \ H(cm_*q) \ = \ H(cq) ,$$
where $\ q|_a \ $ is the normalized restriction of $\ q \ $ to $\ m^{-1}a. \ $ Thus we have that
\begin{equation}\label{csube}
H(q)  \ = \ H(\alpha_*q)  \ \leq \  H(c\alpha_*q).
\end{equation}
Let $\ q\in \mathrm{Prob}(A), \ $ by Theorem \ref{c13} and (\ref{cq}) we have that 
$$H(q)+S(q) \ = \ H(cq)\ \leq \ H(c\alpha_*cq) \ = \ H(c[\alpha]q) \ = \ H([\alpha]q)+S([\alpha]q) .$$
\end{proof}

\begin{proof}[\textbf{Proof 2}]  
For any finite set $\ C, \ $ the data processing inequality \cite{ct} tell us that
the Kullback-Leibler divergency $\ D(\ \| \ ):\mathrm{Prob}(C) \times \mathrm{Prob}(C) \rightarrow [0,\infty] \ $ is contractive under stochastic maps. 
We obtain (\ref{Rsube})  as follows  
$$\mathrm{ln}|X| - H(c\alpha_*q) \ = \ D(c\alpha_*q\|u) \ = \ D(c\alpha_*q\|c\alpha_*u)\ \leq \ D(q\|u) 
\ = \  \mathrm{ln}|X| - H(q).$$
We obtain (\ref{Tsube}) as follows 
$$\mathrm{ln}|X| - H([\alpha]q)-S([\alpha]q) \ = \ D([\alpha]q\|p) \ = \ $$ $$D([\alpha]q\|[\alpha]p)\ \leq \ D(q\|p) \ = \  \mathrm{ln}|X| - H(q)-S(q).$$
\end{proof}

The partitions $\ \mathrm{Par}(X) \ $ of a finite set $ \ X \ $ form a bounded lattice with the partial order $\ \sigma \leq \tau \ $ if each block of $\ \sigma \ $ is included in some block of $\ \tau. \ $ Boltzmann entropy
$\ S: \mathrm{Par}(X) \to \mathbb{R}_{\geq 0}  \ $ is increasing in this order, indeed
\begin{equation}\label{incsp}
S(\sigma) \ = \ \sum_{a\in \sigma}\frac{|a|}{|X|}\mathrm{ln}|a| 
\ = \ \sum_{b\in \tau}\bigg(\sum_{a\subseteq b,\ a\in \tau}\frac{|a|}{|X|}\mathrm{ln}|a|\bigg) \ \leq 
\end{equation}
$$\sum_{b\in \tau}\bigg(\sum_{a\subseteq b,\ a\in \tau}\frac{|a|}{|X|}\bigg)\mathrm{ln}|b|
\ = \ \sum_{b\in \tau}\frac{|b|}{|X|}\mathrm{ln}|b| \ = \ S(\tau). $$
The joint $\ \sigma \vee \tau \ $  is the smallest partition bigger than
$\ \sigma \ $ and $\ \tau. \ $ \\

\begin{thm}\label{mct}
{\em Consider system $ \ (X,A,m,\alpha)\  $  and its associated Markov chain with transition map $\ [\alpha]:A \to A. \ $   Let $\ \Gamma \ $ be the graph with vertex set $\ X \ $ such that  there is a $\ \Gamma$-link $\ i \to j \ $ if and only if $\ \alpha i = j \ $ or  $\ i,j\in a\ $ for some $\ a\in A.  $ \\

\noindent a) Macrostate $\ b\in A \ $ is $\ [\alpha]$-reachable from macrostate $\ a\in A \ $
if and only if there is a path in $\ \Gamma \ $ beginning at $\ a \ $ and
ending at $\ b. \ $  A macrostate is $\ [\alpha]$-absorbing if and only if it is a union of $\ \alpha$-cycles. \\

\noindent b) There are no $\ [\alpha]$-transient macrostates, i.e. if there is a
$\ \Gamma$-path from $\ a\ $ to $\ b,\ $ then there is also
a $\ \Gamma$-path from $\ b\ $ to $\ a.\ $ \\

\noindent c) The partition in $\ [\alpha]$-communication classes equals the joint of the partition into $\ \alpha$-cycles, and the partition determined $\ m, \ $ i.e. $\ \mathrm{Comm}_{[\alpha]} \ = \ \mathrm{Cyc}_{\alpha} \vee \pi.\ $
Furthermore we have that 
\begin{equation}\label{dis1}
(X,A,m,\alpha)\ = \  
\coprod_{C \in \mathrm{Comm}_{[\alpha]} }\big(\widehat{C},C,m,\alpha \big) \ \ \ \ \ \mbox{where} \ \ \ \ \widehat{C}=\coprod_{a\in C}a.
\end{equation} 
The restriction $\ [\alpha]_C \ $ of $\ [\alpha] \ $ to each $ \ C \in \mathrm{Comm}_{[\alpha]} \ $ is irreducible.\\

\noindent d) $ [\alpha]_C \ $ is aperiodic if the greatest common divisor of $\ \Gamma$-cycles  lengths is $\ 1. \ $  In this case,  for any $\ q \in \mathrm{Prob}(A)\ $
and $\ a \in C\ $ we have that
\begin{equation}\label{cac}
\big[\lim_{n \to \infty} [\alpha]^nq_C\big]_a \ = \  q(C)\frac{|a|}{|\widehat{C}|},
\end{equation} 
where measure $\ q_C\in \mathrm{Meas}(A)\ $ is equal to $\ q\ $ on $\ C, \ $ and to $\ 0 \ $ on $\ A\setminus C.$ \\

\noindent e) $ [\alpha]_C \ $ has period $\ d \ $ if the greatest common divisor of 
$\ \Gamma$-cycles lengths is $\ d. \ $
In this case, $\ C\ $ decomposes as $\ C=\coprod_{i=1}^d D_i ,\ $
with macrostates moving cyclically from $\ D_i\ $ to $\ D_{i+1}\ $ under $\ [\alpha]_C. \ $ For $\ q \in \mathrm{Prob}(A) \ $ and $\ a \in D_i \ $  
we have 
\begin{equation}\label{cacpe}
\big[\lim_{n \to \infty} [\alpha]^{i+nd}q_{D_i}\big]_a \ = \  q(D_i)\frac{|a|}{|\widehat{D_i}|}
 \end{equation} 
 where measure $\ q_{D_i}\in \mathrm{Meas}(A)\ $ is equal to $\ q\ $ on $\ D_i, \ $ and to $\ 0 \ $ on $\ A\setminus D_i.$}
\end{thm}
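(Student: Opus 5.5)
The plan is to translate everything into the graph $\Gamma$ and exploit the fact that $\alpha$ is a bijection of a finite set. \textbf{Step 1, part a).} We have $[\alpha]_{ab}>0$ if and only if $\llbracket a,b\rrbracket\neq\emptyset$. This holds exactly when some $i\in a$ has $\alpha i\in b$, i.e. when there is a $\Gamma$-path from $a$ to $b$ consisting of intra-block links followed by one $\alpha$-link. Concatenating such steps gives reachability in $[\alpha]$ by induction on the number of steps. Conversely, any $\Gamma$-path from a point of $a$ to a point of $b$ can be cut at its $\alpha$-links; the intra-block links do not change the macrostate, so each $\alpha$-link $i\to\alpha i$ with $i\in a'$, $\alpha i\in b'$ is a positive transition $a'\to b'$. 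A path with no $\alpha$-link stays in $a$, and then $b=a$, which I take as reachable in zero steps.

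For absorbing macrostates: $[\alpha]_{aa}=1$ iff $\alpha a\subseteq a$. Since $\alpha$ is injective and $a$ is finite, this means $\alpha a=a$, which is equivalent to $a$ being a union of $\alpha$-cycles.

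\textbf{Step 2, part b).} This is the key point: every $\Gamma$-link can be reversed by a $\Gamma$-path. Intra-block links are symmetric. An $\alpha$-link $i\to\alpha i$ is reversed by following $\alpha$ around its cycle, $\alpha i\to\alpha^2 i\to\cdots\to\alpha^{\ell}i=i$, which uses finiteness and invertibility. Hence $\Gamma$-reachability is symmetric, and by a) there are no transient macrostates.

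\textbf{Step 3, part c).} By b), communication classes are the connected components of the undirected version of $\Gamma$, pulled back to $A$. A component is the equivalence class generated by ``same $\alpha$-cycle'' and ``same block'', which is by definition the join $\mathrm{Cyc}_\alpha\vee\pi$. Each such $\widehat C$ is $\alpha$-invariant, being a union of cycles, and $m$-saturated, being a union of blocks. This gives the disjoint union decomposition (\ref{dis1}), with irreducibility immediate from the definition of a communication class.

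\textbf{Step 4, parts d) and e).} Restricted to $C$, the chain is finite and irreducible, and its stationary distribution is $|a|/|\widehat C|$ by the stationarity computation of Proposition \ref{l5} applied to the subsystem $(\widehat C,C,m,\alpha)$. The main obstacle is relating the period of $[\alpha]_C$ to the gcd of $\Gamma$-cycle lengths. Here I count only the $\alpha$-links in a $\Gamma$-cycle as steps, since a cycle in $A$ for the chain corresponds to a closed $\Gamma$-walk whose number of $\alpha$-links equals the number of chain steps. With that length convention, the period is the gcd of the return times, and the two gcds coincide.

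Given this, the standard convergence theorem for irreducible aperiodic finite chains yields (\ref{cac}): the mass $q(C)$ stays in $C$ by invariance and distributes according to the stationary law. In the periodic case, we take the cyclic classes $D_1,\dots,D_d$ and apply the aperiodic result to $[\alpha]^d$ restricted to each $D_i$. Its stationary law is proportional to $|a|$, again by restricting $p$. This gives (\ref{cacpe}), after checking that the mass $q(D_i)$ is transported along the cyclic classes with the index shift as written.
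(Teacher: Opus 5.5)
Your treatment of a)--c) follows the paper's argument: a positive entry $[\alpha]_{ab}$ is read as intra-block moves followed by one $\alpha$-link, an $\alpha$-link is reversed by running around its cycle, and communication classes are the components of $\Gamma$, i.e.\ $\mathrm{Cyc}_\alpha\vee\pi$. For d)--e) the paper only cites the standard theorems for finite chains together with stationarity of $p$, so your extra detail there is useful.

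Counting only $\alpha$-links is not an optional convention: it is needed for d)--e) to be true. Under the literal edge count, the gcd is always $1$. This is forced by the loops $i\to i$, which the paper itself uses in a). Even without loops, any block with three or more points contains odd intra-block cycles. Yet $X=\{1,2\}$ with singleton macrostates and $\alpha=(12)$ gives a chain of period $2$. Say this explicitly, and treat $\Gamma$ as having labelled edges, since a link $i\to\alpha i$ inside a single block can be counted either way.

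\textbf{The gap is your final sentence.} The check ``that the mass $q(D_i)$ is transported along the cyclic classes with the index shift as written'' fails. The map $[\alpha]$ carries measures on $D_j$ to measures on $D_{j+1}$, so $[\alpha]^{i+nd}q_{D_i}$ is supported on $D_{2i}$ (indices mod $d$). Hence for $a\in D_i$ with $d\nmid i$, the left side of (\ref{cacpe}) is $0$. The right side is positive as soon as $q(D_i)>0$, so the formula is false as stated.

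What your argument actually proves (the aperiodic result applied to $[\alpha]^d$ on $D_i$, with stationary law $p_{D_i}/p(D_i)$) is
\[
\Big[\lim_{n\to\infty}[\alpha]^{nd}q_{D_i}\Big]_a \;=\; q(D_i)\,\frac{|a|}{|\widehat{D_i}|}\qquad (a\in D_i).
\]
Applying $[\alpha]^{j}$ afterwards, the limit of $[\alpha]^{j+nd}q_{D_i}$ at $a\in D_{i+j}$ equals $q(D_i)|a|/|\widehat{D_{i+j}}|$.

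Moreover, every microstate of $\widehat{D_j}$ is sent by $\alpha$ into $\widehat{D_{j+1}}$. Bijectivity then forces $\alpha\widehat{D_j}=\widehat{D_{j+1}}$, so $|\widehat{D_j}|=|\widehat C|/d$ for all $j$. Consequently the right-hand side of (\ref{cacpe}) is correct, but it holds for $a\in D_{2i}$ rather than $a\in D_i$; equivalently, for $a\in D_i$ it holds with exponent $nd$. You should have caught and corrected this index error rather than asserting that the check goes through.
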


\begin{proof}[\textbf{Proof}]
a) $ b \in A\ $ is reachable from $\ a \in A \ $ if there is $\ n\geq 0\ $ such
that $\ [\alpha]^{n}_{ab}>0.  \ $ If $\ n=0, \ $ we must have $\ a=b, \ $ and we have
link $\ i\to i\ $ in $\ \Gamma \ $ for $\ i\in a. \ $
If $\ n>0, \ $ from (\ref{mc}) we get that there are microstates $\ i_s \in a_s \ $ such that $\ \alpha(i_s)\in a_{s+1}\ $ where $\ 0\leq s \leq n-1,\ $ $\ a_s\in A, \ $ $\ a_0=a, \ $ and $\ a_n=b.\ $ Thus we have path 
$\ i_0\to \alpha(i_0) \to i_1\to \alpha(i_1)\to 
\cdots \to i_{n\mathrm{-}1}\to \alpha (i_{n\mathrm{-}1})\ $ in $\ \Gamma \ $
with $\ i_0 \in a\ $ and $\ \alpha (i_{n\mathrm{-}1}) \in b. \ $ 

The converse follows by induction. Suppose we have a $\ \Gamma$-path from $\ a \ $ to $\ b \ $ of length $\ n+1.\ $ If the last step is a link
$\ i\to j \ $ between two macrostates in $\ b,\ $  then by induction
 $\ [\alpha]_{a,b}^p >0 \ $ for some $\ p\geq 0. \ $ If the last link
is of the form $\ i \to \alpha i \ $ with $\ i \in c, \ $ then by induction
there is $\ [\alpha]_{a,c}^p >0 \ $ for some $\ p\geq 0, \ $ and
$\ [\alpha]_{c,b}>0,\ $ thus $\ [\alpha]_{a,b}^{p+1} >0. \ $ 

Macrostate $\ a \in A \ $ is absorbing if $\ [\alpha]_{ab}=0 \ $ for $\ b\neq a, \ $
i.e. $\ \alpha a \subseteq a. \ $ So the $\ \alpha$-cycle
of $\ i\in a \ $ is included in $\ a,\ $  thus $\ a \ $ is a union of   $\ \alpha$-cycles.\\

\noindent b) We use induction on the length $\ n\ $ of 
$\ \Gamma$-paths from $\ a \ $ to $\ b. \ $  For $\ n=0 \ $ we again get $\ a=b. \ $
Assume we have $\ \Gamma$-path 
$\ i_0\to i_1\to 
\cdots \to i_{n-1} \to i_{n}\ $  with
$\ n>0. \ $ By induction we have $\ \Gamma$-path in   from
$\ i_{n-1}\ $ to $\ i_{0},\ $ so it remains to find a path from
$\ i_{n}\ $ to $\ i_{n-1}.\ $ If $\ i_{n},\ i_{n-1}\ $ belong to the same macrostate, then there is a $\Gamma$-link from $\ i_{n}\ $ to $\ i_{n-1},\ $ otherwise
$\ \alpha (i_{n-1})=i_{n}\ $ and there is $\ k\geq 1 \ $ such that
$\ \alpha^k (i_{n})=i_{n-1},\ $ and thus we have $\Gamma$-path 
$\ i_n\to \alpha(i_n) \to 
\cdots \to \alpha^k (i_{n})=i_{n-1}.\ $  \\

\noindent c) Macrostates $\ a,\ b \ $ belong the same block in $\ \mathrm{Comm}_{[\alpha]}  \ $ if and only if there is a $\Gamma$-path from $\ i \in a\ $ 
to $\ j\in b.\ $ The result follows since each step in such a path
is of the form $\ s\to \alpha (s),\ $ linking two elements in a $\ \alpha$-cycle,
or of the form $\ s \to t\ $  with $\ s,\ t \ $ in some block of $\ \pi.$ \\

\noindent d) and e) follow from the fundamental theorems for finite Markov chains \cite{no}, and the fact that distribution $\ \displaystyle p_a=\frac{|a|}{|X|}\ $ on $\ A \ $ is stationary under $\ [\alpha].$
\end{proof}

\begin{thm}\label{revv}
{\em Let $ \ (X,A,f,\alpha, r)\  $  be a reversible system.\\

\noindent a) For non-empty subsets $\ a,b \subseteq X \ $ we have that
\begin{equation}\label{trev}
|\llbracket a,b\rrbracket |\ = \ |\llbracket rb,ra\rrbracket | \ \ \ \mbox{and \ thus} \ \ \  [\alpha]_{ab} \ = \ e^{S(b)-S(a)}[\alpha]_{rb,ra}.
\end{equation}
b)  $ (p,[\alpha^{-1}]) \ $ is the reversed Markov chain of $\ (p,[\alpha]). \ $ \\

\noindent c) Markov chain $\ (p,[\alpha]) \ $ is reversible if and only if
$\ |\llbracket a,b \rrbracket |=|\llbracket b,a \rrbracket |  \ $ for $\ a,b \in A.$ \\

\noindent d) If $\ r \ $ is invariant, then Markov chain $\ (p,[\alpha])\ $ is a reversible.   
}
\end{thm}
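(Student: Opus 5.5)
For part a) I will build an explicit bijection $\llbracket a,b\rrbracket \to \llbracket rb,ra\rrbracket$, namely $i\mapsto r\alpha i$. Take $i\in a$ with $\alpha i\in b$ and set $j=r\alpha i$. Then $j\in rb$, and by the reversion relation $\alpha^{-1}=r\alpha r$ we get $\alpha j=\alpha r\alpha i = r\alpha^{-1}\alpha i = ri\in ra$. So $j\in\llbracket rb,ra\rrbracket$. The same computation run backwards shows that $j\mapsto r\alpha j$ maps $\llbracket rb,ra\rrbracket$ back into $\llbracket a,b\rrbracket$. It is inverse to the first map because $r\alpha r\alpha=\alpha^{-1}\alpha=1$, using that $\alpha^{-1}=r\alpha r$ forces $(r\alpha)^2=1$. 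Since $r$ is a bijection, $|ra|=|a|$ and $|rb|=|b|$. Dividing the cardinality identity by $|a|$ then gives
$$[\alpha]_{ab}=\frac{|\llbracket rb,ra\rrbracket|}{|rb|}\cdot\frac{|b|}{|a|}=e^{S(b)-S(a)}[\alpha]_{rb,ra}.$$
Here $rb$ and $ra$ are arbitrary non-empty subsets, which is permitted by the remark after (\ref{T}).

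For part b) I will check the defining property of the time-reversed chain of a stationary chain, namely $p_b[\alpha^{-1}]_{ba}=p_a[\alpha]_{ab}$. Both sides equal $|X|^{-1}$ times a count of microstates: the right side counts $|a\cap\alpha^{-1}b|$ and the left side counts $|b\cap\alpha a|$. The bijection $\alpha$ carries the first set onto the second, so the counts agree. Stationarity of $p$ was already shown in Proposition \ref{l5}, and the same argument applied to $\alpha^{-1}$ gives stationarity for the reversed chain as well.

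For part c), reversibility of $(p,[\alpha])$ means detailed balance $p_a[\alpha]_{ab}=p_b[\alpha]_{ba}$. Multiplying out, this is exactly $|\llbracket a,b\rrbracket|=|\llbracket b,a\rrbracket|$, so c) is immediate. For part d), if $r$ is invariant then $ra=a$ and $rb=b$ as subsets, since $r$ preserves each fibre of $m$. Part a) then gives $|\llbracket a,b\rrbracket|=|\llbracket b,a\rrbracket|$, and c) applies.

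The only real point of care is the algebra in a): verifying that $r\alpha$ is an involution and that it maps the transition sets correctly in both directions. Everything else is bookkeeping with the formula (\ref{T}).
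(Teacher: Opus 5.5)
Your proof is correct and takes essentially the same route as the paper. In a) you use the involution $r\alpha$ as the bijection $\llbracket a,b\rrbracket \leftrightarrows \llbracket rb,ra\rrbracket$; the paper calls this map ``idempotent'', but involution is the accurate word. Your b) is the paper's count $|a\cap\alpha^{-1}b|=|\alpha a\cap b|$ identifying the reversed chain with $[\alpha^{-1}]$, and your c) and d) match the paper's detailed-balance argument together with $ra=a$, $rb=b$ for invariant $r$.
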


\

\begin{proof}[\textbf{Proof}] 
a) The idempotent map $\ r\alpha \ $ induces bijective maps 
$$ r\alpha : \llbracket a,b \rrbracket \leftrightarrows \llbracket rb, ra \rrbracket : r\alpha.$$
So we have that
$$[\alpha]_{a,b} \ = \ \frac{|\llbracket a,b \rrbracket}{|a|}\ = \ \frac{|b|}{|a|}\frac{|\llbracket a,b \rrbracket|}{|b|}
\ = \ \frac{|b|}{|a|}\frac{|\llbracket rb,ra \rrbracket  |}{|rb|}
\ = \  \frac{|b|}{|a|}[\alpha]_{rb,ra} \ = \ e^{S(b)-S(a)}[\alpha]_{rb,ra}.$$
b) By definition the reversed Markov chain $\ (p,[\alpha]^r) \ $ of $\ (p,[\alpha])\ $ is given by $$[\alpha]^r_{a,b}\ = \ \frac{p_b}{p_a}[\alpha]_{b,a} \ = \ \frac{|b|}{|a|}[\alpha]_{b,a} \ = \ 
\frac{|b|}{|a|}\frac{|\alpha(b)\cap a|}{|b|} \ = \
\frac{|\alpha^{-1}(a)\cap b|}{|a|} \ = \ [\alpha^{-1}]_{a,b}.  $$ 
c) $(p,[\alpha]) \ $ is reversible if and only if  $\ [\alpha]^r_{a,b}=[\alpha]_{a,b}, \ $ i.e. if and only if
$$|\llbracket a,b \rrbracket| \ = \ |a|[\alpha]_{ab}=|b|[\alpha]_{b,a} \ = \ |\llbracket b,a \rrbracket|.$$  

\noindent d) Follows from item a) and c).
\end{proof}

Theorem \ref{revv}-a generalizes Proposition \ref{p10} which covers the case
$$\ [\alpha]_{a,b} \ = \ [\alpha]_{rb,ra} \ = \ 1.\ $$  The processes studied in this section may be thought of as Markov approximations to the stochastic processes directly relevant to addressing the second law. The study of these sorts of processes goes back to Kolmogorov in the context of ergodic theory \cite{aa, ct}, and will be our focus in the next section.

\section{Non-Markovian Stochastic Process}\label{nmsp}

Given system $ \ (X,A,m,\alpha)\  $ and $\ a_0, \dots , a_n \in A \ $ set  
\begin{equation}
\llbracket a_0, \dots, a_n \rrbracket \ = \ \{i\in a_0 \ | \ \alpha^t (i) \in  a_t \ \ \ \mbox{for}
\ \ \ 0\leq t \leq n \} \ \subseteq \ a_0.
\label{eq:path}
\end{equation}
This notation makes sense for arbitrary subsets of $\ X. \ $ The probability that a microstate in $\ a_0\ $ moves consequently to macrostates
$\ a_1, \dots, a_n \ $ is given by
\begin{equation}
[\alpha]_{a_0, \dots, a_n}\ = \ \frac{|\llbracket a_0, \dots, a_n
\rrbracket | }{|a_0|}
\label{eq:path}
\end{equation} 

\begin{thm}\label{psp}{\em  Consider system $ \ (X,A,m,\alpha)\ $ with return time $\ d. \ $  For each $\ q\in \mathrm{Prob}(A) \ $ we have stochastic process  $\ (q,[\alpha]_{\bullet}) \ $  on $\ A\ $  with probability law
on $\ A^{\mathbb{N}}\ $ given by 
\begin{equation}\label{mc2}
\mathrm{Pr}[X_0=a_0 , \dots ,X_n= a_n] \ = \ 
q_{a_0} [\alpha]_{a_0,\dots,a_n}\ = \ 
q_{a_0}  \frac{|\llbracket a_0,\dots,a_n \rrbracket |}{|a_{0}|}. 
\end{equation} 
a) Kolmogorov consistency axioms hold. \\

\noindent b) $\mathrm{Pr}[X_0= b] =  q_b\ \  $ and $  \ \ \mathrm{Pr}[X_n= b] = \big([\alpha^{n}]q\big)_b\ \ $ for $\ \ n\geq 1. \ $
Therefore we have that $\ \mathrm{Pr}[X_{i+nd}=b ]\ = \ \mathrm{Pr}[X_{i}=b ]. \ $ \\

\noindent c) $\displaystyle \mathrm{Pr}[X_{n+1}=a_{n+1} \ | \ X_0=a_0 , \dots ,X_n= a_n] \ = \ 
 \frac{|\llbracket a_0,\dots,a_n,a_{n+1} \rrbracket|}{|\llbracket a_0,\dots,a_n \rrbracket |}.$\\
 
\noindent d) Process $\ (q,[\alpha]_{\bullet}) \ $  has period $\ d. \ $
For $\ d=1\ $ it is a Markov process.
For $\ d \geq 2\ $ it is a Markov process if and only
if $\ \alpha \ $ is equivariant, i.e. $\ \alpha \ $ descends to a bijection
$\ \alpha:A \to A \ $ such that $\ m\alpha = \alpha m.$ \\

\noindent e) Process $\ (p,[\alpha]_{\bullet}) \ $ is strictly stationary. \\ 

}
\end{thm}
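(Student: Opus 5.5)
The plan is to compute everything directly from the sets $\llbracket a_0,\dots,a_n\rrbracket$, using the identity
$$\llbracket a_0,\dots,a_n\rrbracket \ = \ \bigcap_{t=0}^{n}\alpha^{-t}(a_t)$$
and the fact that the macrostates partition $X$. I take the return time $d$ to mean that $\alpha^d=1$.

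\textbf{Items a), b), c).} For a), the key observation is that $\alpha^{n+1}i$ lies in exactly one macrostate. Hence $\llbracket a_0,\dots,a_n\rrbracket=\bigsqcup_{a_{n+1}\in A}\llbracket a_0,\dots,a_n,a_{n+1}\rrbracket$, so summing (\ref{mc2}) over the last coordinate gives the law on $n+1$ coordinates. Normalization follows from $\sum_{a_0}q_{a_0}|a_0|/|a_0|=1$. For b), I marginalize over $a_1,\dots,a_{n-1}$ in the same way, which gives
$$\mathrm{Pr}[X_n=b] \ = \ \sum_{a_0}q_{a_0}\frac{|a_0\cap\alpha^{-n}b|}{|a_0|} \ = \ \big([\alpha^n]q\big)_b$$
by (\ref{T}) applied to the bijection $\alpha^n$. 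Note that this is $[\alpha^n]$, not $[\alpha]^n$. Periodicity then follows from $\alpha^{i+nd}=\alpha^i$. Item c) is the definition of conditional probability: the factor $q_{a_0}/|a_0|$ cancels between numerator and denominator.

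\textbf{Item e), and the period in d).} For e), with $q=p$, I first marginalize over the first $k$ coordinates:
$$\mathrm{Pr}[X_k=a_0,\dots,X_{k+n}=a_n] \ = \ \frac{|\alpha^{-k}\llbracket a_0,\dots,a_n\rrbracket|}{|X|} \ = \ \frac{|\llbracket a_0,\dots,a_n\rrbracket|}{|X|},$$
and this is independent of $k$ because $\alpha$ is a bijection. The same marginalization, combined with $\alpha^d=1$, shows that the finite-dimensional laws are invariant under the shift by $d$ in the sense of period $d$. Concretely, $\llbracket a_0,\dots,a_{n}\rrbracket$ is empty unless $a_{t+d}=a_t$ for all $t$ (both contain $\alpha^t i=\alpha^{t+d}i$), so $X_{t+d}=X_t$ almost surely. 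When $d=1$ we have $\alpha=1$, the process is constant, and hence it is trivially Markov.

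\textbf{Markov property for $d\geq 2$.} If $\alpha$ is equivariant, then $\llbracket a_0,\dots,a_n\rrbracket$ equals $a_0$ when $a_t=\alpha^t a_0$ and is empty otherwise. The process is therefore deterministic given $X_0$, hence Markov. The converse is the step I expect to be the main obstacle. My plan is as follows. Assume Markov, and take $q$ with full support (e.g. $q=p$; for degenerate $q$ the statement has to be read on the support). Markov means $X_{t+1}$ depends on the past only through $X_t$, with one-step kernels $P_t$ given by c) at $n=0$ shifted, i.e. $P_t(a,b)=\mathrm{Pr}[X_{t+1}=b\,|\,X_t=a]$. Since $X_d=X_0$ almost surely, the product $P_0P_1\cdots P_{d-1}$ restricted to the support is the identity. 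A product of stochastic matrices that equals the identity forces each factor to be a permutation matrix on the relevant supports; I would verify this by an induction on $d$, using that a stochastic row mapping to a point mass must be concentrated. In particular $P_0=[\alpha]$ has entries in $\{0,1\}$. So $\alpha a$ lies inside a single macrostate $b$; bijectivity of $\alpha$ together with $[\alpha]$ being a permutation on $A$ then gives $|a|\le|b|$ around every cycle, hence $\alpha a=b$. This is exactly equivariance.
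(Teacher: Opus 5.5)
Your proposal is correct and follows essentially the paper's route: every item comes from marginalizing the sets $\llbracket a_0,\dots,a_n\rrbracket$, and for the Markov converse you factor $\mathrm{Pr}[X_d=c\mid X_0=a]=\delta_{ac}$ into stochastic matrices to force $[\alpha]$ to be a permutation matrix, as the paper does via Chapman--Kolmogorov. You also make explicit two steps the paper leaves implicit, namely the full-support hypothesis on $q$ and the cardinality argument upgrading $\alpha a\subseteq b$ to $\alpha a=b$. One correction to your side remark: for degenerate $q$ the converse genuinely fails, so reading the statement on the support does not rescue it. For example, $X=\{1,2,3\}$, $A=\{\{1\},\{2,3\}\}$, $\alpha=(1\,2)$, $q=\delta_{\{1\}}$ gives the deterministic alternating, hence Markov, process $\{1\},\{2,3\},\{1\},\dots$, although $\alpha\{2,3\}=\{1,3\}$ is not a macrostate; so full support of $q$ is a real hypothesis.
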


\begin{proof}[\textbf{Proof}]
a) Follows from the identities
$$\sum_{a_{k+1},\dots,a_{n} \in A}\mathrm{Pr}[X_0=a_0 , \dots ,X_k=a_k,X_{k+1}
\ = \ a_{k+1} \dots X_n= a_n]\ \ = $$
$$ \sum_{a_0 \in A}q_{a_0}  \frac{|[a_0,\dots,a_k,\coprod_{a_{k+1} \in A} a_{k+1},\dots,\coprod_{a_{n} \in A} a_{n}]|}{|a_{0}|}\ \ = \ \
\sum_{a_0 \in A}q_{a_0}  \frac{|\llbracket a_0,\dots,a_k,X,\dots,X
\rrbracket |}{|a_{0}|} \ \ =$$ 
$$\sum_{a_0 \in A}q_{a_0}  \frac{|\llbracket a_0,\dots,a_k \rrbracket |}{|a_{0}|}
\ \ = \ \ \mathrm{Pr}[X_0=a_0 , \dots ,X_k=a_k]. $$
\noindent b) Marginal distributions of $\ (q,T_{\bullet}) \ $ are given
for $\ n\geq 1 \ $ by
$$\mathrm{Pr}[X_n= b]  \ = \ 
\sum_{a_0,\dots,a_{n-1} \in A}q_{a_0}  \frac{|\llbracket a_0,a_1, \cdots,a_{n-1},a_n \rrbracket|}{|a_{0}|}\ \ =  $$
$$ \sum_{a_0 \in A}q_{a_0}  \frac{|\llbracket a_0,X, \dots, X, b
\rrbracket |}{|a_{0}|}\ \ = \ \
\sum_{a_0 \in A}q_{a_0}[\alpha^{n}]_{a_0,b}\ \ = \ \ \big([\alpha^{n}]q\big)_b . \ $$

\noindent c) Follows from directly from identity (\ref{mc2}). \\

\noindent d) We show $\ d$-periodicity. Let $\ n \geq d, \ \ $ 
if $\ \ a_j=a_i\ \ $  for $\ \ j=i \ \  \mathrm{mod} \ \ d, \ $  then 
$$\mathrm{Pr}[X_0=a_0 , \dots ,X_n= a_n] \ = \
\mathrm{Pr}[X_0=a_0 , \dots ,X_{d-1}= a_{d-1}], $$  
otherwise $\ \mathrm{Pr}[X_0=a_0 , \dots ,X_n= a_n]\ = \ 0.\ $ Indeed 
$\ |\llbracket a_0 , \dots ,a_n \rrbracket | \ = \ 
|\llbracket a_0 , \dots ,a_{d-1} \rrbracket |\ $ if $\ a_j=a_i\ $ for $\ j=i\ $ mod $\ d, \ $ and 
otherwise $\ \llbracket a_0, \dots ,a_n \rrbracket =\emptyset.\ $ \\

\noindent Let $\ d=1, \ $ then $\ \alpha \ $ is the identity map, and $\ (q,[\alpha]_{\bullet}) \ $ is equal to the Markov process $\ (q,\delta)\ $ where the transition matrix
$\ \delta \ $ is the Kronecker delta.  Let $\ d=2 \ $ 
and assume that $\ (q,T_{\bullet}) \ $  is Markov. The Chapman-Kolmogorov equation give
$$\delta_{a,c}\ = \ \mathrm{Pr}[X_{d}=c \ | \ X_{0}= a] \ = \ 
\sum_{b\in A}\ \mathrm{Pr}[X_{d}=c \ | \ X_{1}= b] \ \mathrm{Pr}[X_{1}=b \ | \ X_{0}= a].$$
Matrix $\ [\alpha]_{a,b}=\mathrm{Pr}[X_{1}=b \ | \ X_{0}= a]\ $ and its inverse
$\ [\alpha]^{-1}_{a,b}=\mathrm{Pr}[X_{d}=b \ | \ X_{1}= a] \ $ are both stochastic, so $\ [\alpha]_{a,b} \ $ is necessarily a permutation matrix, i.e.
$\ \alpha\ $ descends to a permutation on $\ A.\ $ Conversely, if $ \ \alpha \ $
descends to a permutation on $\ A,\ $ then process $\ (q,[\alpha]_{\bullet}) \ $ is equal to the Markov process $\ (q,\delta_{\alpha(a),b}). \ $   \\

\noindent e) Consider the stochastic process 
$\ (p,[\alpha]_{\bullet}), \ $ for $\ k \geq 1\ $ we have that 
$$\mathrm{Pr}[X_{k}=a_0 , \dots ,X_{n+k}= a_n] \ = \ 
\sum_{b_0,\dots,b_{k-1}\in A}
\frac{|\llbracket b_0,\dots,b_{k-1},a_0,\dots,a_n \rrbracket |}{|X|}\ =
$$ 
$$\frac{|\llbracket X,\dots,X,a_0,\dots,a_n \rrbracket |}{|X|} \ = \ 
\frac{|\llbracket a_0,\dots,a_n \rrbracket |}{|X|} \ = \ \mathrm{Pr}[X_{0}=a_0 , \dots ,X_{n}= a_n].$$
\end{proof}

\begin{rem}
{\em Consider the stochastic process on $\ A^{\mathbb{N}}\ $ with
variables $\ X_n \ $ independent and identically distributed as
$\ \mathrm{Pr}[X_n=a] = p_a,\ $ thus  
 \begin{equation}\label{mc20}
\mathrm{Pr}[X_0=a_0 , \dots ,X_n= a_n] \ = \ 
p_{a_0} \cdots p_{a_n}. 
\end{equation} 
This is a Markov process with transition matrix $\ T_{ab} = p_b. \ $
The rows of $\ T\ $ are equal, therefore $\ T \ $ is a permutation matrix if and only if $\ |A|=1.\ $ So by Theorem \ref{psp}-d, if $\ |A|\neq 1,\ $ there is no system $\ (X,A,m,\alpha)\ $ such that $\ (p, [\alpha]_{\bullet})\ $ reproduces the stochastic process (\ref{mc20}). 
}
\end{rem}

\begin{thm}\label{revv}
{\em Let $ \ (X,A,f,\alpha, r)\  $  be a reversible system.\\

\noindent a) For non-empty subsets $\ a_0,\dots, a_n \subseteq X \ $ we have that
\begin{equation}\label{trev2}
|\llbracket a_0,\dots, a_n \rrbracket |\ = \ 
|\llbracket ra_n,\dots, ra_0 \rrbracket| \ \ \ \ \mbox { and thus} $$
$$[\alpha]_{a_0,\dots,a_n} \ = \ e^{S(a_n)-S(a_0)}[\alpha]_{ra_n,\cdots, ra_0}.
\end{equation}
b)  $ (p,[\alpha^{-1}]_{\bullet}) \ $ is the reversed stochastic process of $\ (p,[\alpha]_{\bullet}). \ $ \\

\noindent c) Stochastic process $\ (p,[\alpha]_{\bullet}) \ $ is reversible if and only if 
$$\ |\llbracket a_0,\dots,a_n \rrbracket |\ = \
 | \llbracket a_n,\dots,a_0 \rrbracket | \  \ \ \ \mbox{for} \ \ \ a_0,\dots,a_n \in A.$$

\noindent d) If $\ r \ $ is invariant, then the stochastic process
$\ (p,[\alpha]_{\bullet})\ $ is a reversible.   
}
\end{thm}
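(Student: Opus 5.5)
The plan is to mirror the proof of the two-step version of this theorem given in Section \ref{icg8}, replacing the map $r\alpha$ on $\llbracket a,b\rrbracket$ by $r\alpha^n$ on path sets.

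For a), I will show that $r\alpha^n$ restricts to a bijection
$$ r\alpha^n:\llbracket a_0,\dots,a_n\rrbracket \to \llbracket ra_n,\dots,ra_0\rrbracket .$$
Take $i\in\llbracket a_0,\dots,a_n\rrbracket$ and set $j=r\alpha^n i$. From $\alpha^{-1}=r\alpha r$ and $r^2=1$ we get $\alpha r = r\alpha^{-1}$, hence $\alpha^t r = r\alpha^{-t}$ for every $t$. Therefore
$$\alpha^t j \ = \ r\alpha^{-t}\alpha^n i \ = \ r\alpha^{n-t}i \ \in \ ra_{n-t}\qquad (0\leq t\leq n),$$
so $j$ lies in the target set. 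Running the same computation in the other direction shows that $r\alpha^n$ also maps $\llbracket ra_n,\dots,ra_0\rrbracket$ into $\llbracket a_0,\dots,a_n\rrbracket$. Moreover $r\alpha^n$ is an involution, since $(r\alpha^n)^2=r\alpha^n r\alpha^n = \alpha^{-n}\alpha^n=1$. This gives the cardinality identity.

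The formula for $[\alpha]_{a_0,\dots,a_n}$ then follows by dividing by $|a_0|$, writing $\frac{1}{|a_0|}=\frac{|a_n|}{|a_0|}\cdot\frac{1}{|ra_n|}$, and using $|ra_n|=|a_n|$, which holds because $r$ is a bijection.

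For b), the reversed process of $(p,[\alpha]_\bullet)$ is defined through the time-reversed cylinder probabilities $\mathrm{Pr}^r[X_0=a_0,\dots,X_n=a_n]=\mathrm{Pr}[X_0=a_n,\dots,X_n=a_0]$; this is legitimate because the process is strictly stationary by Theorem \ref{psp}-e. I then compute
$$\frac{|\llbracket a_n,\dots,a_0\rrbracket_\alpha|}{|X|}.$$
The map $\alpha^n$ is a bijection from $\llbracket a_n,\dots,a_0\rrbracket_\alpha$ onto $\llbracket a_0,\dots,a_n\rrbracket_{\alpha^{-1}}$: for $i$ in the first set, $\alpha^{-t}(\alpha^n i)=\alpha^{n-t}i\in a_t$, and $\alpha^{-n}$ gives the inverse map. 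This identifies the reversed law with the law of $(p,[\alpha^{-1}]_\bullet)$, since both have density $\frac{|\cdot|}{|X|}$. This part does not need $r$.

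For c), reversibility means the process equals its reversal, i.e. $\mathrm{Pr}[X_0=a_0,\dots,X_n=a_n]=\mathrm{Pr}[X_0=a_n,\dots,X_n=a_0]$ for all $n$ and all $a_0,\dots,a_n$. Multiplying by $|X|$ turns this into exactly the stated counting condition.

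For d), invariance of $r$ gives $ra_t=a_t$, so a) yields $|\llbracket a_0,\dots,a_n\rrbracket|=|\llbracket a_n,\dots,a_0\rrbracket|$, and c) applies.

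The only delicate point is the bookkeeping in b): the definition of reversed process must use stationarity, otherwise the reversal is not well defined. I will state that definition explicitly before computing.
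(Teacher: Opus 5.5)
Your proposal is correct and follows the paper's proof essentially step by step. The involution $r\alpha^n$ gives the bijection $\llbracket a_0,\dots,a_n\rrbracket \leftrightarrow \llbracket ra_n,\dots,ra_0\rrbracket$ in a), and b) compares cylinder probabilities of the form $|\cdot|/|X|$; there you usefully make explicit the bijection $\alpha^n$ and the role of stationarity, which the paper leaves implicit. Parts c) and d) then follow directly, as in the paper.
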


\begin{proof}[\textbf{Proof}] 
a) The idempotent map $\ r\alpha^n \ $ induces bijective maps 
$$ r\alpha^n : \llbracket a_0,\dots,a_n \rrbracket \leftrightarrows 
\llbracket ra_n,\dots, ra_0 \rrbracket : r\alpha^n.$$

\noindent b) Follows from the identities
$$\mathrm{Pr}_{\small{(p,[\alpha^{-1}]_{\bullet})}}[X_{0}=a_0 , \dots ,X_{n}= a_n] 
\ = \ \frac{|\{i\in a_0 \ | \ \alpha^{-t}(i) \in a_t \ \ \ \mbox{for}
\ \ \ 0\leq t \leq n \} |}{|X|}\ = $$
$$\frac{| \llbracket a_n,\dots,a_0 \rrbracket |}{|X|}\ = \
\mathrm{Pr}[X_{0}=a_n , \dots ,X_{0}= a_0] .$$
c) $(p,[\alpha]_{\bullet}) \ $ is reversible if and only if it is the same process
as $\ (p,[\alpha^{-1}]_{\bullet}),\ $ i.e. if and only if
$ \ \mathrm{Pr}[X_{0}=a_0 , \dots ,X_{0}= a_n] \ = \ \mathrm{Pr}_{\small{(p,[\alpha^{-1}]_{\bullet})}}[X_{0}=a_0 , \dots ,X_{n}= a_n],\ $ or equivalently if and only if 
$\ \displaystyle \frac{|  \llbracket a_0,\dots,a_n  \llbracket |}{|X|}\ = \ \frac{| \llbracket a_n,\dots,a_0  \rrbracket |}{|X|}.\ $ \\

\noindent d) Follows from items a) and c).
\end{proof}

\section{Irreversibility on Average}\label{ioa}

Suppose we have a random variable $\ X \in \mathbb{R}, \ $ to study its positivity  one may look at $\ \mathrm{Pr}[X>0] \ $ or $\ E(X). \ $
Though related, they reflect distinct approaches, with no direct implication between them. If one considers  
$\ X=S(\alpha^ni)-S(\alpha^{n-1}i), \ $ the entropy difference
at times $\ n\ $ and $\ n-1, \ $ then $\ \mathrm{Pr}[X>0] \ $ 
gives the probability of strictly increasing entropy, while 
$\ E(X) \ $ gives the entropy production, both computed at time $\ n.$ \\

Consider system $\ (X,A,m, \alpha). \ $   The $\ n$-\textbf{steps entropy production rate} is the map\\
 $\ \sigma_n:X \to \mathbb{R}\ $ given by
\begin{equation}\label{+}
\sigma_n(i) \ = \ \frac{1}{n}\big(S(\alpha^ni)-S(i)\big) \ = \ 
\frac{1}{n}\mathrm{ln}\bigg(\frac{|\alpha^ni|}{|i|} \bigg). \ \ \ \ \ \mbox{Thus}
\end{equation}
\begin{equation}\label{+1}
|\alpha^ni|= e^{n\sigma_n(i)}|i|  \ \ \ \
\mbox{and} \ \ \ \ \sigma_n(i) \ = \ \frac{1}{n}\sum_{k=1}^{n}\sigma_1(\alpha^{k-1}i).
\end{equation}
The $\ q$-\textbf{mean} $n$-\textbf{steps entropy production rate}   $\ \ \sigma_n^q \in  \mathbb{R}\ \ $ is given by
\begin{equation}\label{+4}
\sigma_n^q \ = \ \langle \sigma_n\rangle_{cq} \ = \ \sum_{i\in X}(cq)_i\sigma_n(i) \ \ \ \ \mbox{for} \ \ \ \ q\in \mathrm{Prob}(A). 
\end{equation}
For $\ n\geq 1,\ $ the $\ q$-\textbf{mean} \textbf{entropy production rate at time} $\ n \ $ is given by    
\begin{equation}\label{ttt1}
\Delta_n^q \ = \ \langle \sigma_1\alpha^{n-1}\rangle_{cq} \ = \ S\big([\alpha^n]q\big)  - S\big([\alpha^{n-1}]q\big).
\end{equation}
We have that 
\begin{equation}\label{+5}
\sigma_n^q \ = \ \frac{1}{n}\big(S\big([\alpha^n]q\big)  - S(q) \big)
\ = \ \frac{1}{n}\sum_{k=1}^{n}\bigg(S\big([\alpha^k]q\big)  - S\big([\alpha^{k-1}]q\big)  \bigg)
\ = \ \frac{1}{n}\sum_{k=1}^{n}\Delta_k^q. 
\end{equation}
Indeed from (\ref{+4}) we get
\begin{equation}\label{+6}
\sigma_n^q \ = \  \frac{1}{n}\sum_{a\in A, i\in a}\frac{q_a}{|a|}\big(S(\alpha^ni)-S(a)\big) \ = \
\frac{1}{n}\sum_{a\in A}q_a
\bigg(\sum_{b\in A}\frac{|\{ i \in a \ |\ \alpha^n i \in b\}|}{|a|}S(b)-S(a)\bigg)\ =
\end{equation}
\begin{equation*}\label{+7}
\frac{1}{n}\sum_{a\in A}q_a
\bigg(\sum_{b\in A}[\alpha^n]_{ab}S(b)-S(a)\bigg)\ = \
\frac{1}{n}\bigg(S\big([\alpha^n]q\big)  - S(q) \bigg).
\end{equation*}
In particular for $\ \displaystyle p_a=\frac{|a|}{|X|}\ $ we get that 
\begin{equation}\label{+90}
\Delta_n^p = S\big([\alpha^n]p\big)  - S\big([\alpha^{n-1}]p\big)
\ = \ S(p)-S(p) \ = \ 0,
\end{equation}
\begin{equation}\label{+60}
\sigma_n^p\ = \ \displaystyle
\frac{1}{n}\big(S\big([\alpha^n]p\big)  - S(p) \big) 
\ = \ \frac{1}{n}\big(S(p)- S(p)\big)   \ = \ 0. 
\end{equation}

\

\begin{thm}
{\em  Consider system $\ (X,A,m, \alpha). \ $   \\

\noindent a) $ \sigma_n^q  \geq  0 \ $ for all $ \ q\in \mathrm{Prob}(A) \ $ if and only if $\ \alpha^n \ $ preserves entropy.\\

\noindent b) $\Delta_n^q \geq  0 \ $ for all $ \ q\in \mathrm{Prob}(A) \ $ if and only if the vectors $\ \big(\ [\alpha^n]_{ab}-[\alpha^{n-1}]_{ab}\ \big)_{b\in A}\ $ in $\ \mathbb{R}^A \ $ lie in the same half-space as the vector 
$\ (\ S(b)\ )_{b\in A}\in \mathbb{R}^A \ $ with respect to the plane orthogonal
to $\ (\ S(b)\ )_{b\in A}. $ \\

\noindent c) If $\ \alpha\ $ has return time $\ d,\ $ then $\ \sigma_d=0\ $ and thus $\ \sigma_d^q  =  0 \ $ for all $ \ q\in \mathrm{Prob}(A). \ $
Either $\ \Delta_k^q =  0 \ $ for $ \ 1\leq k \leq d,\ $ or
there exists a $ \ 1\leq k \leq d\ $ such that  $\ \Delta_k^q <  0. \ $

}
\end{thm}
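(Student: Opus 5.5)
Throughout I read ``$\alpha^n$ preserves entropy'' as $S(\alpha^n i)=S(i)$ for all $i\in X$. The common starting point is linearity in $q$. By (\ref{+6}), $\sigma_n^q=\sum_{a}q_a\,\sigma_n^{\delta_a}$, where
$$\sigma_n^{\delta_a} \ = \ \frac{1}{n}\Big(\sum_b[\alpha^n]_{ab}S(b)-S(a)\Big) \ = \ \frac{1}{n|a|}\sum_{i\in a}\big(S(\alpha^ni)-S(i)\big).$$
Similarly, (\ref{ttt1}) gives $\Delta_n^q=\sum_a q_a\sum_b\big([\alpha^n]_{ab}-[\alpha^{n-1}]_{ab}\big)S(b)$. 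Since $\mathrm{Prob}(A)$ is the convex hull of the $\delta_a$, nonnegativity for all $q$ is equivalent to nonnegativity at each vertex $\delta_a$.

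\textbf{Part b).} This is immediate from the vertex reduction. $\Delta_n^{\delta_a}\geq 0$ says exactly that the inner product of $\big([\alpha^n]_{ab}-[\alpha^{n-1}]_{ab}\big)_{b\in A}$ with $(S(b))_{b\in A}$ is nonnegative. That is, the vector lies in the closed half-space, bounded by the plane orthogonal to $(S(b))_b$, that contains $(S(b))_b$.

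\textbf{Part a).} If $\alpha^n$ preserves entropy, then $\sigma_n=0$ pointwise, so $\sigma_n^q=0$. For the converse, assume $\sigma_n^{\delta_a}\geq 0$ for every $a$. Then for each macrostate $a$ the average of $S(\alpha^ni)$ over $i\in a$ is at least $S(a)$. This alone does not force pointwise equality, so I will run a downward induction on entropy levels.

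Let $s_1>s_2>\cdots$ be the distinct values of $S$ on $A$, and let $L_k\subseteq X$ be the union of macrostates with entropy $s_k$.
\begin{itemize}
\item For $a\subseteq L_1$, every value $S(\alpha^n i)$ is at most $s_1=S(a)$ while the average is at least $S(a)$. Hence $\alpha^n L_1\subseteq L_1$, and since $\alpha^n$ is a bijection of a finite set, $\alpha^n L_1=L_1$.
\item Suppose $\alpha^n L_j=L_j$ for $j<k$. Then points of $L_k$ cannot map into $L_1\cup\dots\cup L_{k-1}$, because that set is already the full image of itself. So for $a\subseteq L_k$ we have $S(\alpha^n i)\leq s_k$ on $a$, and the same averaging argument gives $\alpha^n L_k=L_k$.
\end{itemize}
This yields $S(\alpha^n i)=S(i)$ for all $i$. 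I expect this inductive step to be the only real obstacle: the naive argument (use $\sigma_n^p=0$ from (\ref{+60}) to get zero mean on each block) only gives blockwise averages, not pointwise equality.

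\textbf{Part c).} Return time $d$ means $\alpha^d=1$, so $\sigma_d(i)=\frac1d\ln(|i|/|i|)=0$ for every $i$, and hence $\sigma_d^q=0$ by (\ref{+4}). By (\ref{+5}), $0=\sigma_d^q=\frac1d\sum_{k=1}^d\Delta_k^q$. So either all $\Delta_k^q$ vanish for $1\leq k\leq d$, or, since they sum to zero and not all are zero, some $\Delta_k^q<0$.
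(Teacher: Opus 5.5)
Your proposal is correct and follows essentially the paper's proof: reduce to the vertices $\delta_a$ of $\mathrm{Prob}(A)$; for (a), run a top-down induction on entropy levels starting from a maximum-entropy macrostate; read (b) as a sign condition on an inner product; and get (c) from $\sigma_d=0$ together with identity (\ref{+5}). Your explicit inductive step, that $\alpha^n$-invariance of the higher levels stops points of $L_k$ from moving up, spells out what the paper compresses into ``by a similar argument \dots and so on.'' It also correctly yields $\alpha^n$-invariance of each level, where the paper says more loosely ``union of $\alpha$-cycles.''
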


\begin{proof}[\textbf{Proof}]
a) If $\ \sigma_n^q  \geq  0 \ $ for all $\ q, \ $ then 
\begin{equation}\label{-1}
\sum_{a\in A}q_a
\bigg(\sum_{b\in A}[\alpha^n]_{ab}S(b)-S(a)\bigg) \ \geq \ 0 
\end{equation}
for all  $\ q\in \mathrm{Prob}(A),\ $ and therefore
\begin{equation}\label{+9}
S(a)\ \leq \ \sum_{b\in A}[\alpha^n]_{ab}S(b) \ \ \ \mbox{for \ all} \ \ a \in A.
\end{equation}
Let $\ a \ $ be a macrostate of maximum entropy. From (\ref{+9}) we conclude that 
$ \ [\alpha^n]_{ab} = 0 \ $ if $\ S(b)<S(a), \ $ i.e. $\ \alpha^n \ $ sends microstates in $\ a \ $ into macrostates with the same entropy as $\ a. \ $
Thus the zone of highest entropy is a union of $\ \alpha$-cycles. By a similar argument we see that the zone of second highest entropy is also a union of 
$\ \alpha$-cycles, an so on. Conversely, if $\ \alpha^n\ $ preserves entropy, then $\ \ \displaystyle \sigma_n(i) =   \frac{1}{n}\big(S(\alpha^ni)-S(i)\big) = 0. \ \ $ \\

\noindent b) $\ \Delta_n^q \geq  0 \ $ for all $ \ q\in \mathrm{Prob}(A) \ $
if and only if
\begin{equation}\label{+20}
\sum_{a\in A}q_a\bigg(\sum_{b\in A}\big( [\alpha^n]_{ab}-[\alpha^{n-1}]_{ab}\big)S(b)\bigg) \ \geq \ 0 \ \ \ \ \mbox{for \ all} \ \  \ q\in \mathrm{Prob}(A). 
\end{equation}
Equivalently, for each $\ a\in A\ $ we have that
\begin{equation}\label{+12}
\sum_{b\in A}\big( [\alpha^n]_{ab}-[\alpha^{n-1}]_{ab}\big)S(b) \ \geq \ 0.
\end{equation}
So the inner product of 
$\ \big(\ [\alpha^n]_{ab}-[\alpha^{n-1}]_{ab}\ \big)_{b\in A} \in \mathbb{R}^A \ $ and $\ (\ S(b) \ )_{b\in A}\in \mathbb{R}^A \ $ is non-negative;
geometrically this means that  $\ \big(\ [\alpha^n]_{ab}-[\alpha^{n-1}]_{ab}\ \big)_{b\in A} \in \mathbb{R}^A \ $ lies in the same half-space as  $\ (\ S(b) \ )_{b\in A}\in \mathbb{R}^A \ $ 
with respect to the plane orthogonal
to $\ (\ S(b)\ )_{b\in A}. \ $ \\

\noindent c) The first statement is clear $\ \ \displaystyle \sigma_d(i) \ = \   \frac{1}{n}\big(S(\alpha^di)-S(i)\big) \ = \ \frac{1}{n}\big(S(i)-S(i)\big) \ = \ 0. \ \ $ The second statement follows from the identity
$\ \displaystyle \sigma_d^q  \ = \ \frac{1}{n}\sum_{k=1}^{d}\Delta_k^q.$

\end{proof}

For $\ q\in \mathrm{Prob}(A) \ $ we let  $\ \mathrm{w}_n^q :\mathbb{R}\to [0,1]\ $ be the density distribution of the random variable $\ \sigma_n: X \to \mathbb{R} \ $ with respect to the coarse grained probability measure $\ cq \in \mathrm{Prob}(X). \ $  Results such as Theorem \ref{cr1}-b below are known as fluctuation theorems, see  Evans and Searle \cite{esl}, 
Deward and Maritan \cite{de2}.\\

\begin{thm}\label{cr1}
{\em  Consider system $\ (X,A,m, \alpha,r) \ $  such that  $\ r \ $ preserves entropy.  Let $\ q\in \mathrm{Prob}(A) \ $ be constant on the communications classes of $\ [r\alpha^n]. \ $ \\

\noindent a) In $\ n$-steps, entropy production at $\ r\alpha^ni \ $ is the opposite of entropy production at $ \ i, \ $ that is $\ \ \sigma_n(r\alpha^ni) \ = \  - \sigma_n(i)\ \ $ and thus $\ \ |r\alpha^ni|= e^{-n\sigma_n(r\alpha^n i)}|i|.$ \\  

\noindent b) $ \ \mathrm{w}_n^q(x)\ = \ e^{nx}\mathrm{w}_n^q(-x) \ \ $
for $\ \ x\in \mathbb{R}.$\\

\noindent c)  $\ \displaystyle
 \sigma_n^q \ = \sum_{x\in \mathbb{R}_{> 0}}x\big( 1 - e^{-nx} \big)\mathrm{w}_n^q(x) \ \geq \ 0. \ $ 

\noindent d) $\ \sigma_n^q  =  0 \ $ if and only if 
$\ \alpha^n \ $ preserves entropy on 
$\ \displaystyle  \coprod_{a\in A,\ q_a\neq 0}a.$ 

\noindent e) For the uniform probability $\ \displaystyle u_a=\frac{1}{|A|}\ $ on $\ A, \ $
we have that $\ \sigma_n^u  >  0 \ $ if and only if there is a microstate such that
$\ S(\alpha^n i ) \neq S(i). \ $
}
\end{thm}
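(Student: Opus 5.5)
The plan is to build everything on the involution $\varphi = r\alpha^n$ of $X$. Here I read ``$r$ preserves entropy'' as $S(ri)=S(i)$ for $i\in X$; this is the only reading under which the statement can hold. From $\alpha^{-1}=r\alpha r$ and $r^2=1$ we get $\alpha^{-n}=r\alpha^n r$, hence $\alpha^n r = r\alpha^{-n}$ and $\varphi^2 = r\alpha^n r\alpha^n = r r\alpha^{-n}\alpha^n = 1$.

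For a), compute $\sigma_n(\varphi i)=\frac{1}{n}\big(S(\alpha^n r\alpha^n i)-S(r\alpha^n i)\big)$. Using $\alpha^n r\alpha^n = r$, the first term is $S(ri)=S(i)$, and the second is $S(\alpha^n i)$. This gives $\sigma_n(\varphi i)=-\sigma_n(i)$. The formula for $|r\alpha^n i|$ then follows from $|r\alpha^n i|=|\alpha^n i|=e^{n\sigma_n(i)}|i|$, which is (\ref{+1}) together with entropy preservation by $r$.

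For b), write $\mathrm{w}_n^q(x)=\sum_{\sigma_n(i)=x}(cq)_i$. By a), $\varphi$ restricts to a bijection from $\{\sigma_n=x\}$ onto $\{\sigma_n=-x\}$. It therefore suffices to prove the pointwise identity $(cq)_i=e^{nx}(cq)_{\varphi i}$ whenever $\sigma_n(i)=x$. Since $(cq)_i=q_{mi}/|i|$ and $|\varphi i|=e^{nx}|i|$, this reduces to $q_{mi}=q_{m\varphi i}$. This is the one step where the hypothesis on $q$ enters, and I expect it to be the main point to get right. The microstate $i\in mi$ is sent by $r\alpha^n$ into $m\varphi i$, so $[r\alpha^n]_{mi,\,m\varphi i}>0$. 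Because $\varphi$ is an involution, the reverse transition $[r\alpha^n]_{m\varphi i,\,mi}>0$ also holds. So $mi$ and $m\varphi i$ lie in the same $[r\alpha^n]$-communication class (consistent with Theorem \ref{mct}-b applied to $r\alpha^n$), and $q$ is constant there. Summing the pointwise identity over $\{\sigma_n=x\}$ and reindexing by $\varphi$ gives $\mathrm{w}_n^q(x)=e^{nx}\mathrm{w}_n^q(-x)$.

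For c), the distribution is finitely supported. Write $\sigma_n^q=\sum_x x\,\mathrm{w}_n^q(x)$, pair $x$ with $-x$ (the $x=0$ term contributes nothing), and use b) to substitute $\mathrm{w}_n^q(-x)=e^{-nx}\mathrm{w}_n^q(x)$. This yields $\sum_{x>0}x(1-e^{-nx})\mathrm{w}_n^q(x)$, in which every term is nonnegative.

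For d), equality in c) forces $\mathrm{w}_n^q(x)=0$ for all $x>0$, and by b) also for all $x<0$. Hence $\sigma_n=0$ on the support of $cq$, which is exactly $\coprod_{q_a\neq0}a$. The converse is immediate.

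For e), the uniform $u$ is constant on all of $A$, so it satisfies the hypothesis, and its support is all of $X$. Applying c) and d) gives $\sigma_n^u\geq 0$, with equality precisely when $S(\alpha^n i)=S(i)$ for every $i$.
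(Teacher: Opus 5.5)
Your proposal is correct and follows essentially the same route as the paper. The involution $r\alpha^n$ gives a), and the reindexing $i\mapsto r\alpha^n i$ together with $q_{m(i)}=q_{m(r\alpha^n i)}$ gives the fluctuation identity b). Parts c)--e) then follow by pairing $x$ with $-x$.

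Several of your steps usefully fill in what the paper only asserts:
\begin{itemize}
\item your explicit check that $(r\alpha^n)^2=1$;
\item your argument that $m(i)$ and $m(r\alpha^n i)$ communicate under $[r\alpha^n]$, where the paper just says ``by definition'';
\item your use of b) in d) to kill the terms with $x<0$;
\item your reading of entropy preservation as $S(ri)=S(i)$, which is what the paper's proof actually uses, since its stated definition contains a typo.
\end{itemize}
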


\begin{proof}[\textbf{Proof}]
a) Follows from the identities
\begin{equation}\label{+3}
\sigma_n(r\alpha^ni)\ = \ \frac{S(\alpha^n r\alpha^ni)-S(r\alpha^ni)}{n} \ = \
\frac{S(ri)-S(r\alpha^ni)}{n}  \ = \ - \sigma_n(i) .
\end{equation}
Therefore using  (\ref{+1}) we get
\begin{equation}\label{+21}
|r\alpha^ni| \ = \ |\alpha^n i|\ = \  e^{n\sigma_n(i)}|i| \ = \
 e^{-n\sigma_n(r\alpha^n i)}|i|.
\end{equation}

\noindent b) Below we use to change of variables $\ i \to r\alpha^n i\ $
and item a)\ :
\begin{equation}\label{+20}
\mathrm{w}_n^q(x)\ = \ \sum_{\sigma_n(i)=x}(cq)_i \  = \
\sum_{\sigma_n(i)=x}\frac{q_{mi}}{|i|} \ = \ 
\sum_{\sigma_n(r\alpha^ni)=x}\frac{q_{m( r\alpha^n i)}}{| r\alpha^n i|}\ = 
\end{equation}
\begin{equation*}
\sum_{\sigma_n(r\alpha^ni)=x}\frac{q_{m( r\alpha^n i)} e^{n\sigma_n(r\alpha^n i)}}{| i|}\ = \ 
e^{nx}\sum_{\sigma_n(r\alpha^ni)=x}\frac{q_{m( r\alpha^n i)} }{| i|} \ = \ 
e^{nx}\sum_{\sigma_n(i)=-x}\frac{q_{mi}}{|i|} \ =\ e^{nx}\mathrm{w}_n^q(-x),  
\end{equation*}
where $\ q_{m( r\alpha^n i)} = q_{m(i)}\ $ since, by definition, 
macrostates $\ m(i)\ $ and $\ m( r\alpha^n i)\ $ belong to same communication
class of $\ [r\alpha^n]. \ $\\

\noindent c) We have finite sums
\begin{equation*}
\sigma_n^q  \ = \ \langle \sigma_n \rangle_{cq} \ = \ 
\sum_{x\in \mathbb{R}}x\mathrm{w}_n^q(x) \ = \ 
\sum_{x\in \mathbb{R}_{>0}}
x\big( \mathrm{w}_n^q(x) - \mathrm{w}_n^q(-x) \big) \ = \
\end{equation*}
\begin{equation*}
\sum_{x\in \mathbb{R}_{> 0}}x\big( \mathrm{w}_n^q(x) - e^{-nx}\mathrm{w}_n^q(x) \big) \ = \ 
\sum_{x\in \mathbb{R}_{> 0}}x\big( 1 - e^{-nx} \big)\mathrm{w}_n^q(x).
\end{equation*}

\noindent d) From c) we have that $ \ \sigma_n^q  = 0 \ $ if and only if 
 $\ \mathrm{w}_n^q(0)=1\ $ and 
$$ 0 \ = \  \mathrm{w}_n^q(x) \ = \ \sum_{\sigma_n(i)=x}\frac{q_{mi}}{|i|}
\ \ \ \ \mbox{for} \ \ \ \ x\neq 0. $$
Thus $\ q_{a}=0\ $ if there is microstate $\ i\in a\ $ such that 
$\ S(\alpha^n i)\neq S(i).\ $ \\
 
\noindent e) Uniform probability $\ \displaystyle u_a= \frac{1}{|A|}\ $ satisfies the conditions of the theorem, thus the previous items apply. 
\end{proof}

The coarse-grained distribution $\ cu\ $ of uniform distribution $\ u \ $  from Theorem \ref{cr1}-e, is given by
$\ \ \displaystyle (cu)_i=\frac{1}{|A||a|} \ \ $ for $\ i\in a. \ $ Thus the probability of a microstate is inversely proportional to the size of  its macrostate.  In the presence of large differences among the sizes of macrostates, small entropy microstates are assigned much larger probabilities that large entropy microstates.  \\

Given system $ \ (X,A,m,\alpha) \ \ $  set
$\ \ X^{\leq |c|}\ = \ \{i\in X \ | \ |i|\leq |c|\}. \ $ \\

\begin{thm}\label{ie}
{\em Let $ \ (X,A,f, \alpha,r) \ $ be a system 
such that $\ r \ $ preserves entropy, $\ c\in A^{\mathrm{neq}}\ $ be a non-equilibrium macrostate, and $\ q \in \mathrm{Prob}(A)\ $ be  given by 
$$ \displaystyle q_a= \frac{|a|}{|X^{\leq |c|}|} \ \ \ \mbox{if} \ \ \ |a|\leq |c|, \ 
\ \ \ \ \mbox{and} \ \ \ \ \ q_a=0 \ \ \ \mbox{if} \ \ \ |a|>|c|.
\ \ \ \ \ \mbox{We have that: } $$
\noindent a)  $ \displaystyle \mathrm{w}_{n}^ {q}(x) \geq 
  \mathrm{w}_{n}^{q}(-x) \ \ $ for $\ \ x\geq 0. $ \\

\noindent b) $ \displaystyle \sigma_{n}^q  \geq  0, \ $ and 
$\ \sigma_{n}^q=0  \ $ if and only if $\  X^{\leq |c|} \ $ is an $\ \alpha^n$-invariant set. \\
}
\end{thm}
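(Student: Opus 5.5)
The plan is to observe first that the chosen $q$ makes the coarse-grained measure $cq$ uniform on the set $U=X^{\leq |c|}$. Indeed, by (\ref{cgrain}), for $i\in a$ with $|a|\leq|c|$ we get $(cq)_i=\frac{q_a}{|a|}=\frac{1}{|U|}$, and $(cq)_i=0$ for $i\notin U$. Consequently
$$\mathrm{w}_n^q(x)\ = \ \frac{|\{i\in U \ | \ \sigma_n(i)=x\}|}{|U|}.$$
Both items then reduce to counting microstates of $U$.

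For a), the case $x=0$ is trivial, so fix $x>0$. I will build an injection from $\{i\in U \ | \ \sigma_n(i)=-x\}$ into $\{i\in U \ | \ \sigma_n(i)=x\}$ via $i\mapsto r\alpha^n i$.
\begin{itemize}
\item The map $r\alpha^n$ is an involution: $(r\alpha^n)^2=r\alpha^n r\alpha^n=\alpha^{-n}\alpha^n=1$. Hence it is injective.
\item By Theorem \ref{cr1}-a, which uses that $r$ preserves entropy (understood as $|ri|=|i|$), we have $\sigma_n(r\alpha^n i)=-\sigma_n(i)=x$.
\item It remains to check that $r\alpha^n i\in U$. Since $\sigma_n(i)=-x<0$, we get $|r\alpha^n i|=|\alpha^n i|<|i|\leq|c|$.
\end{itemize}
This gives $\mathrm{w}_n^q(-x)\leq \mathrm{w}_n^q(x)$. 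The one point needing care is exactly this membership check: the injection must land back inside $U$, and that is where the cutoff at $|c|$ is used.

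For b), nonnegativity follows from a) exactly as in Theorem \ref{cr1}-c:
$$\sigma_n^q=\sum_{x>0}x\big(\mathrm{w}_n^q(x)-\mathrm{w}_n^q(-x)\big)\geq 0.$$
For the equality case I will argue directly rather than through the densities. By (\ref{+4}) and the uniformity of $cq$ on $U$,
$$n|U|\,\sigma_n^q=\sum_{i\in U}S(\alpha^n i)-\sum_{i\in U}S(i)=\sum_{j\in \alpha^nU\setminus U}S(j)-\sum_{j\in U\setminus\alpha^nU}S(j).$$
Because $\alpha^n$ is a bijection, $|\alpha^nU|=|U|$, so the two difference sets have the same cardinality.
\begin{itemize}
\item Every $j\notin U$ has $|j|>|c|$, so $S(j)>\mathrm{ln}|c|$.
\item Every $j\in U$ has $S(j)\leq \mathrm{ln}|c|$.
\end{itemize}
Hence the difference is zero if the sets are empty, i.e. if $\alpha^nU=U$, and strictly positive otherwise. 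This reproves $\sigma_n^q\geq 0$ and gives $\sigma_n^q=0$ if and only if $U=X^{\leq|c|}$ is $\alpha^n$-invariant.

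I expect no serious obstacle; the step needing the most attention is the membership check in a) noted above.
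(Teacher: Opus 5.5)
Your proof is correct. For a) you are doing what the paper does. The paper splits $\{i\in X^{\leq|c|}\mid \sigma_n(i)=x\}$ according to whether $r\alpha^n i$ stays in $X^{\leq|c|}$. It matches the first piece with $\{i\in X^{\leq|c|}\mid \sigma_n(i)=-x\}$ via the involution $r\alpha^n$, and keeps the rest, $\{i\in X^{\leq|c|}\mid \sigma_n(i)=x,\ \alpha^n i\in X^{>|c|}\}$, as an explicit excess. Your injection is the same matching with that excess discarded.

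The real difference is in b). The paper substitutes the excess into $\sigma_n^q=\sum_{x>0}x\big(\mathrm{w}_n^q(x)-\mathrm{w}_n^q(-x)\big)$, obtaining
$$\sigma_n^q\ =\ \frac{1}{|X^{\leq|c|}|}\sum_{x>0}x\,\big|\{i\in X^{\leq|c|}\mid \sigma_n(i)=x,\ \alpha^n i\in X^{>|c|}\}\big|,$$
and reads off both the sign and the equality case. That route goes through a), so it needs the entropy-preserving reversion $r$. Your exchange argument, comparing $\alpha^nU\setminus U$ with $U\setminus\alpha^nU$, uses only two facts: $\alpha^n$ is a bijection, and $U=X^{\leq|c|}$ is a sublevel set of $S$. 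So it proves b) for an arbitrary system $(X,A,m,\alpha)$ with no reversion map at all, and it makes your preliminary appeal to a) for nonnegativity redundant.

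The paper's route buys a termwise-positive formula. Each microstate leaving $X^{\leq|c|}$ contributes its own entropy production, while moves inside $X^{\leq|c|}$ cancel in pairs under $r\alpha^n$; that cancellation genuinely requires reversibility. Your route buys generality: it shows b) is really a statement about bijections and entropy sublevel sets, not about reversibility.
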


\begin{proof}[\textbf{Proof}]

\noindent a)  Case $\ x=0\ $ is trivial. For $ \ x>0 \ $ 
using Theorem \ref{cr1}-a we get 

$$ \mathrm{w}_{n}^ {q}(x) \ \ = \ \ \frac{|\{i \in X^{\leq |c|}\ | \ \sigma_n(i)=x \}|}{|X^{\mathrm{\leq |c|}}|}\ \ = $$
$$ \frac{|\{i \in X^{\leq |c|} \ | \ \sigma_n(i)=x, \  r\alpha^ni \in X^{\leq |c|} \}|}{|X^{\leq |c|}|}
\ \ + \ \ \frac{|\{i \in X^{\leq |c|} \ | \ \sigma_n(i)=x, \  r\alpha^ni \in
 X^{> |c|}\}|}{|X^{\leq |c|}|}\ \ =  $$
$$ \frac{|\{i \in X^{\leq |c|} \ | \ \sigma_n(i)=-x \}|}{|X^{\leq |c|}|}
\ \ + \ \ \frac{|\{i \in X^{\leq |c|} \ | \ \sigma_n(i)=x, \  \alpha^ni \in
 X^{> |c|}\}|}{|X^{\leq |c|}|}\ \ =  $$
$$  \mathrm{w}_{n}^ {q}(-x)\ \ + \ \ \frac{|\{i \in X^{\leq |c|} \ | \ \sigma_n(i)=x, \  \alpha^ni \in
 X^{> |c|}\}|}{|X^{\leq |c|}|}\ \ \geq \ \
 \mathrm{w}_{n}^ {q}(-x).  $$
\noindent b) From item a) we get that
$$\sigma_{n}^q  \ = \ \sum_{x\in \mathbb{R}}x\mathrm{w}_{n}^ {q}(x) \ = \
\sum_{x\in \mathbb{R}_{> 0}}
x\big(\mathrm{w}_{n}^{q}(x)-\mathrm{w}_{n}^{q}(-x)\big) $$
$$\frac{1}{|X^{\leq |c|}|}\sum_{x\in \mathbb{R}_{> 0}}
x\ |\{i \in X^{\leq |c|} \ | \ \sigma_n(i)=x, \  \alpha^ni \in
 X^{> |c|}\}| \ \geq \ 0.$$ Thus $\ \sigma_{n}^q = 0 \ $ if and only if
$\  \{i \in X^{\leq |c|} \ | \ \sigma_n(i)=x, \ \alpha^ni \in
 X^{> |c|}\} = \emptyset \ \ $  for $\ x> 0, \ $ i.e. if and only if 
 $\ X^{\leq |c|} \ $ is an $\ \alpha^n$-invariant set. 
\end{proof}

The coarse-grained distribution $\ cq \ $ of the probability distribution  $\ q \ $ from
Theorem \ref{ie} is concentrated on microstates of entropy lesser or equal to
$\ \mathrm{ln}|c|, \ $ and it is uniform among such microstates.  \\

\begin{rem}
{\em Let $ \ (X,A,m,\alpha) \ $ have return time $\ d. \ $ From the Gibbs viewpoint \cite{g, xing} and in light of Theorem \ref{esl} it is natural to consider how
$$\ H(c[\alpha^n]q) \ = \ H([\alpha^n]q) + S([\alpha^n]q)\ \ \  \mbox{changes with} \ \  \ n, \ \ \ \ \mbox{for}\ \ \ q\in \mathrm{Prob}(A).$$
We have already shown that 
$\ \ H([\alpha^n]q)\ + \ S([\alpha^n]q)\ \geq \  H(q) + S(q) \ \ $ for all $\ \ n.\ \ $ 
Since $\ \alpha^d=1\ $ we have $\ \ H([\alpha^d]q)
 + S([\alpha^d]q) \ = \  H(q) + S(q). \ \ $ Therefore 
$$\mbox{either} \ \ \ \ H([\alpha^n]q) + S([\alpha^n]q) 
\ = \  H(q) + S(q) \ \ \ \  \mbox{for \ all} \ \ \ n, \ \  \ \ \mbox{or} $$
there is a smallest $\ 2 \leq k\leq d \ $ such that
$$\ \ H([\alpha^l]q) + S([\alpha^l]q)\ \geq \  
H([\alpha^{l-1}]q) + S([\alpha^{l-1}]q) \ \ \ \  \mbox{for} \ \ \  
\ 1 \leq l < k, $$ 
$$\mbox{and} \ \ \ \ H([\alpha^k]q) + S([\alpha^k]q) \ < \  
H([\alpha^{k-1}]q) + S([\alpha^{k-1}]q). $$

Here we limit ourselves to construct an example for which $\ k \ $  attains its minimal value $\ k=2. \ $ Consider a system with one block of size $\ n, \ $  one block of size $\ n^2, \ $ and dynamics $\ \alpha\ $ given diagrammatically, we display the case $\ n=3, \ $ where each row 
represents a block, and the arrow $\ \searrow\ $ means moving to the bottom of the next column: \\

$\ytableausetup{centertableaux, boxsize=0.7cm}
\begin{ytableau}
\searrow & \searrow & \rightarrow  & \rightarrow &  \rightarrow & \rightarrow & \rightarrow &  \rightarrow & \searrow\\
\uparrow & \uparrow & \uparrow 
 \end{ytableau}$ 

\

\

\noindent Let $\ q \ $ be  the probability distribution on blocks that assigns probability $\ 1 \ $ to the block with $\ n \ $ elements. We have $\ \ H([\alpha]q)= 0, \ $ 
$ \ S([\alpha]q)= 2\mathrm{ln}(n), \ $ and
$$\ \  H([\alpha^2]q)\ = \  -\frac{n-1}{n}\mathrm{ln}\big(\frac{n-1}{n}\big)
\ - \ \frac{1}{n}\mathrm{ln}\big(\frac{1}{n}\big), \ \ \ \ \ \
S([\alpha^2]q)\ = \ 
 \frac{n-1}{n}\mathrm{ln}(n)\ + \ \frac{2}{n}\mathrm{ln}(n). $$
For $\ n \geq 3, \ $ we have built a cyclic system with $\ n^2+n \ $ microstates, with a $\ \ \displaystyle \frac{1}{n+1}\ $ dominant equilibrium, and such that
$$H([\alpha]q)+ S([\alpha]q) - 
\big(H([\alpha^2]q)+S([\alpha^2]q)\big)\ = \ 
\frac{1}{n}\mathrm{ln}\bigg[\frac{(n-1)^{n-1}}{n}\bigg]
  \ > \ 0.$$
 }
\end{rem}

\section{Irreversibility from Attraction}\label{ifa}

In the previous section we made use of entropy preserving reversion maps, in this section we take a different approach where this condition no longer holds.
We begin showing that a reversible system, with  entropy preserving reversion, has equal decreasing and increasing entropy rates, thus 
$\ \varepsilon$-bound decreasing entropy rate is equivalent to entropy being mostly constant. If $\ |D|< |I|\ $
in a reversible system,  then the reversion map 
$\ r \ $ does not preserve entropy. \\

\begin{prop}
{\em Consider system $\ (X,A,m, \alpha,r)\ $ with $\ r \ $ entropy preserving.\\

\noindent a)  $\ |D| =  |I|. \ \ \ $ \ \ 
b) $\ \displaystyle \frac{|D|}{|X|} \leq  \varepsilon \ \ $ if and only if
$\ \ \displaystyle \frac{|C|}{|X|} \geq 1- 2\varepsilon .\ \ $ \ \ 
c) For $\ n\geq 1 \ $ set 
$$D_n=\{i\in X  \ | \ S(i) > S(\alpha i) > \cdots > S(\alpha^{n-1} i) > S(\alpha^n i) \},$$
$$I_n=\{i\in X  \ | \ S(i) < S(\alpha i) < \cdots < S(\alpha^{n-1} i) < S(\alpha^n i) \}.$$
We have that  $\ \ |D_n|=|I_n| .$

 }
\end{prop}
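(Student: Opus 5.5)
The idea is to build an explicit bijection between the decreasing and increasing sets using the involution $r\alpha^n$, and then read off the counts. I read the hypothesis "$r$ entropy preserving" as $S(ri)=S(i)$ for all $i\in X$, i.e. $|ri|=|i|$. This is clearly the intended meaning; with the literal wording $S(\alpha i)=S(i)$ one would have $D=I=\emptyset$ and the statement would be trivial.

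\emph{Step 1: the involutions.} From $r^2=1$ and $\alpha^{-1}=r\alpha r$ we get $r\alpha^k r=\alpha^{-k}$ for every $k\geq 0$. Hence
$$(r\alpha^n)^2 \ = \ (r\alpha^n r)\,\alpha^n \ = \ \alpha^{-n}\alpha^n \ = \ 1,$$
so each $r\alpha^n$ is a bijection of $X$, equal to its own inverse. Moreover, for $j=r\alpha^n i$ and $0\leq k\leq n$,
$$\alpha^k j \ = \ \alpha^k r \alpha^{n} i \ = \ r\,(r\alpha^k r)\,\alpha^n i \ = \ r\alpha^{n-k}i .$$
Using entropy preservation of $r$, this gives $S(\alpha^k j)=S(\alpha^{n-k}i)$.

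\emph{Step 2: part c), with a) as the case $n=1$.} Let $i\in D_n$ and $j=r\alpha^n i$. By Step 1 the entropy sequence $S(j),S(\alpha j),\dots,S(\alpha^n j)$ is exactly $S(\alpha^n i),S(\alpha^{n-1}i),\dots,S(i)$. This is the entropy sequence of $i$ read backwards, so it is strictly increasing, and $j\in I_n$. By the same argument $r\alpha^n$ maps $I_n$ into $D_n$. Since $r\alpha^n$ is an involution, it restricts to mutually inverse bijections $D_n\leftrightarrows I_n$, hence $|D_n|=|I_n|$. Taking $n=1$ gives $D_1=D$ and $I_1=I$, so $|D|=|I|$, which is a); the bijection there is $r\alpha$.

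\emph{Step 3: part b).} Combining a) with identity (\ref{dic=1}) gives
$$\frac{|C|}{|X|} \ = \ 1-2\frac{|D|}{|X|}.$$
Therefore $\frac{|D|}{|X|}\leq\varepsilon$ holds if and only if $\frac{|C|}{|X|}\geq 1-2\varepsilon$.

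The only point requiring care is the identity $\alpha^k r\alpha^n=r\alpha^{n-k}$ in Step 1, i.e. propagating $r\alpha r=\alpha^{-1}$ to powers of $\alpha$. Once that identity is in place, the rest is bookkeeping.
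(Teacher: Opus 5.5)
Your proposal is correct and follows essentially the same route as the paper. The paper also uses the involution $r\alpha^n$: it sends $D_n$ into $I_n$ and back by reversing the $\alpha$-path $r\alpha^n i\to r\alpha^{n-1}i\to\cdots\to ri$. Part a) is then the case $n=1$, and part b) follows from a) together with (\ref{dic=1}). Your explicit checks that $(r\alpha^n)^2=1$ and $\alpha^k r\alpha^n=r\alpha^{n-k}$ fill in details the paper leaves implicit. Reading entropy preservation as $S(ri)=S(i)$ is also right: the paper's definition contains a typo, and its proof uses exactly this condition.
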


\begin{proof}[\textbf{Proof}]
Item b) follows from item a) and (\ref{dic=1}). \
Item a) is case $\ n=1\ $ of item c). 
We show that the map  $\ r\alpha^n\ $ restricts to a bijection
$$\ r\alpha^n: D_n \rightleftarrows I_n : r\alpha^n. \ $$ 

\noindent  Let $\ \ i \in D_n, \ \ $ since $\ r\ $ preserves entropy we have that $$ \ S(ri) \ > \ S(r\alpha i)\ > \ \cdots \ > \ S(r\alpha^{n-1} i)\ > \ S(r\alpha^n i).  $$

\noindent Therefore  $\ r\alpha^n i \in I_n\ $ since we have  
$\ \alpha$-path
$$r\alpha^n i \ \ \to \ \ r\alpha^{n-1} i \ \ \to \ \ \cdots \ \ \to
\ \ r\alpha i \ \ \to \ \ ri.$$

\noindent A similar argument shows that $\ \ r\alpha^n i \in D_n\ \ $
if $\ \ i\in I_n. \ $
\end{proof}

\

Consider system $\ (X,\alpha)\ $ and $\ E\subseteq X.\ $ We say that the system is  $\ E$-\textbf{bound} if each $\ \alpha$-cycle intersects $\ E \subseteq X. \ $ For such a system, the $\ E$-\textbf{reaching time map} $\ e: X \to \mathbb{N} \ $ is given by
\begin{equation}\label{eq1}
e(i) \ = \  \mbox{smallest} \ \ k\geq 0 \  \ \mbox{such\ that \ } \alpha^k(i) \in E. 
\end{equation}
We let $ \ L = \underset{i\in X}{\mathrm{max}}\ e(i)\ $ be the \textbf{largest} $\ E$-\textbf{reaching time}. \\

Given system $\ (X,\alpha)\ $ we say that $\ E \subseteq X \ $ is  $\ S$-\textbf{stable} if for any microstate $\ i\in E\cap\alpha(X\setminus E)\ $
we have that $\ \alpha^{k}i \in E \ $ for $\ 0 \leq k \leq S. \ $
So  $\ (X,A,m, \alpha)\ $ has a $\ S$-\textbf{stable equilibrium} 
if whenever a microstate arrives to the equilibrium it remains in the equilibrium for at least $\ S \ $ units of time.  \\ 

Let  $\ A: X \to  \mathbb{N} \ $ be given by demanding that
entropy defines an \textbf{arrow of time} of length $\ A(i) \ $ at microstate $\ i, \ $ in the sense that $\ A(i)\ $  is the largest integer such that
$\ \ S(i) \ < \ S(\alpha i) \ < \cdots \ < \ S(\alpha^A i)  , \ \ $
i.e. beginning at $\ i \ $ entropy is strictly increasing for exactly $\ A(i) \ $ units of time. Thus, if the microstates along the $\ \alpha$-path of length $\ A(i) \ $ starting at $\ i \ $ are given in an arbitrary unordered fashion, then the $\ \alpha$-path ordering can be recovered computing $\ S \ $
instead of $\ \alpha. \ $\\

\begin{thm}\label{g0}
{\em Let $\ (X,\alpha)\ $ be $\ E$-bound.
The system $ \ (X,[0,L],e,\alpha) \ $ satisfies:\\

\noindent a) $E\ $ is an equilibrium macrostate. Entropy always increases  on $\ X^{\mathrm{neq}}. \ $  If $\ E\ $ is $\ S$-stable, then $\ X^{\mathrm{eq}}\ $ is $\ S$-stable. Set $\ X_k= e^{-1}(k) \ $ for $\ k \in [0,L],\ $ then 
\begin{equation}\label{z1}
|\llbracket E,X_k\rrbracket| \ = \ |X_k|-|X_{k+1}|
\ \ \ \ \ \ \mbox{and} \ \ \ \ \ \ 
|X_k| \ = \ \sum_{s=k}^{L}|\llbracket E,X_s\rrbracket|.
\end{equation}
\begin{equation}\label{w222}
\mbox{Furthermore} \ \ \ T_{ik}\ = \ \delta_{i-1,k} \ \  \ \mbox{for} \ \ \ i\geq 1,\ \ \ 
\mbox{and} \ \ \   \displaystyle T_{0k}\ = \ \frac{|X_k|-|X_{k+1}|}{|E|}.  
\end{equation}

Assume that $\ e\alpha E=[0,L]. \ $\\

\noindent b) $ X^{\mathrm{eq}}=E,\ $ 
$ \ X^{\mathrm{neq}}=X\setminus E,\  $ $\ D= \{i \in X^{\mathrm{eq}} \ |\ \alpha i \in X^{\mathrm{neq}} \}, \ $ entropy is strictly
increasing on $\ X^{\mathrm{neq}}.\  $
Furthermore, $\ \ |D|=|X_1|,\ \ |I|=| X^{\mathrm{neq}}|,\ \ 
|C|=|X^{\mathrm{eq}}|-|X_1|.$   \\

\noindent c) The mean value $\ \langle A \rangle\ $  of  $\ A \ $ is given by
\begin{equation}\label{zz-1}
\langle A \rangle  \ =  \ \frac{1}{|X|}\sum_{k=1}^{L}k|X_k|
\ = \ \frac{|X^{\mathrm{eq}}|}{|X|}\sum_{k=1}^{L}\binom{k+1}{2}[\alpha]_{0,k}.  
\end{equation}
}
\end{thm}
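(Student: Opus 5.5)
The plan is to first pin down the combinatorics of the reaching-time map $\ e\ $ and then read off every item from it. Since $\ (X,\alpha)\ $ is $\ E$-bound, $\ e\ $ is well defined. For $\ i\notin E\ $ we have $\ e(\alpha i)=e(i)-1$. Hence $\ \alpha\ $ maps $\ X_k\ $ bijectively onto $\ X_{k-1}\cap\alpha(X\setminus E)\ $ for $\ k\geq 1$. Dually, for $\ j\in X_k\ $ with $\ k\geq 0$, the preimage $\ \alpha^{-1}j\ $ is either in $\ E\ $ or in $\ X_{k+1}$.

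This immediately yields $\ T_{ik}=\delta_{i-1,k}\ $ for $\ i\geq 1\ $ (every microstate of $\ X_i\ $ goes to $\ X_{i-1}$). It also gives a partition $\ X_k=\alpha\llbracket E,X_k\rrbracket\sqcup \alpha X_{k+1}$, so $\ |X_k|=|\llbracket E,X_k\rrbracket|+|X_{k+1}|$. That is the first identity in (\ref{z1}). Telescoping with $\ X_{L+1}=\emptyset\ $ gives the second. Then $\ T_{0k}=|\llbracket E,X_k\rrbracket|/|E|\ $ follows from (\ref{T}).

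For a), I need $\ |E|\geq|X_k|\ $ for all $\ k$. By the injection above, $\ |X_k|\leq|X_{k-1}|$, so $\ |X_0|=|E|\ $ is maximal and $\ E\ $ is an equilibrium macrostate. On a non-equilibrium microstate $\ i\in X_k$, with $\ k\geq1\ $ and $\ |X_k|<|E|$, we move to $\ X_{k-1}\ $ with $\ |X_{k-1}|\geq|X_k|$. I read "entropy always increases" as non-decreasing; if the strict version is intended, note that it is exactly what b) sharpens. For $\ S$-stability: $\ X^{\mathrm{eq}}\ $ is the union of $\ E\ $ with those $\ X_k\ $ of size $\ |E|$. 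Any $\ X_k\ $ of size $\ |E|\ $ forces $\ |X_j|=|E|\ $ for $\ j\leq k$, so $\ X^{\mathrm{eq}}=e^{-1}[0,k_0]$. A microstate entering $\ X^{\mathrm{eq}}\ $ from outside enters $\ X_{k_0}\ $ and descends to $\ E\ $ in $\ k_0\ $ steps. If it in fact enters $\ E\ $ from outside, $\ S$-stability of $\ E\ $ applies. The case of entry into some $\ X_j\ $ with $\ j\geq1$ followed by descent and entry into $\ E\ $ from inside $\ X^{\mathrm{eq}}$ is the main obstacle: the stability of $\ E\ $ only controls entry from $\ X\setminus E$. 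I will try to argue that under $\ |X_k|=|E|\ $ every $\ \alpha$-arrival into $\ X_{k-1}\ $ comes from $\ X_k$, hence no element of $\ E\ $ goes to $\ X_j\ $ for $\ j<k_0$. Consequently points stay inside $\ X^{\mathrm{eq}}\ $ until they pass through $\ E$, and time spent in the $\ X_j\ $ layers only adds to the stability count.

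For b), the hypothesis $\ e\alpha E=[0,L]\ $ says $\ \llbracket E,X_k\rrbracket\neq\emptyset\ $ for every $\ k$. So by (\ref{z1}) the sequence $\ |X_k|\ $ is strictly decreasing. Then $\ X^{\mathrm{eq}}=E\ $ and entropy strictly increases along $\ X_k\to X_{k-1}$, giving $\ I=X\setminus E$. Microstates of $\ E\ $ either stay in $\ E\ $ (constant entropy) or jump to some $\ X_k$, $\ k\geq1\ $ (decrease). So $\ D=\{i\in E\mid \alpha i\notin E\}\ $ and $\ |D|=\sum_{k\geq1}|\llbracket E,X_k\rrbracket|=|X_1|$. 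Finally $\ |C|=|E|-|X_1|$.

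For c), strict monotonicity shows $\ A(i)=e(i)\ $ for $\ i\notin E$. For $\ i\in E$, $\ A(i)=0\ $ since the next step either stays in $\ E\ $ or decreases entropy. So $\ \langle A\rangle=\frac1{|X|}\sum_k k|X_k|$. Substituting $\ |X_k|=\sum_{s\geq k}|\llbracket E,X_s\rrbracket|$, exchanging sums, and using $\ \sum_{k\leq s}k=\binom{s+1}{2}\ $ together with $\ |\llbracket E,X_s\rrbracket|=|E|[\alpha]_{0,s}\ $ gives (\ref{zz-1}).
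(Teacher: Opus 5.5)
Apart from one step, your proposal is the paper's proof. It uses the same layer decomposition $X_k=\alpha X_{k+1}\sqcup(X_k\cap\alpha E)$, telescopes with $X_{L+1}=\emptyset$, gets strict decrease of $|X_k|$ from $e\alpha E=[0,L]$, and for c) uses $A=e$ followed by an exchange of sums. You should also record, as the paper does, that $e$ maps onto $[0,L]$: follow the orbit of a point with $e(i)=L$. Otherwise some $X_i$ might be empty and $T_{ik}$ would not be defined.

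The step that fails as written is the $S$-stability claim, and the obstacle you identify there is not real.

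\begin{itemize}
\item \textbf{Why stability of $E$ applies directly.} A point $i$ enters $X^{\mathrm{eq}}$ at $X_{k_0}$ and descends. It then enters $E$ from $\alpha^{k_0-1}(i)\in X_1$, and $X_1\subseteq X\setminus E$. So $\alpha^{k_0}(i)\in E\cap\alpha(X\setminus E)$, and $S$-stability of $E$ applies at that point. This gives $\alpha^u(i)\in X_{k_0-u}\subseteq X^{\mathrm{eq}}$ for $0\le u\le k_0$, and $\alpha^{k_0+u}(i)\in E$ for $0\le u\le S$. That is the paper's argument.
\item \textbf{Where the confusion arises.} You conflated ``entering $E$ from inside $X^{\mathrm{eq}}$'' with ``entering $E$ from inside $E$''.
\item \textbf{Why your detour does not close the gap.} Knowing that no point of $E$ lands in $X_j$ for $j<k_0$ gives no control over how long the orbit stays in $X^{\mathrm{eq}}$ once it reaches $E$. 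Your final sentence (``time spent in the layers only adds to the stability count'') tacitly uses the very application of $S$-stability you had declared unavailable.
\end{itemize}

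Your observation at $j=0$ actually shows the case you feared is vacuous. If $k_0\ge 1$, then $|\llbracket E,E\rrbracket|=|X_0|-|X_1|=0$. So every point of the nonempty set $E$ is entered from $X\setminus E$ and leaves $E$ at the next step, which contradicts $S$-stability for any $S\ge 1$. Hence $S$-stability of $E$ with $S\geq 1$ already forces $X^{\mathrm{eq}}=E$, while $0$-stability is an empty condition.
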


\begin{proof}[\textbf{Proof}]
 a) Note that if $\ i \in X_k,\ $ with $\ k>0, \ $ then 
 $\ e(\alpha i)= e(i)-1, \ $ and thus $\ \alpha i \in X_{k-1}.\ $ 
Take $\ i \in X\ $ such that $\ e(i)=L, \ $ then 
 $\ e(\alpha^k i)=L-k \ $ for $\ k\in [0,L],\ $ so $\ e \ $ 
is surjective. We have maps $\ X_L  \xrightarrow{\alpha} X_{L-1}  \xrightarrow{\alpha} 
\cdots X_1 \xrightarrow{\alpha} X_0, \ $ thus
\begin{equation}\label{t56}
 |X_L| \ \leq \  |X_{L-1}| \ \leq \  
\cdots \ \leq \ |X_1| \ \leq \ |X_0|
\end{equation}
since $\ \alpha \ $ is bijective. 
Note that $\ X_0=E.\ $
Setting $\ X_{L+1}=\emptyset,\ $ we get 
\begin{equation}\label{t57}
\ \   X_k\ = \ \alpha X_{k+1}  \  \sqcup \ 
\big(X_k \cap \alpha E\big) \ \ \ \ \mbox{for} \ \ \ \
0\leq k\leq L.
\end{equation}
Thus  $\ |X_{k+1}| < |X_{k}|\ $ if and only if 
$\ X_k \cap \alpha E\neq \emptyset .\ $ Identities (\ref{z1}) follow by recursion from (\ref{t57}) since
\begin{equation}\label{zz1}
|X_k|\ = \ |\alpha X_{k+1}|  \  + \ 
|X_k \cap \alpha E\big| \ = \ 
 |X_{k+1}|  \  + \ |\llbracket E,X_k\rrbracket |.
\end{equation}
From (\ref{t56}) we get that $\  T_{ik}=\delta_{i-1,k} \ $ for $\  i\geq 1. \ $ Identity $\  \displaystyle [\alpha]_{0k}= \frac{|X_k|-|X_{k+1}|}{|E|}\ $  follows from (\ref{zz1}).\\

Assume that $\ E \ $ is $\ S$-stable. If $\ i \in X^{\mathrm{eq}}, \ $ then $\ i \in X_k \ $ where
$\ |X_k|=|E|. \ $  Therefore $\ \alpha^{u}(i) \in X_{k-u} \subseteq X^{\mathrm{eq}} \ $
for $\ 0 \leq u \leq k-1, \ $ and  $\ \alpha^{u}(i) \in X_0=E \subseteq X^{\mathrm{eq}} \ $
for $\ k \leq u \leq S+k. \ $ Thus $\ X^{\mathrm{eq}} \ $ is
$\ S$-stable.\\

For the remainder of the proof we assume that $\ e\alpha E=[0,L]. \ $ \\

\noindent b) By item a) we have 
$ \ X^{\mathrm{eq}} = X_0=E\ $ and $\  X^{\mathrm{neq}} = \coprod_{k=1}^{L}X_k. \  $  Entropy is strictly increasing on $\ X^{\mathrm{neq}}, \ $ indeed if 
$ \ i \in X_{k+1}\ $ then 
$$ \ S(i) \ = \ \mathrm{ln}|X_{k+1}| \ < \ \mathrm{ln}|X_k|\ = \ S(\alpha i). \ 
\ \ \ \ \ \ \mbox{We have that}$$ 
$$ DX \  = \ \{i \in X \ |\ e(\alpha i)>e(i) \} \ = \
\{i \in X^{\mathrm{eq}} \ |\ e(\alpha i)>0\} \ = \
\{i \in X^{\mathrm{eq}} \ |\ \alpha i \in X^{\mathrm{neq}} \}.
$$ 
From (\ref{z1}) we get that
$$|D|\ = \ |\{i \in X^{\mathrm{eq}} \ |\ \alpha i \in X^{\mathrm{neq}} \}| \ = \
\sum_{k=1}^{L}|\llbracket X^{\mathrm{eq}},X_k\rrbracket| \ = \
 \sum_{k=1}^{L}|X_k|-|X_{k+1}| \ = \ |X_1|.$$
Also $\ \  |I| =  |X^{\mathrm{neq}}|,\ \ $
and $\ \ |C|=|X^{\mathrm{eq}}|-|D| \ = \ |X^{\mathrm{eq}}|-|X_1|.$ \\

\noindent c) The key observation is that 
$\ A  =    e  \ $
by (\ref{w222}). This immediately implies the first identity in
(\ref{zz-1}),  and then the second identity follows from (\ref{z1}), indeed
$$\langle A \rangle  \ =  \ \frac{1}{|X|}\sum_{k=1}^{L}k|X_k|
\ = \  \ \frac{1}{|X|}\sum_{k=1}^{L}k\bigg(\sum_{s=k}^{L}|\llbracket X^{\mathrm{eq}},X_s\rrbracket|\bigg)
\ =  $$
$$  \frac{|X^{\mathrm{eq}}|}{|X|}\sum_{k=1}^{L}\sum_{s=k}^{L}k[\alpha]_{0,s}
\ = \ \frac{|X^{\mathrm{eq}}|}{|X|}
\sum_{s=1}^{L}\big(\sum_{k=1}^{s}k\big)[\alpha]_{0,s}
\ = \
\frac{|X^{\mathrm{eq}}|}{|X|}\sum_{s=1}^{L}\binom{s+1}{2}[\alpha]_{0,s}.$$

\end{proof}

\begin{rem}
{\em If we think of $\ E \ $ as the equilibrium, the condition $\ e\alpha E=[0,L] \ $ means that if there is a perturbation of the equilibrium of order $\ L\ $ 
( a path of length $\ L+1\ $ from the equilibrium to the equilibrium via non-equilibrium points, ) then there are perturbations of the equilibrium for all smaller orders. Indeed it is reasonable to expect that shorter perturbations are
more likely than longer perturbations. 
  }
\end{rem}

\begin{thm}\label{g8}
{\em Let $\ (X,\alpha,r)\ $ be a $\ E$-bound reversible system such that
$\ E\subseteq X\ $ is $\ r$-invariant.  Consider the reversible system $ \ (X,[0,L],e,\alpha,r). \ $ \\

\noindent a) Let $\ i_{k+1}\to i_k \to \cdots \to i_1 \to i_0\ $  be an $\ \alpha$-path with $\ i_0, i_{k+1} \in E \ $ and 
$\ i_u \in X\setminus E \ $  for $\ u \in [1,k].\ $
Then $\ ri_{0}\to ri_1 \to \cdots \to ri_k \to ri_{k+1}\ $ is also 
an $\ \alpha$-path,  $\ ri_{0}, ri_{k+1} \in E \ $ and 
$\ ri_u \in X\setminus E \ \ $  for $\ \ u \in [1,k].\ $ \\

\noindent b)  $ S(ri_u) = S(i_u)\ \ \  \mbox{if and only if} \ \ \  |X_u|  =  |X_{k-u+1}|\ \ $ for $\ \ u \in [1,k].\ $\\

\noindent c) If $\ e\alpha X^{\mathrm{eq}}=[0,L], \ $ then
$ \displaystyle S(ri_u) = S(i_u)\ \ \   \mbox{if and only if} \ \ \ 
 u  = \frac{k+1}{2}, \ $ i.e. if and only if $\ i_u\ $
is equidistant to $\ X^{\mathrm{eq}} \ $ in the forward and the backward directions.
}
\end{thm}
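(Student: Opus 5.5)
Part a) rests on the reversion identity $\alpha^{-1}=r\alpha r$, which is equivalent to $r\alpha r\alpha=1$, i.e. $r\alpha$ is an involution. If $\alpha i_{u+1}=i_u$, then $\alpha r i_u = r\alpha^{-1} i_u = r i_{u+1}$. So applying $r$ to the path $i_{k+1}\to\cdots\to i_0$ gives the reversed $\alpha$-path $ri_0\to ri_1\to\cdots\to ri_{k+1}$. Since $E$ is $r$-invariant and $r$ is a bijection, $X\setminus E$ is also $r$-invariant. Hence $ri_0,ri_{k+1}\in E$ and $ri_u\in X\setminus E$ for $u\in[1,k]$.

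For b) I would compute the reaching times on both paths using the definition (\ref{eq1}) of $e$. On the original path, $i_u$ reaches $E$ after exactly $u$ forward steps, since $i_{u-1},\dots,i_1\notin E$ and $i_0\in E$. Thus $e(i_u)=u$ and $i_u\in X_u$, so $S(i_u)=\mathrm{ln}|X_u|$. On the reversed path, $ri_u$ moves forward through $ri_{u+1},\dots,ri_k\notin E$ and lands at $ri_{k+1}\in E$ after $k+1-u$ steps. Thus $e(ri_u)=k-u+1$ and $S(ri_u)=\mathrm{ln}|X_{k-u+1}|$. Since $\mathrm{ln}$ is injective, b) follows.

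For c) I would use the hypothesis $e\alpha X^{\mathrm{eq}}=[0,L]$ together with Theorem \ref{g0}. First I need $X^{\mathrm{eq}}=E$; this should follow from Theorem \ref{g0}-a, where $E=X_0$ is an equilibrium macrostate, combined with the strict chain below. The hypothesis says every $k\in[0,L]$ satisfies $X_k\cap\alpha E\neq\emptyset$. By (\ref{t57}) and (\ref{zz1}) this gives the strict chain $|X_L|<|X_{L-1}|<\cdots<|X_0|$, so $k\mapsto|X_k|$ is injective on $[0,L]$. Note also that $u$ and $k-u+1$ both lie in $[1,k]\subseteq[1,L]$, because $e(i_k)=k\le L$. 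Injectivity then turns the condition in b) into $u=k-u+1$, i.e. $u=\frac{k+1}{2}$. In the last sentence of c), the forward distance of $i_u$ to $X^{\mathrm{eq}}$ is $u$, and the backward distance is $k+1-u$ (back to $i_{k+1}$), so equality means exactly $u=\frac{k+1}{2}$.

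The main obstacle is the bookkeeping in b): the reversed path must be read in the right direction, and in particular one must check that $ri_u$ does not hit $E$ earlier than $ri_{k+1}$. This is guaranteed by the $r$-invariance of $X\setminus E$ from a). A secondary point in c) is confirming that $X^{\mathrm{eq}}=E$, so that the hypothesis matches the one used in Theorem \ref{g0}-b.
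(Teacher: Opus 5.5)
Your parts a) and b) are the paper's argument exactly: $\alpha r i_u = r\alpha^{-1}i_u = r i_{u+1}$, together with the $r$-invariance of $E$ and $X\setminus E$, and then reading off $e(i_u)=u$ and $e(ri_u)=k-u+1$.

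The gap is in c), and it is circular. You turn the hypothesis $e\alpha X^{\mathrm{eq}}=[0,L]$ into ``$X_k\cap\alpha E\neq\emptyset$ for every $k$'', which is really $e\alpha E=[0,L]$. That step needs $X^{\mathrm{eq}}=E$. But you plan to get $X^{\mathrm{eq}}=E$ from the strict chain, and the strict chain itself comes from $e\alpha E=[0,L]$.

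This cannot be repaired, because the literal hypothesis is strictly weaker and c) fails under it. Take $X=\{a,b,c\}$, $\alpha: a\mapsto b\mapsto c\mapsto a$, $E=\{a\}$, and $r$ fixing $a$ and swapping $b,c$. One checks $r^2=1$, $r\alpha r=\alpha^{-1}$ and $rE=E$. Then $X_0=\{a\}$, $X_1=\{c\}$, $X_2=\{b\}$ all have one element. Hence $X^{\mathrm{eq}}=X$ and $e\alpha X^{\mathrm{eq}}=[0,2]$, while $e\alpha E=\{2\}$. Consider the path $i_3=a\to i_2=b\to i_1=c\to i_0=a$, so $k=2$, and take $u=1$. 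Then $S(ri_1)=S(b)=0=S(c)=S(i_1)$, although $u\neq\frac{3}{2}$.

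The paper's proof of c) simply begins ``Assume $e\alpha E=[0,L]$'', so that is the intended hypothesis. Under it your argument is correct and coincides with the paper's. By (\ref{zz1}), $|X_k|-|X_{k+1}|=|\llbracket E,X_k\rrbracket|>0$ for every $k\in[0,L]$, so $k\mapsto |X_k|$ is injective and the condition in b) becomes $u=k-u+1$. In that setting $X^{\mathrm{eq}}=E$ is a consequence of the hypothesis (Theorem \ref{g0}-b), not a prerequisite. It is needed only for the closing reformulation ``equidistant to $X^{\mathrm{eq}}$''.
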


\begin{proof}[\textbf{Proof}]
 a) $ E\ $ and $\ X\setminus E\ $ are $\ r$-invariant, and we have
$\ \alpha r i_u\ =\ r\alpha^{-1}i_u\ = \ ri_{u+1}\ $
for $\ u\in [0,k]. \  $ \\

\noindent b) By item a) it follows that $\ e(i_u)=u \ $ and $\ e(ri_u)=k-u+1, \ $ and thus 
$S(i_u)= \mathrm{ln}|X_u| \ $ and $\ S(ri_u)= \mathrm{ln}|X_{k-u+1}|\ $ for $\ u \in [1,k]. \ $\\

\noindent c) Assume $\ e\alpha E=[0,L], \ $ then
$\ |X_u|  =  |X_{k-u+1}| \ $ if and only if $\ u  =  k-u+1,\ $ i.e.
if and only if $\ \displaystyle  u  = \frac{k+1}{2}. \ $ The result follows from
item b).
\end{proof}

Given $\ E$-bound system $\ (X,\alpha)\ $ we enhance it
to an  $\ \widehat{E}$-bound reversible system 
$\ (\widehat{X},\widehat{\alpha},r)\ $ where 
$\ \widehat{X} =   X\times \{-1,1\}, \ $ $\ \widehat{E}  =   E\times \{1\} \sqcup E\times \{-1\},\ $ the map
$\ \widehat{\alpha}:\widehat{X} \to \widehat{X} \ $ is given by
$\ \alpha(i,s)\ = \ (\alpha^{s}i,s), \ $ 
and the reversion map $\ r: \widehat{X} \to \widehat{X} \ $ is given
by $\ r(i,s) \ = \ (i,-s). \ $ We check that $\ r\widehat{\alpha} r = \widehat{\alpha}^{-1}: \ $ 
\begin{equation}\label{z100}
r\widehat{\alpha} r(i,s) \ = \ r\widehat{\alpha}(i,-s)
\ = \ r(\alpha^{-s}i,-s) \ = \ (\alpha^{-s}i,s) \ = \ 
\widehat{\alpha}^{-1}(i,s).
\end{equation}
Next we check that the new system is $\ \widehat{E}$-bound. Note that
$\ \widehat{\alpha}^{k}(i,s)\ = (\alpha^{sk}(i),s) \ $ belongs to $\ \widehat{E} \ $ if and only if $\ \alpha^{sk}(i) \ $ belong to $\ E. \ $ We can always find $\ k\geq 0\ $ fulfilling the latter condition since $\ (X,\alpha)\ $ is $\ E$-bound.\\

Applying Theorem \ref{g8} we get reversible system 
$\ ( \widehat{X},  [0,\widehat{L}],  \widehat{e}, \widehat{\alpha},  r  ) \ $  where
$$\widehat{e}(i,s)\ = \ \mbox{smallest} \ \ k\geq 0 \  \ \mbox{such\ that \ } \ \widehat{\alpha}^k(i,s) \in \widehat{E}, $$
$$\mbox{and} \ \ \ \ \ \  \widehat{L} \ = \ \underset{(i,s)\in \widehat{X}}{\mathrm{max}}\ \widehat{e}(i,s). \ \ \ \ \ \  \ 
  \ \ \ \ \ \  \ \   $$

\begin{thm}\label{bb1}
{\em 
System $ \ ( \widehat{X}, [0,\widehat{L}],  \widehat{e}, \widehat{\alpha},  r   ) \ $ has the following properties:\\

\noindent a) $\widehat{E} \ $ is $\ r$-invariant.   $\ \ \widehat{E}\ $ is $\ S$-stable if and only if $\ E \ $ is $\ S$-stable.
We have  $$\ \  \widehat{L} \ = \ \underset{(i,s)\in \widehat{X}}{\mathrm{max}}\ \widehat{e}(i,s)
  \ = \ \underset{i\in X}{\mathrm{max}}\ e(i)
  \ = \ \underset{i\in X}{\mathrm{max}}\ e_{-1}(i)\ = \ L,\ \ $$
  where $\ e_{-1}(i)\ $ is the smallest $\ k\geq 0 \ $ such that
$\ \alpha^{-k}(i)\in E.  \ $ 
We have that $ \ \widehat{X}_k \ = \ X_k \sqcup  X_{-k}, \ $ where $\ X_{-k} = \{i\in X \ | \ e_{-1}(i)=k \} \ $ and  $\ |\widehat{X}_k|=2|X_k|. \ $\\

\noindent b)  Let $\ \ (i_{k+1},s)\to (i_k,s) \to \cdots \to (i_1,s) \to (i_0,s) \ \ $  be an $\ \widehat{\alpha}$-path with microstates $\ (i_0,s), (i_{k+1},s) \ $ in $\ \widehat{E}, \ \ $ and 
$\ \ (i_u,s) \in \widehat{X}\setminus \widehat{E} \ \ $  for $\ \ u \in [1,k].\ \ $  We have that
$$ \ S(r(i_u,s))=S(i_u,s) \ \ \ \ \mbox{if and only if} \ \ \ \
|X_u|  =  |X_{k-u+1}|.$$

\noindent c) $e\alpha E=[0,L]\ \ \ $ if and only  $\ \ \ \widehat{e}\widehat{\alpha} \widehat{E}=[0,L]. \ $ \\

Assume that $\ e\alpha E=[0,L]. \ $ \\

\noindent d) $\widehat{X}^{\mathrm{eq}}=\widehat{E},\  $  
$ \widehat{X}^{\mathrm{neq}}=\widehat{X}\setminus \widehat{E},\  $ 
$\ \widehat{D}= \{i \in \widehat{X}^{\mathrm{eq}} \ |\ \alpha i \in \widehat{X}^{\mathrm{neq}} \}, \  $ entropy is strictly increasing on $\ \widehat{X}^{\mathrm{neq}},\ $
and 
$\ \ \ |[\widehat{X}^{\mathrm{eq}},\widehat{X}_k]| \ = \ 2(|X_k|-|X_{k+1}|)\ \ \  \mbox{for} \ \ \  k\in[0,L].\ $ 
 Also
$ \ S(r(i_u,s))=S(i_u,s) \ \ $ if and only if 
$\ \ \displaystyle u= \frac{k+1}{2}. \ \ $ Furthermore
$$\ \ \ \displaystyle \frac{|\widehat{D}|}{|\widehat{X}|}=\frac{|D|}{|X|}, \ \ \ 
\frac{|\widehat{I}|}{|\widehat{X}|}=\frac{|I|}{|X|}, \ \ \ 
\frac{|\widehat{C}|}{|\widehat{X}|}=\frac{|C|}{|X|}, \ \ \ \
\mbox{and} \ \ \ \ \langle \widehat{A} \rangle  =  \langle A \rangle. \ \  $$
}  
\end{thm}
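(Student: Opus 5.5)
The whole argument rests on one identity: the hatted reaching time is the forward reaching time on the sheet $s=1$ and the backward reaching time on the sheet $s=-1$, i.e.
$$\widehat{e}(i,1)=e(i), \qquad \widehat{e}(i,-1)=e_{-1}(i).$$
This holds because $\widehat{\alpha}^k(i,s)=(\alpha^{sk}i,s)$ and $(\alpha^{sk}i,s)\in\widehat{E}$ if and only if $\alpha^{sk}i\in E$. Consequently $\widehat{X}_k=X_k\times\{1\}\sqcup X_{-k}\times\{-1\}$.

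For item a), I would proceed as follows.

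\emph{Invariance.} $r$-invariance of $\widehat{E}$ is immediate, since $r$ only flips $s$.

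\emph{Sizes of level sets.} To prove $|X_{-k}|=|X_k|$, I would argue inside each $\alpha$-cycle. Cut the cycle at its points of $E$ into maximal runs of non-$E$ points. A run of length $m$ lying strictly between two points of $E$ contributes exactly one point to $X_k$ for each $1\le k\le m$: counting forward distance to $E$, the point at distance $k$ from the forward endpoint. Counting backward distance instead, the same run again contributes exactly one point to $X_{-k}$ for each such $k$. For $k=0$ both level sets equal $E$. Summing over runs gives $|X_{-k}|=|X_k|$, hence $|\widehat{X}_k|=2|X_k|$.

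\emph{Maximal reaching time.} The maxima of $e$ and $e_{-1}$ are both the maximal run length, so $\widehat{L}=\max e=\max e_{-1}=L$.

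\emph{Stability.} For $S$-stability I would compare entry points. The entry points of the sheet $s=1$ are the entry points of $E$ under $\alpha$. On the sheet $s=-1$, the dynamics is $\alpha^{-1}$, so entry points correspond to exit points of $E$ under $\alpha$, and stability there amounts to staying in $E$ for $S$ steps backward. I expect this to be the main obstacle: the equivalence with $S$-stability of $E$ needs the fact that, along each cycle, the blocks of consecutive $E$-points are traversed in both directions, so every block has both an entry and an exit. Since the condition "remain in $E$ for $S$ further steps" is a condition on the length of a block, a block is long enough forward exactly when it is long enough backward. I would make this precise blockwise.

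For item b), I would apply Theorem \ref{g8}-b to the reversible system $(\widehat{X},\widehat{\alpha},r)$, which is legitimate since $\widehat{E}$ is $r$-invariant. This gives $S(r(i_u,s))=S(i_u,s)$ if and only if $|\widehat{X}_u|=|\widehat{X}_{k-u+1}|$. By item a), $|\widehat{X}_u|=2|X_u|$ and $|\widehat{X}_{k-u+1}|=2|X_{k-u+1}|$, so the condition becomes $|X_u|=|X_{k-u+1}|$.

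For item c), I would use the identity $\widehat{e}\widehat{\alpha}(i,s)=e_{s}(\alpha^{s}i)$, writing $e_1=e$. On the sheet $s=1$ this gives $\widehat{e}\widehat{\alpha}(E\times\{1\})=e\alpha E$. On the sheet $s=-1$ it gives $\widehat{e}\widehat{\alpha}(E\times\{-1\})=e_{-1}\alpha^{-1}E$. The run-length description shows that $e_{-1}\alpha^{-1}E$ is the set of lengths of runs following a point of $E$, together with $0$, and this is exactly $e\alpha E$. Hence $\widehat{e}\widehat{\alpha}\widehat{E}=e\alpha E$, which gives the equivalence.

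For item d), I would apply Theorem \ref{g0}-b,c and Theorem \ref{g8}-c to the hatted system, using item c) to verify their hypothesis. From (\ref{z1}),
$$|\llbracket\widehat{X}^{\mathrm{eq}},\widehat{X}_k\rrbracket|=|\widehat{X}_k|-|\widehat{X}_{k+1}|=2\big(|X_k|-|X_{k+1}|\big).$$
For the ratios, every set on both sides is expressed through the $|X_k|$: $|\widehat{D}|=|\widehat{X}_1|=2|X_1|$, and similarly $|\widehat{I}|=2|I|$, $|\widehat{C}|=2|C|$, while $|\widehat{X}|=2|X|$. For the mean arrow of time, formula (\ref{zz-1}) gives
$$\langle\widehat{A}\rangle=\frac{\sum_k k\,2|X_k|}{2|X|}=\langle A\rangle.$$
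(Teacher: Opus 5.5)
Your proposal is correct and follows the paper's proof essentially step by step. Your run-length count along cycles is the paper's decomposition $X_k=\coprod_{u\ge k}\alpha^{u-k}(X_u\cap\alpha E)$, $X_{-k}=\coprod_{u\ge k}\alpha^{k-1}(X_u\cap\alpha E)$, with runs indexed by their first points $X_u\cap\alpha E$. Item c) is the same path-reversal argument. Items b) and d) are the same applications of Theorems \ref{g8} and \ref{g0} to the hatted system. Your blockwise argument for $S$-stability supplies a justification the paper only asserts.

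One small slip is harmless. $0$ belongs to $e_{-1}\alpha^{-1}E$, and to $e\alpha E$, only when some point of $E$ has its $\alpha$-successor in $E$. That condition is the same for both sets, so the equality $\widehat{e}\widehat{\alpha}\widehat{E}=e\alpha E$ you need still holds.
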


\begin{proof}[\textbf{Proof}] 
a) $\widehat{E}\ $ is $\ r$-invariant since 
$\ r(E\times \{s\}) =  E\times \{-s\} \ $ for $\ s\in \{-1,1\}.\ \ $
Looking at microstates $\ (i,1) \ $
we see that if $\ \widehat{E}\ $ is $\ S$-stable then $\ E \ $ is $\ S$-stable.
Conversely if $\ E \ $ is $\ S$-stable, then $\ E \ $ is also $\ S$-stable
with dynamics given by $\ \alpha^{-1},\ $ thus $\widehat{E}\ $ is $\ S$-stable.\ \  Note that $\ \widehat{e}(i,1)=e(i) \ $ and $\ \widehat{e}(i,-1)=e_{-1}(i). \ $ \ Furthermore  
$\ L\ =\ \underset{i\in X}{\mathrm{max}}\ e(i) \ = \ \underset{i\in X}{\mathrm{max}}\ e_{-1}(i), \ $ indeed for
$\ i_0, i_{k+1} \in E \ $ and  $\ i_s\in  X\setminus E \ $  for $\ s \in [1,k],\ $ we have $\ \alpha$-path
$\ i_0 \to i_1 \to \cdots \to i_k \to i_{k+1}\ $ if and only if we have  $\ \alpha^{-1}$-path $\ i_{k+1}\to i_k \to \cdots \to i_1 \to i_0. \ $
Therefore
$$\underset{(i,s)\in X}{\mathrm{max}}\ \widehat{e}(i,s) \ = \
\mathrm{max}\bigg\{\ \underset{i\in X}{\mathrm{max}}\ e(i) \ , \ \underset{i\in X}{\mathrm{max}}\ e_{-1}(i)\ \bigg\}
\ = \ \mathrm{max}\{ L , L \} \ = \ L.\ $$ 
We have that $\ (i,s)\in \widehat{X}_k\ $ if and only if either $\ s=1\ $
and $\ e(i)=k,\ $ or $\ s=-1\ $
and $\ e_{-1}(i)=k,\ $ thus $\ \widehat{X}_k = X_{k} \sqcup X_{-k}. \ $
Clearly $\ X_{0} = X_{-0} = E. \ $
Note that
$$X_{1}  =  \coprod_{u=1}^{L}\alpha^{u-1}(X_u\cap\alpha E), \ \ \
X_{2}  =  \coprod_{u=2}^{L}\alpha^{u-2}(X_u\cap\alpha E), \ \ \
X_{3}  =  \coprod_{u=3}^{L}\alpha^{u-3}(X_u\cap\alpha E), \ \ \ \cdots $$ 
$$X_{-1}  =  \coprod_{u=1}^{L}(X_u\cap\alpha E), \ \ \
X_{-2}  =  \coprod_{u=2}^{L}\alpha(X_u\cap\alpha E), \ \ \
X_{-3}  =  \coprod_{u=3}^{L}\alpha^{2}(X_u\cap\alpha E), \ \ \ \cdots $$ 

One can visualize this identities with the help of a diagram,
below we display the case $\ L=5. \ $
For $\ 1\leq j \leq L\ $ and $\ 1 \leq i \leq L-j+1, \ $ the block in  row $\ i, \ $
column $\ j ,\ $ represents the set microstates  $\ \ \alpha^{L-i-j+1}(X_{L-j+1}\cap\alpha E), \ \ $ while the entries 
$\ s,-t \ $ tell that microstates in this block 
need $\ s\ $ applications of $\ \alpha, \ $ and $\ t\ $ applications of 
$\ \alpha^{-1},\ $ to enter the equilibrium, respectively. \\

$\ytableausetup{centertableaux, boxsize=1.2cm}
\begin{ytableau}
1,-5 & 1,-4 & 1,-3    & 1,-2 & 1,-1\\
2,-4 & 2,-3 & 2,-2  & 2,-1  \\
3,-3 & 3,-2 & 3,-1    \\
4,-2 & 4,-1  \\
5,-1 
\end{ytableau}$

\

\

\noindent In general for $\ k \in [1,L] \ $ we have that 
$$X_{k}  =  \coprod_{u=k}^{L}\alpha^{u-k}(X_u\cap\alpha E)
\ \ \ \ \ \mbox{and} \ \ \ \ \ 
X_{-k}  =  \coprod_{u=k}^{L}\alpha^{k-1}(X_u\cap\alpha E).$$  \\
Since $\ |\alpha^{k-1}(X_u\cap\alpha E)| \ = \  |X_u\cap\alpha E| \ = \
| \alpha^{u-k}(X_u\cap\alpha E)|, \ $  we get
$\ |X_{-k}| = |X_{k}|,\ $ and therefore 
$\ |\widehat{X}_{k}|\ = \ |X_k|\ + \ |X_{-k}|\ = \ 2|X_k|.$\\

\noindent b) Theorem \ref{g8}-b  gives $ \ S(r(i,s))=S(i,s) \ \ \  \mbox{if and only if} \ \ \  |\widehat{X}_s|  =  |\widehat{X}_{k-s+1}|. \ \ $ The result follows by  item a) since $\ |\widehat{X}_{s}| = 2|X_s|\ $ and
$\ |\widehat{X}_{k-s+1}| =  2|X_{k-s+1}|.\ $  \\

\noindent c) Assume  $\ e\alpha E=[0,L]m \ $ and  take $\ k \in [0,L]. \ $ There is
$\ i\in E\ $ such that $\ e(\alpha i)=k,\ $ then
$\ \widehat{e}\widehat{\alpha}(i,1)=k ,\ $ 
and thus $\  \widehat{e}\widehat{\alpha} \widehat{E}=[0,L]. \ $ 
Conversely, assume that  $\  \widehat{e}\widehat{\alpha} \widehat{E}=[0,L] \ $
and take $\ k\in [0,l], \ $ then either there is $\ i \in E \ $ such
that $\ e(\alpha i)=\widehat{e}\widehat{\alpha}(i,1)=k\ $ and thus
$\ k\in  e\alpha E,\ $ or there is $\ i \in E \ $ such
that $\ \widehat{e}\widehat{\alpha}(i,-1)=k,\ $ thus we have $\ \widehat{\alpha}$-path
$\ (i_0,-1) \to (i_1,-1) \to \cdots \to (i_k,-1) \to (i_{k+1},-1) \ $
where $\ i_0,i_{k+1} \in E\ $ and $\ i_u \notin E\ $ for $\ u\in [1,k], \ $
and thus the $\ \alpha$-path 
$\ i_{k+1} \to i_k \to \cdots \to i_1 \to i_0 \ $
shows that $\ e(\alpha i_{k+1}) = k, \ $ therefore we conclude
that $\ e\alpha E=[0,L].\ $ \\

\noindent d)  $ \widehat{e}\widehat{\alpha} \widehat{E}=[0,L] \ $ since
$\ e\alpha E=[0,L].\ $  The desired properties follow directly from item a), Theorem \ref{g0}-b)-c),
and Theorem \ref{g8}-c).
\end{proof}

\

We left for future work the study of weak versions of $\ E$-bound systems that allow a small number of cycles to avoid $\ E, \ $ as well as weak versions of $\ S$-stability that allow a small number of microstates to enter and leave the equilibrium in a short interval of time.

\

\noindent ragadiaz@gmail.com \\
\noindent Departamento de Matem\'aticas\\
\noindent  Universidad Nacional de Colombia - Sede Medell\'in, Colombia

\end{document}